\documentclass[12pt, reqno, a4paper]{amsart}
\usepackage{amsmath,mathrsfs}
\usepackage{amssymb}
\usepackage{color}
\usepackage{calligra}
\usepackage{dsfont}
\usepackage{pstricks,pst-node}
\usepackage[latin1]{inputenc}

\newcommand\NoBlackBoxes{\global\overfullrule0pt}
\NoBlackBoxes

\newcommand{\N}{\mathbb{N}}
\newcommand{\OmegaSW}{{\Omega^\text{sw}}}
\newcommand{\OmegaP}{{\Omega_+}}
\newcommand{\OmegaM}{\Omega_-}

\makeatletter
\let\serieslogo@\relax
\let\@setcopyright\relax


\parindent0em
\textwidth15.2cm

\textheight25cm

\topmargin-1.2cm
\evensidemargin-0.3cm
\oddsidemargin-0.3cm
\parindent 0cm

\newtheorem{definition}{Definition}[section]
\newtheorem{theorem}[definition]{Theorem}
\newtheorem{lemma}[definition]{Lemma}

\newtheorem{corollary}[definition]{Corollary}

\newenvironment{remark}[1][Remark]{\begin{trivlist}
\item[\hskip \labelsep {\bfseries #1}]}{\end{trivlist}}

\renewcommand{\P}{{\mathbb{P}}}
\newcommand{\E}{{\mathbb{E}}}
\newcommand{\R}{{\mathbb{R}}}

\newcommand{\Gap}{\operatorname{Gap}}
\newcommand{\follows}{\Longrightarrow}
\newcommand{\Var}{\mathbb{V}}

\newcommand{\goesto}{\longrightarrow}
\newcommand{\Klow}{K_{\text{low}}}
\newcommand{\A}{\mathcal{A}}
\newcommand{\Pst}{P_{\text{st}}}

\newcommand{\sgn}{\mathrm{sgn}}
\newcommand{\tr}{\operatorname{Tr}}
\newcommand{\OmegaHat}{\widehat{\Omega}}
\newcommand{\Qhat}{\widehat{Q}}
\newcommand{\piHat}{\widehat{\pi}}
\renewcommand{\choose}[2]{\genfrac{(}{)}{0pt}{}{#1}{#2}}
\renewcommand{\S}{\mathcal{S}}
\newcommand{\Ccal}{\mathcal{C}}
\renewcommand{\epsilon}{\varepsilon}
\renewcommand{\phi}{\varphi}

\numberwithin{equation}{section}


\begin{document}

\setcounter{page}{1}

\title[The Swapping Algorithm on the Blume-Emery-Griffiths Model]{Mixing times for the Swapping Algorithm on the Blume-Emery-Griffiths Model}

\author[Mirko Ebbers]{Mirko Ebbers}
\address[Mirko Ebbers]{Fachbereich Mathematik und Informatik,
Universit\"at M\"unster,
Einsteinstra\ss e 62,
48149 M\"unster,
Germany}

\email[Mirko Ebbers]{mirkoebbers@uni-muenster.de}

\author[Holger Kn\"opfel]{Holger Kn\"opfel}
\address[Holger Kn\"opfel]{Fachbereich Mathematik und Informatik,
Universit\"at M\"unster,
Einsteinstra\ss e 62,
48149 M\"unster,
Germany}

\email[Holger Kn\"opfel]{holger.knoepfel@ruhr-uni-bochum.de}

\author[Matthias L\"owe]{Matthias L\"owe}
\address[Matthias L\"owe]{Fachbereich Mathematik und Informatik,
Universit\"at M\"unster,
Einsteinstra\ss e 62,
48149 M\"unster,
Germany}

\email[Matthias L\"owe]{maloewe@math.uni-muenster.de}

\author[Franck Vermet]{Franck Vermet}
\address[Franck Vermet]{Laboratoire de Math\'ematiques de Bretagne Atlantique, UMR CNRS 6205, Universit\'e de Bretagne Occidentale,  6, avenue Victor Le Gorgeu\\
CS 93837\\
F-29238 BREST Cedex 3\\
France}

\email[Franck Vermet]{Franck.Vermet@univ-brest.fr}


\date{\today}

\subjclass[2000]{Primary: 60 J 10 Secondary: 60 K 35}

\keywords{Swapping Algorithm; Simulated Tempering; Metropolis Algorithm; Markov Chain Monte Carlo
methods; Blume-Emery-Griffiths model; statistical mechanics}

\newcommand{\wlim}{\mathop{\hbox{\rm w-lim}}}
\newcommand{\na}{{\mathbb N}}
\newcommand{\re}{{\mathbb R}}

\newcommand{\vep}{\varepsilon}

\begin{abstract}
We analyze the so called Swapping Algorithm, a parallel version of the well-known Metropolis-Hastings algorithm,
on the mean-field version of the Blume-Emery-Griffiths model in statistical mechanics. This model has two parameters  and depending on their choice, the model exhibits either a first, or a second order phase transition. In agreement with a conjecture by Bhatnagar and Randall we find that the Swapping Algorithm mixes rapidly in presence of a second order phase transition, while becoming slow when the phase transition is first order.
\end{abstract}

\maketitle
\nocite{ZhengDissertation,BhatnagarRandallTorpidMixingPotts,EbbersLoweREM,EllisOttoTouchetteBEG,WoodardSchmidlerHuberRapidMixing,WoodardSchmidlerHuberTorpidMixing,MadrasZhengCW,GoreJerrumSW}
\section{Introduction}
Simulation methods are important tools in applied mathematics, e.g. in Bayesian statistics, computational physics, econometrics, or computational biology.
Markov Chain Monte Carlo (MCMC, for short) methods on the other hands belong to the most popular simulation techniques. They sample an unknown distribution, rely on the ergodic theorem for Markov chains, and construct a Markov chain on a finite state space that converges to the desired distribution. The first question is, of course, whether such a Markov chain exists. This is answered in the affirmative by the Metropolis-Hastings chain: Given an irreducible, aperiodic Markov chain (the base chain) on
the underlying state space, the Metropolis-Hastings algorithm allows to sample from a Markov chain with {\it any} given invariant distribution with full support. The idea of the Metropolis-Hastings algorithm, to always accept states with a higher probability than the current state and to accept states that are less likely with a probability equal to the ratio of the probability of the new state and the probability of the current state, is borrowed from the Glauber dynamics in statistical physics. In situations where the measure we want to sample from is a Gibbs distribution, as is often the case in statistical mechanics, the operation of comparing two probabilities can be performed quickly, i.e. with a small number of steps.

Like the Glauber dynamics the Metropolis-Hastings algorithm usually converges slowly, when the target distribution is multi-modal, i.e. when there are states that are locally very likely but globally not optimal.
Such situations occur e.g. in statistical physics in the presence of a phase transition and the slow convergence of the Glauber dynamics to the equilibrium distribution there is known under name of ''metastability''.

Several modifications of the Metropolis-Hastings algorithm have been proposed to circumvent this problem and speed up the convergence. Among them the so-called Swapping Algorithm (see \cite{GeyerMCMCmaximumLikelihood}), {also called Metropolis-coupled Markov chains or}  Parallel Tempering (see \cite{orlandini}), and the Simulated Tempering Algorithm (see \cite{marinari_parisi}, \cite{GeyerThompsonAMCMC}, and \cite{madras}) are very popular in applications, in particular on models from statistical physics.
In many situations they seem indeed to be able to improve the convergence of the Metropolis chain. However, the theoretical results about these
algorithms are rather limited: Madras and Zheng \cite{MadrasZhengCW} were able to show that the Swapping chain converges quickly for the Curie--Weiss model (among others). On the other hand, relying on results from Zheng's Ph.D. thesis (\cite {ZhengDissertation}), Bhatnagar and Randall \cite{BhatnagarRandallTorpidMixingPotts} prove that both, the Swapping Algorithm and Simulated Tempering, are slowly mixing for the 3-state Potts model and conjecture that this is caused by the first order phase transition in the Potts model (while the phase transition in the Curie-Weiss model is of second order). The techniques of these two papers were generalized to a couple of interesting situations by Huber, Schmidler, and Woodard, see \cite{WoodardSchmidlerHuberRapidMixing} and \cite{WoodardSchmidlerHuberTorpidMixing}.
A first rapid convergence result for the Swapping Algorithm in a disordered situation was proved by L\"owe and Vermet in \cite{loewe_vermet_swap}. Ebbers and L\"owe \cite{EbbersLoweREM} show that in disordered models the conjecture by Bhatnagar and Randall is not correct. They prove that the Swapping Algorithm mixes slowly on the Random Energy Model, even though this model has only a third order phase transition. This, however, may actually be a true disorder phenomenon, since in the theory of spin glasses, free energies are usually smoothed by taking expectations over the disorder.

The aim of the current paper therefore is to analyze the conjecture of Bhatnagar and Randall in another ordered model. A very appropriate scenario for this purpose is the mean-field version of the so called Blume-Emery-Griffiths (BEG, for short) model. This model
resembles a Curie-Weiss model with three states, $\pm 1$ and $0$. However, unlike  in the Potts model, the state $0$
plays  a particular role. The BEG
model has been studied extensively as a model of many diverse systems, in particular $He^3-He^4$ mixtures. A fact that makes it particularly interesting for our purposes is that, for different parameter values, it exhibits both, a discontinuous first-order phase transition and a continuous second order phase transition. This behavior has been conjectured for quite some time in the physics literature, but only recently was rigorously shown to be true in a paper by Ellis et al. \cite{EllisOttoTouchetteBEG}. One reason, why the mean-field version of the BEG model is mathematically challenging, is based on the fact, that even though the energy functions depends on a two dimensional parameter, the coordinates of this parameter are not independent. Other results on the BEG model were obtained by Ellis et al. in subsequent papers (\cite{Ellis2}, \cite{Ellis3}, \cite{Ellis4}), where the mean-field BEG model was referred to as mean-field Blume-Capel model. The Glauber dynamics for this model was studied in a
very recent paper by Kovchegov, Otto and Titus \cite{Otto}. They show that the mixing times of the Glauber dynamics undergoes a transition from rapid to slow mixing depending on the parameter values; the fascinating aspect of this result is, that the mixing time transition coincides with the equilibrium phase transition in the regime of the second order continuous phase transition but differs in the regime of the first-order discontinuous phase transition of the BEG model.

In the present paper, we consider the Swapping and Simulated Tempering Algorithms for the BEG model in regimes where the model is multimodal and confirm the conjecture by Bhatnagar and Randall in so far, that we are able to show rapid convergence (i.e. convergence in polynomial time in the system size) and torpid mixing (i.e. convergence in exponential time) depending on whether there is a second or a first order phase transition in the model.

As mentioned before, Woodard, Schmidler and Huber \cite{WoodardSchmidlerHuberRapidMixing} were able to give the first known result of rapid mixing of the Swapping Algorithm in a general, non model-specific, setting, in particular also to situations where the target distribution has more than one mode. We note that their result are so general, that they cannot be used in the case of rapid mixing in the BEG model. The technique used by Woodard, Schmidler and Huber relies heavily on a static, non temperature-dependent, partitioning of the state space. The underlying Metropolis chain needs to mix rapidly in each part, for any temperature, in order for their technique to work. Furthermore, the probability of each part must not get too small, as the temperature is decreased. In the rapid mixing case of the BEG model, this partitioning cannot be achieved. Our proof relies on a dynamic, temperature dependent, partitioning in which one part gets very unlikely as the temperature is decreased. More precisely, the  partitioning necessary for proofing rapid mixing as stated in Theorem \ref{Klarge} below
is given in our formula \eqref{Temperatur_dependant_partitioning_1} which uses the division of the state space for every temperature introduced in \eqref{Temperatur_dependant_partitioning_2} through \eqref{Temperatur_dependant_partitioning_3}. The necessity arises as there is no temperature independent partitioning such that the Metropolis chain itself is rapidly mixing for every partition and for every temperature. Additionally as defined in \eqref{Temperatur_dependant_partitioning_4} below it is even necessary to switch temperature dependently from one partition to two partitions (per total magnetization direction) at the critical temperature. This technique is indeed tailored for the bimodal situation of the BEG.

We organize the paper in the following way: The second section introduces the Swapping Algorithm (based on the Metropolis-Hastings chain) formally. At the same time we also introduce the Tempering Algorithm, which is itself uninteresting for applications in statistical mechanics, but provides a chain, that can be compared to the Swapping Algorithm, in particular when both algorithms are slowly mixing. {In Section 3 we introduce the  mean-field BEG model.  We propose a way to rewrite this model,   present a theorem  on the free energy  which  is a refinement of some results given in \cite{EllisOttoTouchetteBEG}, and is necessary for our analysis of the Swapping Algorithm. Then we give  our results  on the Swapping and Tempering Algorithms -- a characterization of the parameter regimes where these Algorithms converge rapidly or slowly, respectively. These results are proved in Section 4 and 5, respectively. The proofs use methods to bound the spectral gaps of Markov chains such as coupling methods or
Poincar\'e inequalities. In the appendices, we cite those bounds we need in the proofs. Moreover,
we prove a result on the speed of convergence of a coloring algorithm on a graph and our results on the free energy in the BEG model. These lemmata turn out to be useful in the proofs of our results in Section 4 and 5.

\section{Simulated Tempering and Swapping}
In this section we introduce two variants of the Metropolis-Hastings Algorithm. These algorithms include an additional change of temperature with the idea to speed up the Metropolis chain, when it is slow. They are specifically tailored for situations, where the invariant measure is a Gibbs measure with respect to some energy function and the Metropolis Algorithm mixes slowly at low temperatures, but quickly at high temperatures. We start with the Simulated Tempering Algorithm proposed by {Marinari and Parisi \cite{marinari_parisi}.}

\subsection{Simulated Tempering}
From now on and for the rest of the paper let us assume that the target distribution is a Gibbs measure on a finite set $\Omega$. To be more specific,
 let $H(\cdot)$ denote an energy function or Hamiltonian of the system. For every inverse temperature $\beta >0$, the probability function on $\Omega$ given by
\begin{equation}
 \pi_\beta(\sigma):=\frac{e^{\beta H(\sigma)}}{\sum_{\sigma'\in\Omega}e^{\beta H(\sigma')}}=\frac{e^{\beta H(\sigma)}}{Z(\beta)}\label{gibbs-measure}
\end{equation}
is called a Gibbs measure. Note that the sign of our energy function differs from the conventional choice in statistical mechanics. For the sake of this paper we will be concerned with simulating such Gibbs measures.

Let $\mathcal{K}_{gen}$ denote an aperiodic, symmetric and irreducible Markov chain on $\Omega$, the so-called base chain, and $T_\beta(\cdot, \cdot)$ the corresponding Metropolis-Hastings chain for $\pi_\beta$ defined by
\begin{equation} \label{Definition_Metropolis}
T_\beta(x,y)=\left\{\begin{array}{ll}
\mathcal{K}_{gen}(x,y) & \mbox{if } x\neq y \mbox{ and } H(y) \ge H(x) \\
\mathcal{K}_{gen}(x,y) \frac{\pi_\beta(y)}{\pi_\beta(x)} & \mbox{if } x\neq y \mbox{ and } H(y) < H(x) \\
1-\sum_{z\neq x} T_\beta(x,z) & \mbox{otherwise.}
\end{array}
\right.
\end{equation}

{For Gibbs measures on a finite set with some sort of neighborhood structure, one commonly chooses $\mathcal{K}_{gen}$ as a local random walk kernel.}
This algorithm, despite of being natural, is sometimes slow in natural situations, e.g. when sampling from the low temperature distribution of the Curie-Weiss model (see e.g. \cite{MadrasPiccioni}).
To speed up its convergence, we consider
$\Omega \times \{0,1,...,M\}$ for some $M\in\N$. In the case of  Gibbs measures on a set $\Omega= {\mathcal S}^N$, where $\mathcal S$ is some set with more than one element and $N$ is a large natural numvber, $M$  will be typically chosen as $M:=c_1 N$ for some constant $c_1>0$. The second component of the new state space refers to the current temperature of the model (or the chain, resp.). Define
\begin{equation}
\beta_i:=\frac{i}{M}\beta \mbox{  and the probability measures } \pi_i:=\pi_{\beta_i}.
\end{equation}
As probability measure on $\Omega\times\{0,...,M\}$ we take
\begin{equation}
\pi(x)=\pi((\sigma,i))=\frac{1}{M+1}\pi_i(\sigma).
 \end{equation}
 We construct a Markov chain that starts in $(\sigma,i)\in\Omega\times\{0,1,...,M\}$ and chooses a new state $(\sigma',i)$ according to $T_{\beta_i}$. In a second step the temperature is changed according to a similar Metropolis chain. The idea is, that in case the chain is in an energy-valley, it can increase its temperature (reduce $\beta$) and thereby reduce the cost of switching to another energy-valley. Explicitly, this works as follows:

In the first step let $i\in\{0,...,M\}$ be fixed. Then a transition from $(\sigma,i)$ to $(\sigma',i)$ has probability $P_{st}((\sigma,i),(\sigma',i)):= T_{\beta_i}(\sigma,\sigma')$.
In the second step let $\sigma\in\Omega$ be fixed. Then the chain moves from $(\sigma,i)$ to $(\sigma,j)$ according to the transition probabilities
\begin{displaymath}
Q((\sigma,i),(\sigma,j)):=\left\{\begin{array}{l l}
                 K_{\text{tm}}(i,j) & \text{if } \pi_j(\sigma)\geq \pi_i(\sigma) \text{ and } i\neq j\\
         K_{\text{tm}}(i,j)\frac{\pi_j(\sigma)}{\pi_i(\sigma)} & \text{if } \pi_j(\sigma) < \pi_i(\sigma)\\
         1-\sum\limits_{k\neq i} Q((\sigma,i),(\sigma,k)) & \text{if } i=j\\
                \end{array}\right.
\end{displaymath}
with
\begin{displaymath}
 K_{\text{tm}}(i,j):=\left\{\begin{array}{l l}
                             \frac{1}{2(M+1)} & \text{if } j=i\pm 1 \mbox{ and } j \in \{0,\ldots, M\}\\
                 0 & \text{if } |i-j|>1\\
                 1-\sum\limits_{k\neq i} K_{\text{tm}}(i,k) & \text{if } i=j.\\
                            \end{array}\right.
\end{displaymath}

The actual Simulated Tempering Algorithm now consists of first applying a temperature move $Q$, then a Metropolis move at the present temperature (the transition matrix of which is denoted by $T$), and finally another temperature move. Hence, in terms of transition matrices the Simulated Tempering algorithm is given by
$ QP_{st}Q.$

Notice that the computation of $\frac{\pi_j(\sigma)}{\pi_i(\sigma)}$ in the matrix $Q$ needs knowledge of the normalizing constants $Z(\beta_i)$ and $Z(\beta_j)$ which in most cases is hard to obtain. This is the reason for introducing the following Swapping Algorithm.

\subsection{Swapping}
The so called Swapping Algorithm was suggested by Geyer in \cite{GeyerMCMCmaximumLikelihood}. The basic idea of changing the temperature is maintained. As state space for the
Swapping chain we choose:
$$\OmegaSW:=\Omega^{M+1}$$
A natural choice for a probability measure on $\OmegaSW$ is:
\begin{equation}
\pi(x):=\prod\limits_{i=0}^{M} \pi_i(x_i)=\frac{\prod\limits_{i=0}^{M} e^{\frac{i\beta}{M} H(x_i)}}{\prod\limits_{i=0}^{M} Z(\beta_i)}
\end{equation}
with $x=(x_0,...,x_M)\in\OmegaSW$. As in the Simulated Tempering Algorithm the Swapping Algorithm consists of two steps. In the first step, we choose an $i\in\{0,...,M\}$ uniformly and update the $i$-th component of the current state $x=(x_0,...,x_M)$ according to the usual Metropolis chain $T_{\beta_i}$ at inverse temperature $\beta_i$. In the second step we choose an $i\in\{0,...,M-1\}$ uniformly at random and swap the components $x_i$ and $x_{i+1}$ of $x$ with probability
$$\min\left(1,\frac{\pi(x_0,...,x_{i+1},x_i,...,x_M)}{\pi(x_0,...,x_i,x_{i+1},...,x_M)}\right).$$

So explicitly the first step works as follows: The transition probabilities from $x=(x_0,...,x_{i-1},x_i,x_{i+1},...,x_M)\in\OmegaSW$ to $x'=(x_0,...,x_{i-1},x_i',x_{i+1},...,x_M)$ are $T_i(x,x'):=T_{\beta_i}(x_i, x_i')$.
For any  $u, v$, let $\delta(u,v)=1$ if $u=v$ and $0$ otherwise.
Then the product chain
\begin{multline}\label{defP}
 P(x,y)=\frac{1}{2}\delta(x,y)+\frac{1}{2(M+1)}\sum\limits_{i=0}^{M} \delta(x_0,y_0)\cdot \dots \cdot \delta(x_{i-1},y_{i-1})T_i(x_i,y_i)\\ \times \delta(x_{i+1},y_{i+1})\cdot ... \cdot \delta(x_M,y_M)
\end{multline}
gives us a Markov chain on $\OmegaSW$. Also note that we never change more than one component at a time.
The second step is the temperature swap. Here the transition probabilities from $x=(x_0,...,x_i,x_{i+1},...,x_M)$ to $x'=(x_0,...,x_{i+1},$ $x_i,...,x_M)$ are
\newcommand{\Ksw}{K_\text{sw}}
\begin{displaymath}
Q(x,x'):=\left\{\begin{array}{l l}
                 \Ksw(x,x') & \text{if } \pi(x')\geq \pi(x) \text{ and } x\neq x'\\
         \Ksw(x,x')\frac{\pi(x')}{\pi(x)} & \text{if } \pi(x') < \pi(x)\\
         1-\sum\limits_{z\neq x} Q(x,z) & \text{if } x=x'.\\
                \end{array}\right.
\end{displaymath}
$\Ksw$ is defined by
\begin{displaymath}
 \Ksw(x,x'):=\left\{\begin{array}{l l}
                     \frac{1}{2M} & \text{if } \exists i \text{ with } x_j=x_j' \text{ } \forall j\notin\{i,i+1\}, \\&\text{and } x_i=x_{i+1}', x_{i+1}=x_i'\\
             0 & \text{if } \nexists i \text{ with } x_j=x_j' \text{ } \forall j\notin\{i,i+1\},\\&\text{and } x_i=x_{i+1}', x_{i+1}=x_i'\\
             1-\sum\limits_{z\neq x} \Ksw(x,z) & \text{if } x=x'\\
                     \end{array}\right..
\end{displaymath}
Note that the factor $\frac{1}{2}$ in the definition of $\Ksw$ and $P$ guarantees that both, $P$ and $Q$, are aperiodic and that the corresponding operators are positive. Notice that all the normalizing constants in $Q$ and $P$ cancel out, such that the transition probabilities can be effectively computed.

The Swapping Algorithm is now any reasonable combinations of $P$ and $Q$, usually one takes $QPQ$ as it is reversible with respect to $\pi$ if $Q$ and $P$ are reversible (which in our situation is the case). The following theorem gives an idea, how the speed of convergence of swapping and tempering depend on each other.
\begin{theorem}[Zheng \cite{Zheng2003}]\label{vergleich}
If there exists a constant $\delta>0$ such that
$$\sum\limits_{x\in\Omega} \min\{\pi_i(x),\pi_{i+1}(x)\}\geq \delta\quad \text{for all } 1\leq i\leq M,$$
then if the Swapping Algorithm converges in polynomial time, so does the Simulated Tempering Algorithm.
\end{theorem}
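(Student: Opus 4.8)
The plan is to compare the two algorithms via a direct comparison of their Dirichlet forms, using the standard fact that polynomial-time convergence of a reversible chain is equivalent (up to polynomial factors in the relevant parameters, here $|\Omega|$ and $M$) to a polynomially bounded spectral gap. The Swapping chain lives on $\OmegaSW = \Omega^{M+1}$ with product measure $\pi_{\mathrm{sw}}$, while the Tempering chain lives on $\Omega \times \{0,\dots,M\}$ with measure $\pi_{\mathrm{st}}$; the link between them is the natural projection $\Phi: \OmegaSW \to \Omega \times \{0,\dots,M\}$ that, after picking the ``active'' coordinate, reads off $(x_i, i)$. The key observation is that under $\pi_{\mathrm{sw}}$, conditioning on the $i$-th coordinate being $x$ leaves the remaining coordinates distributed as an independent product over $\pi_j$, $j\neq i$, so the image of $\pi_{\mathrm{sw}}$ under the appropriate averaging is exactly $\pi_{\mathrm{st}}$. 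This is the mechanism by which a swap move in the large chain ``simulates'' a temperature move in the small chain.

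First I would set up the comparison inequality for Dirichlet forms: for any test function $g$ on $\Omega\times\{0,\dots,M\}$, lift it to $f = g\circ\Phi$ on $\OmegaSW$ (more precisely, $f(x) = \frac{1}{M+1}\sum_i g(x_i,i)$, or work coordinate-by-coordinate), and show that the Dirichlet form of the Tempering chain evaluated at $g$ is bounded by a constant times the Dirichlet form of the Swapping chain evaluated at $f$, while the variances satisfy $\Var_{\pi_{\mathrm{st}}}(g) \le C\,\Var_{\pi_{\mathrm{sw}}}(f)$ (or the reverse, depending on which direction the variance bound needs to go — one must be careful here). The Metropolis part $P_{st}$ of Tempering at fixed temperature $i$ is literally the same local move as the $i$-th coordinate update in the Swapping chain $P$, so that part of the Dirichlet form transfers with no loss beyond the $\frac{1}{M+1}$ bookkeeping factor. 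The temperature-swap part $Q$ of the Swapping chain, restricted to swapping coordinates $i$ and $i+1$, needs to be shown to dominate (a constant multiple of) the temperature move $Q$ of the Tempering chain; this is exactly where the hypothesis $\sum_x \min\{\pi_i(x),\pi_{i+1}(x)\} \ge \delta$ enters, since that sum is precisely the ``overlap mass'' controlling how efficiently a swap of adjacent temperatures can be carried out, and $\delta$ appears as a multiplicative constant in the resulting bound.

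The main obstacle will be handling the variance comparison and the combinatorial structure of the swap move correctly. A swap in $\OmegaSW$ exchanges two full $\Omega$-valued coordinates, whereas a temperature move in Tempering changes only the index $i$; one must check that when the ``spectator'' coordinates are integrated out against their product measure, the acceptance ratio for the full swap reduces (after cancellation) to the single-site ratio $\pi_{i+1}(x)/\pi_i(x)$ that governs the Tempering move, and that the $\delta$-hypothesis controls the discrepancy uniformly over the spectator configuration. I would carry this out by writing the Swapping Dirichlet form as a sum over $i$ and over pairs $(x,x')$ differing by a transposition of coordinates $i,i+1$, then grouping terms with the same image under $\Phi$ and bounding each group from below by the corresponding Tempering transition weight times $\delta$ (times a universal constant absorbing the $\frac12$ factors and the $\frac{1}{2M}$ versus $\frac{1}{2(M+1)}$ normalizations). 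Combining the Metropolis-part and swap-part bounds yields $c\,\delta \cdot \mathrm{Gap}(\text{Swapping}) \le \mathrm{Gap}(\text{Tempering})$ for an explicit constant $c$ depending only polynomially on $M$, which gives the claimed implication. One should also double-check that the reversibility of $QPQ$ (noted in the text) is genuinely used so that the spectral-gap characterization of mixing applies on both sides.
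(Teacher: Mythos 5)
The paper does not prove this theorem itself; it is stated and cited verbatim from Zheng \cite{Zheng2003}, so there is no ``paper proof'' to compare against line by line. Evaluating your sketch on its own merits, the high-level choice of a Dirichlet form comparison is indeed Zheng's tool, and your identification of the overlap sum $\sum_x\min\{\pi_i(x),\pi_{i+1}(x)\}$ as the quantity controlling the discrepancy between the tempering acceptance ratio $\pi_{i+1}(\sigma)/\pi_i(\sigma)$ and the averaged-out swap acceptance ratio is the correct intuition. However, there are two concrete gaps in the execution.

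First, the Dirichlet-form inequality you write runs in the wrong direction. To conclude $\Gap(QP_{st}Q)\ge c\,\Gap(QPQ)$ from the Poincar\'e inequality $\mathcal{E}_{SW}(f)\ge \Gap(QPQ)\Var_{SW}(f)$, you must for each test function $g$ on $\Omega\times\{0,\dots,M\}$ construct an $f$ on the swapping space with $\mathcal{E}_{ST}(g)\ge c_1\,\mathcal{E}_{SW}(f)$ and $\Var_{SW}(f)\ge c_2\,\Var_{ST}(g)$; then chain the three inequalities. You instead assert $\mathcal{E}_{ST}(g)\le C\,\mathcal{E}_{SW}(f)$, which gives an upper bound on the ratio $\mathcal{E}_{ST}(g)/\mathcal{E}_{SW}(f)$ and leads nowhere.

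Second, the lift $f(x)=\frac{1}{M+1}\sum_i g(x_i,i)$ cannot satisfy the needed variance bound. Take $g(\sigma,i)=c_i$ depending only on the temperature index. Then $f$ is identically $\frac{1}{M+1}\sum_i c_i$, so $\Var_{SW}(f)=0$, while $\Var_{ST}(g)=\frac{1}{M+1}\sum_i c_i^2-\bigl(\frac{1}{M+1}\sum_i c_i\bigr)^2$ is in general strictly positive; the inequality $\Var_{ST}(g)\le C\,\Var_{SW}(f)$ fails. The averaging over $i$ destroys exactly the variance coming from the temperature coordinate, which is the variance the temperature-swap moves must dissipate. The fix, and the real content of Zheng's argument, is to work on an \emph{augmented} state space $\Omega^{M+1}\times\{0,\dots,M\}$ carrying a tracked temperature label $j$, define $f(x,j)=g(x_j,j)$ (so that under $\pi_{\mathrm{sw}}\otimes\mathrm{Unif}$ one gets $\Var(f)=\Var_{ST}(g)$ exactly), and couple the label to the swaps so that a swap of coordinates $i$ and $i+1$ together with the label moving from $i$ to $i+1$ reproduces the tempering move $(\sigma,i)\to(\sigma,i+1)$. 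Without this label-tracking construction, one cannot match the swap move, which fixes the index $i$ and permutes the $\Omega$-components, against the tempering move, which fixes $\sigma$ and changes $i$; and it is precisely when one marginalizes the spectator coordinate out of the coupled move that the $\delta$-overlap appears as a multiplicative constant. So your outline has the right ingredients but is missing the augmentation step that makes the comparison actually go through, and has one inequality reversed.
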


\section{Results}
\label{section_Results}
Now we introduce the mean field Blume-Emery-Griffiths (BEG) model. For a given $K>0$ the Hamilton function on $\Omega=\{-1,0,1\}^{N}$ is given by
\begin{equation}\label{BEG_Hamiltonian}
 H(\sigma)=H_K(\sigma):=- \sum_{j=1}^{N} \sigma_j^2 + \frac{K}{N}\left(\sum_{j=1}^{N}\sigma_j\right)^2
\end{equation}
for $\sigma\in\Omega$. Here a state $\sigma$ is said to have spin $\sigma_i$ in coordinate $i$. Therefore the Gibbs measure of the BEG model, which we want to sample from, is
\begin{equation}\label{BEGpi}
 \pi_\beta(\sigma)=\frac{e^{\beta H(\sigma)}}{Z(\beta)}=\frac{e^{\beta H(\sigma)}}{\sum_{\sigma'}e^{\beta H(\sigma')} }=
 \frac{e^{\beta (- \sum_{j=1}^{N} \sigma_j^2 + \frac{K}{N}(\sum_{j=1}^{N}\sigma_j)^2)}}{\sum_{\sigma'}e^{\beta H(\sigma')}}
\end{equation}
with $Z(\beta)$ being the normalization constant.
We see, that in the mean-field BEG model, the energy function solely depends on the parameters $\sum_{j=1}^{N} \sigma_j^2$, and $(\sum_{j=1}^{N} \sigma_j)^2,$ the last one being the term of interactions between spins. It can therefore be expected, that the mean-field BEG can be rigorously analyzed. However, as the two parameters are strongly dependent, the analysis is not easy. It was not until the paper by Ellis et al. \cite{EllisOttoTouchetteBEG}, that one obtained a thorough understanding of
the macroscopic behavior of the mean field BEG model. In a nutshell their result coincides with an intuitive understanding of the model. If $K$ is large enough, the second term becomes dominant and the model behaves like the Curie-Weiss model (see \cite{Elli1985} for an analysis of the latter model): it has a second order phase transition at some critical temperature $\beta_c^{(2)}(K)$. When $K$ becomes smaller, this phase transition however is of first order, the low temperature macro-states emerge discontinuously from the high-temperature macro-state. If $K$ is eventually too small, there is no phase transition at all.

We will first do some system specific preparations, in order to get more familiar with the model.
To simplify notation define the functions
\begin{align}
 S_N(\sigma)&=\sum_{i=1}^{N} \sigma_i\\
 R_N(\sigma)&=\sum_{i=1}^{N} \sigma_i^2
\end{align}
where $S_N$ gives the total magnetization, and $R_N$ the total number of non-zero spins of the state $\sigma$. Using this notation we define
\begin{equation}
\A_{s,r}:=\big\{\sigma\in\Omega\big|S_N(\sigma)=s, R_N(\sigma)=r\big\}\label{A_sr}
\end{equation}
as the set of states with a fixed number of $0$s and fixed magnetization. As we consider the mean-field BEG model, all states in $\A_{s,r}$ are basically indistinguishable in the system. We will later (see Theorem \ref{Theorem_T^2_fast_on_A_sr} below) see, that the Metropolis chain $T^2$ restricted to $\A_{s,r}$ mixes rapidly for any combination of $s$ and $r$.

In order to be able to better address non-negligible differences in the state space consider
\begin{equation}
 \Upsilon=\Upsilon_N:=\big\{\textbf{a}=(a_{-1},a_0,a_1)\in\R^3\big|a_i\geq 0\ \forall i, \sum_i a_i=1, Na_i\in\N\ \forall i=-1,0,1\big\}
\end{equation}
such that
\begin{equation}\label{Equation_BEG_IdeaByGoreAndJerrum}
 \Omega=\bigcup_{\textbf{a}\in\Upsilon} \Big\{\sigma\in\Omega\Big|\sum_{j=1}^{N} \delta(\sigma_j,i)=Na_i\ \forall i\in\{-1,0,1\}\Big\}
\end{equation}
is a disjoint union. Note that all states in one of the sets on the right hand side of \eqref{Equation_BEG_IdeaByGoreAndJerrum} only differ by an index permutation and thereby have the same energy. This is inspired by Gore's and Jerrum's work on the Potts Model \cite{GoreJerrumSW} as the following calculation makes the state space easier to handle.

Considering
\begin{align}
 \pi_\beta(\sigma\text{ has type }N\textbf{a})&=\choose{N}{Na_{-1},Na_0,Na_1} Z(\beta)^{-1} e^{-\beta\big(Na_{-1}+Na_1-\frac{K}{N}(Na_1-Na_{-1})^2\big)}\notag\\
	&=\choose{N}{Na_{-1},Na_0,Na_1} Z(\beta)^{-1} e^{-N\beta\big(a_{-1}+a_1-K(a_1-a_{-1})^2\big)}\label{Eq4.6}
\end{align}
and using Stirling's approximation one obtains
\begin{align}
 \pi_\beta(\sigma\text{ has type }N\textbf{a})&=Z(\beta)^{-1} N^{-1} e^{-N\big(\sum_i a_i\log a_i\big) + \Delta(\textbf{a})} e^{-N\beta\big(a_{-1}+a_1-K(a_1-a_{-1})^2\big)}\notag\\
&=Z(\beta)^{-1} N^{-1} e^{N\big(\beta\big(-a_{-1}-a_1+K(a_1-a_{-1})^2\big)- \sum_i a_i\log a_i\big) + \Delta(\textbf{a})}\label{EnergyLandscape_Inducing-P_f}
\end{align}
with $|\Delta(\textbf{a})|=O(1)$ if there exists an $\epsilon>0$ with $a_i\geq\epsilon$ for all $i\in\{-1,0,1\}$. So understanding
\begin{equation}\label{EnergyLandscape}
 f_\beta(\textbf{a}):=\beta\big(-a_{-1}-a_1+K(a_1-a_{-1})^2\big)-\sum_i a_i\log a_i
\end{equation}
will give us a better insight in how the BEG model behaves as a function of $\beta$.
{
First, we prove the following result in the appendix:

\begin{theorem}\label{Theorem_Holger}
$f_\beta$ has at most three local maxima on $\Upsilon_\infty:=\{(a_{-1}, a_0, a_1)\in \R_+^3: \sum_{i=-1}^1 a_i=1\}$. There are no further maxima on the boundary of $\Upsilon_\infty$.
\end{theorem}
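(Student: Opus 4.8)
The plan is to collapse the two-dimensional problem on $\Upsilon_\infty$ to a one-dimensional one by a partial maximisation, and then to count the local maxima of the resulting one-variable function via the associated mean-field self-consistency equation.

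\emph{Step 1: the boundary.} On $\partial\Upsilon_\infty$ some coordinate vanishes, say $a_i=0$. Since $-a_i\log a_i$ has inner directional derivative $-\log a_i-1\to+\infty$ as $a_i\downarrow 0$ while $f_\beta$ is otherwise smooth there, $f_\beta$ strictly increases as one moves into the interior. Hence no local maximum lies on the boundary, which is the second assertion. It remains to bound the number of interior local maxima.

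\emph{Step 2: reduction to one dimension.} In the interior write $s=a_1-a_{-1}$ and $r=a_1+a_{-1}=1-a_0$, so $a_{\pm1}=(r\pm s)/2$ and
\begin{equation*}
f_\beta=\beta(-r+Ks^2)-(1-r)\log(1-r)-\tfrac{r+s}{2}\log\tfrac{r+s}{2}-\tfrac{r-s}{2}\log\tfrac{r-s}{2},
\end{equation*}
which is even in $s$. A direct computation gives $\partial_r^2 f_\beta=-\tfrac{1}{1-r}-\tfrac{1}{2(r+s)}-\tfrac{1}{2(r-s)}<0$, and $\partial_r f_\beta\to+\infty$ as $r\downarrow|s|$, $\partial_r f_\beta\to-\infty$ as $r\uparrow1$. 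Hence for each $s\in(-1,1)$ there is a unique maximiser $r^{*}(s)\in(|s|,1)$, smooth in $s$ by the implicit function theorem, and I set $F(s):=f_\beta(r^{*}(s),s)=\max_r f_\beta(r,s)$, an even $C^\infty$ function on $(-1,1)$. Since $F'(s)=\partial_s f_\beta(r^{*}(s),s)$ (the $r$-derivative vanishing at $r=r^{*}(s)$), critical points of $F$ correspond to interior critical points of $f_\beta$; and because $f_\beta(r,s)\le F(s)$ with equality along the graph $r=r^{*}(s)$, and $s\mapsto(r^{*}(s),s)$ is continuous, $s_0$ is a local maximum of $F$ iff $(r^{*}(s_0),s_0)$ is a local maximum of $f_\beta$. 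So it suffices to show $F$ has at most three local maxima.

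\emph{Step 3: the self-consistency equation and the counting.} At an interior critical point $\nabla f_\beta$ is proportional to $(1,1,1)$, and solving these equations gives $a_{\pm1}=a_0e^{-\beta\pm2\beta Ks}$, hence $a_0=(1+2e^{-\beta}\cosh(2\beta Ks))^{-1}$ and
\begin{equation*}
s=g(s):=\frac{2e^{-\beta}\sinh(2\beta Ks)}{1+2e^{-\beta}\cosh(2\beta Ks)};
\end{equation*}
conversely every solution yields a genuine interior critical point, so the critical points of $F$ are exactly the solutions of $s=g(s)$. With $u=2\beta Ks$ and $c=2e^{-\beta}$ these are the zeros of $\psi(u):=\tfrac{c\sinh u}{1+c\cosh u}-\tfrac{u}{2\beta K}$, and one computes
\begin{equation*}
\psi''(u)=\frac{c\sinh u\,(1-c\cosh u-2c^2)}{(1+c\cosh u)^3}.
\end{equation*}
Since $u\mapsto 1-c\cosh u-2c^2$ is strictly decreasing on $(0,\infty)$, $\psi''$ changes sign at most once there; hence $\psi'$ has at most two zeros on $(0,\infty)$, so by Rolle $\psi$ has at most two zeros on $(0,\infty)$, and by oddness at most five zeros in all. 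Thus $F$ has at most five critical points in $(-1,1)$. Finally, from $\partial_s f_\beta=2\beta Ks+\tfrac12\log\tfrac{r-s}{r+s}$ and $r^{*}(s)\to1$ as $s\to\pm1$ one gets $F'(s)\to+\infty$ as $s\downarrow-1$ and $F'(s)\to-\infty$ as $s\uparrow1$, while $F'$ is odd. A smooth function on $(-1,1)$ with at most five zeros that runs from $+\infty$ to $-\infty$ has at most three $(+\to-)$ sign changes (if $a$ and $b$ denote the numbers of $(+\to-)$ and $(-\to+)$ changes then $a=b+1$ and $a+b\le5$, so $a\le3$); every local maximum of $F$ is such a change, so $F$, and hence $f_\beta$ on the interior, has at most three local maxima. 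Together with Step 1 this proves the theorem.

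I expect the main obstacle to be the sign analysis of $\psi''$ in Step 3 — equivalently, establishing that the self-consistency map $g$ meets the diagonal at most twice on each half-line $(0,\infty)$ and $(-\infty,0)$; the reduction lemma of Step 2 and the terminal counting argument are then essentially routine, the only care needed being the behaviour of $r^{*}(s)$ and $F'$ near $s=\pm1$. (One may note in passing that the $s\mapsto-s$ symmetry forces the surviving maxima to be $0$ and a conjugate pair $\pm s_{\max}$, but this refinement is not needed for the bound.)
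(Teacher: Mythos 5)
Your proof is correct and takes a genuinely different route from the paper's. The paper changes coordinates to $(r,t)=(a_{-1}/(a_{-1}+a_1),\,a_{-1}+a_1)$, works directly with the two-variable critical-point system for $F=-f_\beta\circ T^{-1}$, computes the Hessian determinant, and then reduces the critical equations to a scalar equation $h(r)=\phi(r)$ on $(\tfrac12,1)$; the key counting step shows $h'=\phi'$ is a quadratic in $w=\sqrt{r(1-r)}$, so by Rolle there are at most two solutions of $h=\phi$ off the symmetric branch, and a further case analysis (according to the sign of $4\beta K-2-e^\beta$) pins this down to at most one per half-line, yielding at most three critical points outright. You instead use the symmetric/antisymmetric coordinates $(r,s)=(a_1+a_{-1},\,a_1-a_{-1})$, observe that $f_\beta$ is strictly concave in $r$ at fixed $s$, and collapse to a one-variable function $F(s)=\max_r f_\beta(r,s)$; the critical-point condition then becomes the classical mean-field self-consistency equation $s=g(s)$, whose zero count you control by the sign pattern of $\psi''$ rather than by a discriminant condition. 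You end up with the weaker bound of at most five critical points, but the parity of sign changes of $F'$ (which runs from $+\infty$ to $-\infty$) converts this into the required bound of three local maxima without any case analysis. What each buys: the paper's argument is sharper on the number of critical points and in fact classifies them (it determines when $(r_0,t_0)$ is or is not a minimum), which is implicitly used elsewhere in the paper, e.g.\ in the proof of Lemma~\ref{Lemma_monotonc-behaviour-of-A_g}; your argument is shorter, avoids the Hessian and the $4\beta K\lessgtr 2+e^\beta$ cases entirely, and makes the mean-field structure transparent, at the price of not pinning down the exact critical-point count. Both hinge on the same mechanism (a quadratic/once-sign-changing expression in a monotone variable, plus Rolle), so neither is fundamentally more elementary; one small thing worth making explicit in your write-up is that the conclusion ``at most two zeros of $\psi$ on $(0,\infty)$'' uses $\psi(0)=0$ as an extra Rolle node, not merely the bound on zeros of $\psi'$.
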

}


Moreover, in \cite{EllisOttoTouchetteBEG} (Sections 3 and 5) one finds a complete description of the set ${\mathcal E}_{\beta,K}$ of  the maxima of $f_\beta$ on $\Upsilon_\infty$, i.e. the set of canonical equilibrium macro-states of the model, for all $\beta$ and $K$. We will adopt the notation of  \cite{EllisOttoTouchetteBEG} for the critical values of the parameters $\beta$ and $K$: there exists a critical value  $\beta_c= \log 4$, such that ${\mathcal E}_{\beta,K}$  has two different forms for $0< \beta \leq \beta_c$ and for $\beta >\beta_c$. More precisely, for $0< \beta \leq \beta_c$, there exists a critical value $K^{(2)}_c(\beta)=\frac 1{4\beta e^{-\beta}}+ \frac 1{2\beta}$, such that ${\mathcal E}_{\beta,K}$ is unimodal for $0< K< K^{(2)}_c(\beta)$, and bimodal for $K> K^{(2)}_c(\beta)$. Moreover, ${\mathcal E}_{\beta,K}$ exhibits a continuous bifurcation at $K^{(2)}_c(\beta)$, which corresponds to a second order phase transition.

For $\beta > \beta_c$, there exists a critical value $K^{(1)}_c(\beta)$ such that ${\mathcal E}_{\beta,K}$ is unimodal for $0< K< K^{(1)}_c(\beta)$, trimodal for $K= K^{(1)}_c(\beta)$ and bimodal for $K> K^{(1)}_c(\beta)$. Moreover, ${\mathcal E}_{\beta,K}$ exhibits a discontinuous bifurcation at $K^{(1)}_c(\beta)$, which corresponds to a first-order phase transition. The quantity $K^{(1)}_c(\beta)$ is defined  implicitly  in  \cite{EllisOttoTouchetteBEG}, but an explicit form is not obtained. This is consistent with the general challenge in analyzing first-order, discontinuous phase transitions in statistical physics models. As a consequence, to study the behavior of  $K^{(1)}_c(\beta)$  as ${\beta\rightarrow + \infty}$ is not trivial.  We prove in the appendix \ref{existenceKlow} that  this limit  exists, and Ellis et al.  \cite{EllisOttoTouchetteBEG} indicate that numerical simulations lead to the conjecture that $\Klow :=\lim_{\beta\rightarrow + \infty} K^{(1)}_c(\beta)$ is equal to 1.

A slight difficulty of the above discussion is also that the conventional picture of statistical mechanics where one studies a model depending on temperature is turned upside down: The critical parameters are defined as function of $\beta$ and not the other way round.

In Section 5 of \cite{EllisOttoTouchetteBEG}  the authors extrapolate these results obtained by fixing $\beta$ and varying $K$ to results about the phase transition behavior of the canonical equilibrium macro-states for fixed $K$ and varying $\beta$.
We define the tricritical value $K_c= K^{(2)}_c(\beta_c)\simeq 1.0820$.
Then for $K> K_c$, there exists a value $\beta_c^{(2)}(K)$ such that ${\mathcal E}_{\beta,K}$ exhibits a second order phase transition at $\beta =\beta_c^{(2)}(K)$:  there exists a $\delta>0$, such that ${\mathcal E}_{\beta,K}$ exhibits a single phase for $\beta \in (\beta_c^{(2)}(K) - \delta, \beta_c^{(2)}(K)]$ and two distinct phases for $(\beta_c^{(2)}(K), \beta_c^{(2)}(K)+ \delta)$.
 And for $\Klow<K< K_c$ (we precise in Corollary \ref{inverse} why we need the condition $K>\Klow$), there exists a value $\beta_c^{(1)}(K)$ such that ${\mathcal E}_{\beta,K}$ exhibits a first-order phase transition at $\beta =\beta_c^{(1)}(K)$:  there exists a $\delta>0$, such that ${\mathcal E}_{\beta,K}$ exhibits a single phase for $\beta \in (\beta_c^{(1)}(K) - \delta, \beta_c^{(1)}(K))$, three distinct phase at $\beta = \beta_c^{(1)} (K)$, and two distinct phases for $(\beta_c^{(1)}(K), \beta_c^{(1)}(K)+ \delta)$.

\vspace{2mm}
These properties imply in particular that  the  Metropolis algorithm is torpidly mixing for the BEG model, for the values  of $ (\beta,K)$ such that the model is multimodal,  if the base chain is a local random walk kernel.
In fact,
we know that $\pi_\beta(\sigma\text{ has type }N\textbf{a})$ has exponential structure. We also know that for suitable $K, f_\beta$ has at least two  modes for sufficiently (depending on $K$) large $\beta$. Take $\textbf{a}$ to represent one of the   maximum point. If we define $B_\epsilon(\textbf{a})$ as the ball of radius $\epsilon$ centered in $\textbf{a}$ in the appropriate metric space, this leads to $B_\epsilon(\textbf{a})$ having exponential little conductance, therefore representing a bad cut in the state space. For more details see our Section \ref{Subsection-Case_Klow<K<Kc} where this technique is used in the more complicated setup of swapping.

\vspace{2mm}
In the present paper,  we will consider the  Simulated  Tempering Algorithm  and  the Swapping Algorithm, which are defined in Section 2, for  values of  $(\beta, K)$ such that the Metropolis algorithm is torpidly mixing for the BEG model. We will focus on two regions of the parameters  $(\beta, K)$ where we show the influence of the order of the phase transition on the speed of convergence of both algorithms.
For the Simulated  Tempering Algorithm  and  the Swapping Algorithm, the corresponding Metropolis-Hastings chain for the measure $\pi_{\beta}$, defined in  (\ref{BEGpi}), is given by (\ref{Definition_Metropolis}), with the proposal chain
$$\mathcal{K}_{gen}(x,y)= \frac 1 {4N},$$ if $x,y\in\{-1,0,1\}^N$ and differ in exactly one spin $x_i\ne y_i,$ for some $i\in\{1,...,N\}$, and $\mathcal{K}_{gen}(x,x)= \frac 12$. In all other cases define
$$\mathcal{K}_{gen}(x,y)= 0.$$

The BEG Model, as Ellis et al. \cite{EllisOttoTouchetteBEG} show, exhibits different phase behavior depending on $K$. For small $K<\Klow$ there is, for every temperature, only one macro state, which implies that there is no phase transition.

The first regime we want to look at is $\Klow<K<K_c$ with $\Klow :=\lim_{\beta\rightarrow + \infty} K^{(1)}_c(\beta)$ and $K_c=K(\log 4)$ as in \cite[Eq. (3.19)]{EllisOttoTouchetteBEG}. The model exhibits a discontinuous phase transition at a $\beta_c^{(1)}(K)$ depending on $K$. We will use this discontinuity in the phase to show

\begin{theorem}\label{Kintermediate}
 Consider the BEG model with $\Klow< K< K_c$. Then for $\beta>\beta_c^{(1)}(K)$, the Simulated Tempering Algorithm is torpidly mixing, since
$$\Gap(Q P_{st} Q)\leq e^{-cN}$$
holds for $c>0$ as constructed in Theorem \ref{Metropolis_Low_Conductance}.
\end{theorem}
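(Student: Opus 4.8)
The plan is to produce an exponentially bad cut for the reversible chain $QP_{st}Q$ and to conclude via the standard variational bound $\Gap(QP_{st}Q)\le \langle(I-QP_{st}Q)\ind_A,\ind_A\rangle_\pi/\mathrm{Var}_\pi(\ind_A)$ applied to the indicator of a suitable set $A$ with $\pi(A)\le\tfrac12$; since $P$ and $Q$ carry holding probability $\tfrac12$, $QP_{st}Q$ is a nonnegative self-adjoint operator on $L^2(\pi)$, so its spectral gap is genuinely $1-\lambda_2$ and this bound applies. The cut will be a slab in the total magnetization, taken uniformly across all temperature levels: with $s_0:=\lfloor\rho N\rfloor$ for a density $\rho>0$ to be chosen, set $A_+:=\{(\sigma,i):S_N(\sigma)\ge s_0\}$ and $A_-:=\{(\sigma,i):S_N(\sigma)\le-s_0\}$. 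The spin-flip symmetry $\sigma\mapsto-\sigma$ leaves $H_K$, hence every $\pi_{\beta_i}$ and $\pi$, invariant and swaps $A_+$ with $A_-$, so $\pi(A_+)=\pi(A_-)\le\tfrac12$ automatically.

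The heart of the proof -- and the step I expect to be the main obstacle -- is to choose $\rho$ so that the density $\rho$ is ``forbidden at every temperature up to $\beta$'': concretely, so that there is a constant $g>0$ with $\max_{\textbf{a}} f_{\beta'}(\textbf{a})-\sup\{f_{\beta'}(\textbf{a}):a_1-a_{-1}=\rho\}\ge g$ for all $\beta'\in(0,\beta]$, where $f_{\beta'}$ is the free-energy function \eqref{EnergyLandscape}. This is exactly where the first-order nature of the transition, and the hypothesis $\Klow<K<K_c$ which places us in that regime, enters: because the bifurcation at $\beta_c^{(1)}(K)$ is \emph{discontinuous}, the secondary local maximum of $f_{\beta'}$ that overtakes the origin at $\beta_c^{(1)}(K)$ is already present at the corresponding spinodal inverse temperature $\beta'<\beta_c^{(1)}(K)$ and is located at a magnetization density bounded away from $0$; hence any sufficiently small $\rho>0$ is never a local maximizer of $f_{\beta'}$ along $\{a_1-a_{-1}=\rho\}$ for any $\beta'\le\beta$. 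I would obtain the uniform gap $g$ by a compactness argument in $\beta'$, treating $\beta'\to0$ (where $f_{\beta'}$ is essentially the strictly concave entropy, maximized at zero magnetization) and a neighborhood of $\beta_c^{(1)}(K)$ separately, leaning on Theorem~\ref{Theorem_Holger} (which forces all maximizers of $f_{\beta'}$ into the interior of $\Upsilon_\infty$), on the description of $\mathcal E_{\beta,K}$ in \cite{EllisOttoTouchetteBEG}, and on the free-energy results proved in the appendix. Indeed Theorem~\ref{Metropolis_Low_Conductance} already constructs such a $\rho$ and the constant $c$ at the bottom level $\beta$, and the present situation only additionally requires the analogous, easier estimate at higher temperatures, where $\pi_{\beta'}$ concentrates near magnetization $0$ and the value $\rho N$ is even more strongly suppressed.

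Granting this, the remaining steps are routine. First, combining \eqref{EnergyLandscape_Inducing-P_f} with the entropy bound $\choose{N}{Na_{-1},Na_0,Na_1}\le e^{-N\sum_i a_i\log a_i}$ and the lower bound $Z(\beta')\ge \tfrac{c}{N}\,e^{N\max_{\textbf{a}}f_{\beta'}(\textbf{a})}$ (valid since the maximizer is interior) gives, after absorbing polynomial factors, $\pi_{\beta'}(S_N=s)\le e^{-cN}$ for every $\beta'\le\beta$ and every $s$ with $|s-\rho N|\le 2$, for some $c\in(0,g)$. Second, $\pi(A_+)$ is bounded below by a positive constant: since $\beta>\beta_c^{(1)}(K)$ and $M=\Theta(N)$, a fixed positive fraction of the levels $i$ have $\beta_i>\beta_c^{(1)}(K)$, and for each such level the large-deviation lower bound from \eqref{EnergyLandscape_Inducing-P_f} gives $\pi_{\beta_i}(S_N\ge s_0)\ge\text{const}$, because the positive equilibrium magnetization of the bimodal phase exceeds $\rho$. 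Third, in one application of $QP_{st}Q$ only the middle factor $P_{st}$ alters $\sigma$, and it flips a single spin, so $S_N$ changes by at most $2$; thus only states with $S_N\in\{s_0,s_0+1\}$ can pass from $A_+$ to $A_+^{c}$, whence
\[
\sum_{x\in A_+,\,y\notin A_+}\pi(x)(QP_{st}Q)(x,y)\;\le\;\frac{1}{M+1}\sum_{i=0}^{M}\big[\pi_{\beta_i}(S_N=s_0)+\pi_{\beta_i}(S_N=s_0+1)\big]\;\le\;2e^{-cN},
\]
using the first estimate for each $\beta_i\le\beta$. Dividing by $\mathrm{Var}_\pi(\ind_{A_+})=\pi(A_+)\pi(A_+^c)\ge\tfrac12\pi(A_+)\ge\text{const}$ yields $\Gap(QP_{st}Q)\le e^{-cN}$ after renaming the constant, which is the asserted bound.
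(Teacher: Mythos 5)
Your argument is correct and rests on exactly the same analytic ingredient as the paper, namely the uniform-in-temperature free-energy gap at a fixed small magnetization density, which is the content of Theorem~\ref{Metropolis_Low_Conductance} (the ratio bound there immediately gives the absolute bound $\pi_{\beta'}(\mathcal{N}_{\text{edge}})\le e^{-cN}$ you need, since $\pi_{\beta'}(\mathcal{N})\le 1$). The differences are in the choice of cut and the bookkeeping. The paper takes the set $\mathcal{S}=\{(x,i):x\in\mathcal{N}\}$ built from the \emph{central stripe} $\mathcal{N}=\{|S_N|\le N\epsilon\}$, and then must cope with the fact that $\pi(\mathcal{S})$ is close to $1$ at the high-temperature levels, so $\pi(\mathcal{S})\le 1/2$ fails; this forces a small extension of the Jerrum--Sinclair conductance lemma to sets with $\pi(\mathcal{S})\le q<1$ together with an explicit estimate showing $q<1$. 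You instead cut along one \emph{wing}, $A_+=\{S_N\ge s_0\}$, and the spin-flip symmetry $\sigma\mapsto-\sigma$ gives $\pi(A_+)=\pi(A_-)\le\tfrac12$ for free, so the variational bound $\Gap\le\mathcal{E}(\ind_{A_+},\ind_{A_+})/\mathrm{Var}(\ind_{A_+})$ can be applied without any extension of the conductance lemma. You also need the extra (but easy, and correctly supplied) observation that $\pi(A_+)$ is bounded \emph{below} by a constant, which the paper does not need for its cut; this is where $\beta>\beta_c^{(1)}(K)$, $M=\Theta(N)$, and the positivity of the low-temperature equilibrium magnetization enter. The one place you rightly flag as the real work --- the uniform gap $g>0$ between the global maximum of $f_{\beta'}$ and its value on the slice $\{a_1-a_{-1}=\rho\}$ for all $\beta'\in(0,\beta]$ --- is precisely what the paper delegates to Theorem~\ref{Metropolis_Low_Conductance}, whose proof invokes the discontinuity of the bifurcation (to get $\epsilon$, hence $\rho$, uniform in $\beta'$) and Theorem~\ref{Theorem_Holger}; your sketch of that step via the spinodal picture and a compactness argument in $\beta'$ is consistent with, and essentially a restatement of, what the paper does there.
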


We prove this theorem in Section \ref{Subsection-Case_Klow<K<Kc}.
\begin{corollary}
 This implies torpid mixing of the Swapping Algorithm in this regime.
\end{corollary}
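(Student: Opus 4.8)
The plan is to obtain the Corollary from Zheng's comparison theorem (Theorem \ref{vergleich}). That theorem has a single hypothesis, a lower bound $\sum_{x\in\Omega}\min\{\pi_i(x),\pi_{i+1}(x)\}\ge\delta$ on the overlaps of consecutive tilted measures, uniform in $i$, so the first step is to verify this for the BEG model in the regime $\Klow<K<K_c$, $\beta>\beta_c^{(1)}(K)$, with the scaling $M=c_1N$. Since $H=H_K$ satisfies $-N\le H(\sigma)\le KN$ on $\Omega=\{-1,0,1\}^N$ and consecutive inverse temperatures differ only by $\beta_{i+1}-\beta_i=\beta/M=\beta/(c_1N)$, one has
\[
\frac{\pi_i(x)}{\pi_{i+1}(x)}=e^{(\beta_i-\beta_{i+1})H(x)}\,\frac{Z(\beta_{i+1})}{Z(\beta_i)}\in[e^{-C},e^{C}]
\]
for a constant $C=C(\beta,K,c_1)$ independent of $N$; the partition-function ratio is controlled the same way, via $\tfrac{d}{d\beta}\log Z(\beta)=\langle H\rangle_\beta\in[-N,KN]$. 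Hence $\min\{\pi_i(x),\pi_{i+1}(x)\}\ge e^{-C}\pi_{i+1}(x)$, and summing over $x\in\Omega$ gives the hypothesis with $\delta:=e^{-C}>0$, uniformly in $N$ and $i$.

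With the hypothesis in hand, Theorem \ref{vergleich} applies. Its contrapositive already yields the qualitative statement: since, by Theorem \ref{Kintermediate}, the Simulated Tempering Algorithm does not converge in polynomial time for $\beta>\beta_c^{(1)}(K)$, neither does the Swapping Algorithm $QPQ$. To upgrade this to the quantitative bound $\Gap(QPQ)\le e^{-c'N}$ that ``torpid mixing'' refers to, I would use the quantitative content behind the proof of Theorem \ref{vergleich}: that proof compares Dirichlet forms and in fact produces an inequality of the type $\Gap(QP_{st}Q)\ge \frac{\psi(\delta)}{\mathrm{poly}(M)}\,\Gap(QPQ)$, i.e. the tempering gap is bounded below by a polynomial-in-$N$ multiple of the swapping gap (this polynomial dependence is exactly what makes ``polynomial implies polynomial'' go through). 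Combining with Theorem \ref{Kintermediate},
\[
\Gap(QPQ)\le \frac{\mathrm{poly}(N)}{\psi(\delta)}\,\Gap(QP_{st}Q)\le \mathrm{poly}(N)\,e^{-cN}\le e^{-c'N}
\]
for any $c'<c$ and all $N$ large, which is the claim.

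An alternative route, independent of the fine structure of Theorem \ref{vergleich}, is to transfer the bad cut directly to $\OmegaSW=\Omega^{M+1}$: lifting one of the exponentially unlikely regions $B_\epsilon(\mathbf{a})$ around a non-dominant local maximum of $f_\beta$ to a product set over the low-temperature coordinates (exactly as is done for the swapping chain in Section \ref{Subsection-Case_Klow<K<Kc}), one checks that its $\pi$-measure stays bounded away from $0$ while both the $P$-flow and the $Q$-flow across its boundary are exponentially small, so a conductance (Cheeger-type) bound of the kind cited in the appendix gives $\Gap(QPQ)\le e^{-c'N}$ outright. In either approach the only genuine work is the bookkeeping behind the overlap constant $\delta$ and, for the first approach, making sure that the loss in the Dirichlet-form comparison is polynomial and not exponential in $N$; because $M=c_1N$ grows with the system size, controlling this loss is the step I expect to require the most care.
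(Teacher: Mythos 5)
Your proposal is correct and follows exactly the route the paper intends: you verify the overlap hypothesis of Zheng's comparison theorem (Theorem \ref{vergleich}) using $|H|\le \max(1,K)N$ and the temperature spacing $\beta/M=\beta/(c_1N)$ to bound $\pi_i(x)/\pi_{i+1}(x)$ by constants uniform in $N$ and $i$, and then apply the contrapositive together with Theorem \ref{Kintermediate}. The paper leaves this implicit but it is precisely the argument; your remark that a specific exponential bound requires opening up the Dirichlet-form comparison inside Theorem \ref{vergleich}, and your alternative of lifting the bad cut $\mathcal{N}$ directly to $\OmegaSW$ and bounding the conductance of the swapping chain there, are both sound supplementary observations.
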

For $K>K_c$ the model shows a continuous phase transition at $\beta_c^{(2)}(K)$ which will lead to a Swapping chain which behaves  like a Curie-Weiss model's Swapping chain which Madras and Zheng already considered in \cite{MadrasZhengCW}.
{However, the technique used by Madras and Zheng relies  on a static, non temperature-dependent, partitioning of the state space. The underlying Metropolis chain needs to mix rapidly in each part, for any temperature. In the rapid mixing case of the BEG model, this partitioning cannot be achieved. Our proof relies on a dynamic, temperature dependent, partitioning in which one part gets very unlikely as the temperature is decreased. For the BEG model, the proof becomes much more involved, but we can use ideas of  Madras and Zheng  \cite{MadrasZhengCW} and (a corrected version of ideas in) Bhatnagar and Randall \cite{BhatnagarRandallTorpidMixingPotts}  to get}

\begin{theorem}\label{Klarge}
 For $K>K_c$  and $\beta>\beta_c^{(2)}(K)$, the Swapping chain with its transition kernel $QPQ$ for the BEG model is rapidly mixing, since
$$\Gap(QPQ)\geq \frac{1}{p(N)}$$
for some polynomial $p$ of $N$.
\end{theorem}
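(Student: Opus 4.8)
\emph{Plan of proof.} The plan is to bound $\Gap(QPQ)$ from below by the state-space decomposition method for spectral gaps recalled in the appendix, applied to the temperature-dependent partition \eqref{Temperatur_dependant_partitioning_1} of $\OmegaSW=\Omega^{M+1}$, whose blocks we write $\Omega_k$. Recall that $QPQ$ is reversible with respect to $\pi$, that $M=c_1N$, and that consecutive inverse temperatures differ by $\beta_{i+1}-\beta_i=\beta/M=O(1/N)$; by \eqref{Eq4.6} the $\beta$-dependence of each $\pi_i$ enters only through a bounded exponent $e^{-N\beta_i(\cdots)}$, so consecutive Gibbs measures are within a constant factor and $\sum_{x\in\Omega}\min\{\pi_i(x),\pi_{i+1}(x)\}\ge\delta$ for all $i$ and some constant $\delta>0$ --- the overlap bound that makes the swaps $Q$ effective (it is also the hypothesis of Theorem \ref{vergleich}). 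The decomposition theorem then reduces the claim to two polynomial-in-$N$ estimates --- a bound $1/p_1(N)$ on $\Gap$ of $QPQ$ restricted to each block and a bound $1/p_2(N)$ on $\Gap$ of the induced projected chain $\bar P$ on the set of blocks --- since it gives $\Gap(QPQ)\ge c_0\,\Gap(\bar P)\cdot\min_k\Gap\big((QPQ)|_{\Omega_k}\big)\ge 1/p(N)$ for an absolute constant $c_0>0$ and a suitable polynomial $p$.

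\emph{Step 1: the per-temperature blocks.} Let $i^\ast$ be the largest index with $\beta_{i^\ast}\le\beta_c^{(2)}(K)$. For a ``hot'' slot $i\le i^\ast$, where by Theorem \ref{Theorem_Holger} and the classification of Ellis et al., $f_{\beta_i}$ of \eqref{EnergyLandscape} is unimodal, we keep $\Omega$ as a single block; for a ``cold'' slot $i>i^\ast$, where $f_{\beta_i}$ has two maxima lying at total magnetizations of opposite sign, we split $\Omega$ along $\{S_N=0\}$ into $\OmegaP$ and $\OmegaM$, the equator strip (a thin region in the $\textbf{a}$-coordinate, of exponentially small $\pi_i$-mass) being absorbed into one side or carried as a separate low-mass block; this is \eqref{Temperatur_dependant_partitioning_2}--\eqref{Temperatur_dependant_partitioning_4}. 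On any such block the conditional measure $\pi_i(\cdot\mid\text{block})$ is, in the macroscopic coordinate $\textbf{a}$, concentrated near a single maximum of $f_{\beta_i}$, and the Metropolis dynamics of that slot restricted to the block mixes in polynomial time: Theorem \ref{Theorem_T^2_fast_on_A_sr} gives rapid mixing inside each level set $\A_{s,r}$, and a Poincar\'e estimate for the induced macroscopic chain --- governed by the shape of $f_{\beta_i}$ near its maximum via Theorem \ref{Theorem_Holger} and the quantitative free-energy bounds of the appendix --- handles the motion of $\textbf{a}$; a ``mixing in the fibres plus mixing of the quotient'' comparison combines the two.

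\emph{Steps 2 and 3: restricted chains, projected chain, conclusion.} A block $\Omega_\eta$ of \eqref{Temperatur_dependant_partitioning_1} is prescribed by a sign assignment $\eta$ to the cold slots (cold slot $i$ confined to $\OmegaP$ or $\OmegaM$, hot slots free). Inside $\Omega_\eta$ one step of $QPQ$ never changes the sign of a cold slot; every slot's Metropolis component is rapidly mixing by Step 1; and a swap between two slots is either accepted with probability bounded below --- when the two configurations carry the same sign, using the overlap $\delta$ and the $O(1/N)$ temperature spacing --- or is forbidden, and in both cases it can only help. The product-chain and comparison lemmas of the appendix, applied as in the treatment of the Curie--Weiss swapping chain by Madras and Zheng \cite{MadrasZhengCW}, then give $\Gap\big((QPQ)|_{\Omega_\eta}\big)\ge 1/p_1(N)$ uniformly in $\eta$, the loss of a factor $M+1=c_1N+1$ from the randomized choice of updated slot being polynomial. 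For the projected chain, the sign of the warmest cold slot $i^\ast+1$ flips whenever that slot is swapped with the sign-free hot slot $i^\ast$, and the sign of a colder slot $i+1$ flips whenever it is swapped with slot $i$ while the two carry opposite signs; because the phase transition is of second order, near $\beta_c^{(2)}(K)$ the two maxima of $f_\beta$ emerge continuously from one another, so these sign-changing swaps are accepted with probability bounded below and sign information percolates down the temperature ladder. A canonical-paths (Poincar\'e) estimate for $\bar P$ --- the same argument as in the Curie--Weiss case, the exponentially small equator block contributing only a negligible correction --- then gives $\Gap(\bar P)\ge 1/p_2(N)$, and feeding Steps 1--3 into the decomposition theorem proves Theorem \ref{Klarge}.

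\emph{Main obstacle.} The crux is Step 1, and its propagation through Step 2, \emph{for the slots whose inverse temperature sits at or just above $\beta_c^{(2)}(K)$}: there the relevant maximum of $f_\beta$ is almost flat, so the macroscopic chain suffers critical slowing down and the polynomial degrades; the two incipient wells nearly coincide, so the boundary $\{S_N=0\}$ passes through a region of non-negligible mass; and it is exactly here --- as the authors stress in the introduction --- that no temperature-\emph{independent} partition can make every restricted Metropolis chain rapidly mixing, which is why the switch from one block to two at $i^\ast$ in \eqref{Temperatur_dependant_partitioning_1} is forced. Turning the qualitative statement of Theorem \ref{Theorem_Holger} into quantitative control of the macroscopic birth--death chain inside such a degenerate well, and then carrying these bounds through the swap couplings with matching polynomial rates uniformly in $N$ (and in $M\sim c_1N$), is the technically demanding part; a second, lesser nuisance is the consistent bookkeeping for the equator strip in both the restricted and the projected estimates.
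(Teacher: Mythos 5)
You have the right general machinery in view (Caracciolo--Pelissetto--Sokal decomposition, a block chain plus restricted chains, Poincar\'e and comparison inequalities, the hot/cold cutoff $i_c$ at $\beta_c^{(2)}(K)$), but there is a genuine gap in the partitioning: you have merged the paper's \emph{two} layers of decomposition into one and, in doing so, mis-identified what equations \eqref{Temperatur_dependant_partitioning_2}--\eqref{Temperatur_dependant_partitioning_4} split.

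Concretely, you describe the cold-slot split as $\Omega$ being cut along $\{S_N=0\}$ into $\OmegaP$ and $\OmegaM$, and assert that on each half ``the conditional measure $\pi_i(\cdot\mid\text{block})$ is concentrated near a single maximum of $f_{\beta_i}$,'' so the restricted Metropolis chain mixes fast. That is false. The $\OmegaP/\OmegaM$ split is only the \emph{first} layer of the paper's argument (Section 4.1, the partition by the number of $+$-signs, reducing by symmetry to $\OmegaP^M$). After that split, the restriction of $\pi_{\beta_i}$ to $\OmegaP$ at a cold slot $i>i_c$ is \emph{still bimodal}: besides the global mode at $a_{\max}(1)$ (positive magnetization) there remains a local mode at $a_{\max}(0)$, which sits on the equator $\{S_N=0\}\subseteq\OmegaP$. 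The set $\A_l$ around $a_{\max}(0)$ has exponentially small total mass, but it is a genuine local maximum of $f_{\beta_i}$ on $\mathcal B$, so the ridge separating $\A_l$ from $\A_g$ has relative conductance $e^{-cN}$; a chain started in $\A_l$ takes exponential time to escape. Hence $\Gap\big(P_+\big|_{\OmegaP}\big)$ is exponentially small, and the decomposition bound $\Gap(QPQ)\ge\Gap(\bar Q)\cdot\min_k\Gap\big((QPQ)|_{\Omega_k}\big)$ you invoke degenerates at exactly the place you believed you had unimodality. (Discarding $\A_l$ as a ``separate low-mass block'' does not help: the decomposition theorem takes a \emph{minimum} over blocks, so a slow block kills the bound regardless of its mass; you would also then lose the connectivity of the block graph and the escape route via the hot end of the ladder.)

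What the paper actually does is keep the $\OmegaP/\OmegaM$ partition as the \emph{outer} layer (Sections 4.1--4.2, giving $\Gap(\bar Q)\gtrsim M^{-2}$ after a Madras--Zheng-type comparison) and then, within $\OmegaP^M$, introduce a \emph{second}, temperature-dependent layer: the trace partition by $\A_g(\beta_i)$ versus $\A_l(\beta_i)$, which is precisely what \eqref{Temperatur_dependant_partitioning_2}--\eqref{Temperatur_dependant_partitioning_4} define --- a split \emph{inside} $\OmegaP$ by proximity to the global versus the local mode, not a split by sign. The monotonicity Lemma \ref{Lemma_monotonc-behaviour-of-A_g} and the Bhatnagar--Randall canonical-path argument then control the aggregated chain $\widehat Q$ on the trace space $\widehat\Omega$, while $\A_g$ and $\A_l$ individually are unimodal, so the restricted Metropolis chains are handled by the further $\A_{s,r}$ decomposition and the coloring-chain comparison (Theorems \ref{Theorem_T^2_fast_on_A_sr} and the $\Gap(\overline T)\ge N^{-5}/4$ bound). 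Without this second layer, the argument you sketch does not close --- and your own ``Main obstacle'' paragraph correctly senses the problem (the equator is not exponentially light near $\beta_c^{(2)}$, and critical slowing down near $i_c$) but attributes it to a quantitative degeneration of an otherwise sound step, when in fact the step as you've set it up fails outright for the coldest slots for a reason independent of the critical region.
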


We prove this theorem in Section \ref{Section_Proofs}.

\begin{remark}
Giving an explicit bound would need a longer argument in the end of the proof of Theorem \ref{Rapid_mixing_of_RW1} which does not give a better insight of the situation. As we do not believe our technique to give a sharp bound anyway,  we refrain from doing this extra step and do not give a suitable polynomial explicitly.
\end{remark}

\begin{corollary}
 This implies rapid mixing of the Simulated Tempering chain $Q\Pst Q$ in this regime.
\end{corollary}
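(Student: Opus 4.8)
The plan is to derive the corollary directly from Zheng's comparison theorem (Theorem \ref{vergleich}) together with Theorem \ref{Klarge}. First I would restate Theorem \ref{Klarge} in the language required by Theorem \ref{vergleich}: the bound $\Gap(QPQ)\ge 1/p(N)$ means that the Swapping chain converges in polynomial time, since the mixing time of a reversible chain is controlled, up to constants, by $\Gap^{-1}\log(1/\pi_{\min})$, and here $\pi_{\min}\ge e^{-cN}$ on $\OmegaSW$ so that $\log(1/\pi_{\min})=O(N)$. It then remains only to verify the hypothesis of Theorem \ref{vergleich}, namely a lower bound $\sum_{x\in\Omega}\min\{\pi_i(x),\pi_{i+1}(x)\}\ge\delta$ that is uniform in $i$ and in $N$, after which Theorem \ref{vergleich} yields that $Q\Pst Q$ also converges in polynomial time.

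To check the overlap bound I would exploit that the BEG energy scales linearly in $N$ while the temperature grid is fine. Recall $\beta_i=\frac iM\beta$ with $M=c_1N$, so $|\beta_{i+1}-\beta_i|=\beta/(c_1N)$, and that $H(\sigma)=-R_N(\sigma)+\frac KN S_N(\sigma)^2$ obeys $|H(\sigma)|\le(1+K)N$ for every $\sigma\in\Omega=\{-1,0,1\}^N$. Writing
$$\frac{\pi_{i+1}(\sigma)}{\pi_i(\sigma)}=e^{(\beta_{i+1}-\beta_i)H(\sigma)}\,\frac{Z(\beta_i)}{Z(\beta_{i+1})},$$
the exponent is bounded in absolute value by $c_2:=\frac{\beta}{c_1}(1+K)$, uniformly in $i$ and $N$, and the same termwise bound inside the ratio of partition functions gives $Z(\beta_i)/Z(\beta_{i+1})\in[e^{-c_2},e^{c_2}]$. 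Hence $\pi_{i+1}(\sigma)\ge e^{-2c_2}\pi_i(\sigma)$ for every $\sigma$, and therefore
$$\sum_{\sigma\in\Omega}\min\{\pi_i(\sigma),\pi_{i+1}(\sigma)\}\ \ge\ e^{-2c_2}\sum_{\sigma\in\Omega}\pi_i(\sigma)\ =\ e^{-2c_2}\ =:\ \delta\ >\ 0,$$
which is the required bound, independent of $i$ and $N$. Applying Theorem \ref{vergleich} then completes the argument.

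I do not expect any genuine obstacle here; this corollary is essentially bookkeeping. The only points needing a little care are the standard passage between a polynomial lower bound on the spectral gap and polynomial mixing time (which relies on $\log(1/\pi_{\min})=O(N)$), and ensuring that the overlap estimate is genuinely uniform in $N$ — which it is, precisely because the energy is of order $N$ while neighbouring inverse temperatures differ by order $1/N$, so two adjacent Gibbs measures have mutually bounded densities with constants free of $N$.
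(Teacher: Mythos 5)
Your proof is correct and follows the paper's intended route: the corollary is stated without proof, but Theorem \ref{vergleich} together with a uniform overlap bound is exactly what is meant, and your verification of $\sum_\sigma\min\{\pi_i(\sigma),\pi_{i+1}(\sigma)\}\ge e^{-2c_2}$ using $|H|\le(1+K)N$ and $\beta_{i+1}-\beta_i=\beta/(c_1N)$ is the natural argument. One minor slip: since $\pi$ on $\OmegaSW=\Omega^{M+1}$ is a product over $M+1\sim c_1N$ components, $\log(1/\pi_{\min})$ is of order $N^2$ rather than $N$, but this is still polynomial in $N$ and does not affect the conclusion.
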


\section{Proof of Theorem \ref{Klarge} }\label{Section_Proofs}

\subsection{General partitioning of the state space in the case of $K>K_c$}
We will begin to show Theorem \ref{Klarge} by partitioning the state space
\begin{equation}\Omega=\{-1,0,1\}^N=\OmegaP\cup\OmegaM\end{equation}
into two disjoint almost equally large parts
\begin{align*}
\OmegaP&=\Big\{\sigma\in\Omega\Big|\sum_i \sigma_i> 0\Big\}\cup\{(0,...,0)\}\\
    &\quad\cup\Big\{\sigma\neq (0,...,0)\Big| \sum_i \sigma_i =0, \text{ with the first non-zero coordinate =+1}\Big\}\\
\OmegaM&=\{\sigma\in\Omega|\sum \sigma_i< 0\}\\
    &\quad\cup\Big\{\sigma\neq (0,...,0)\Big| \sum_i \sigma_i =0, \text{ with the first non-zero coordinate =-1}\Big\}.
\end{align*}
Using this partitioning we will decompose $\OmegaSW=\Omega^{M+1}$ in the same way as Madras and Zheng in \cite[Section 4, Step two]{MadrasZhengCW}.

Let $\widetilde\OmegaSW:=\{+,-\}^M$ and take $x\in\OmegaSW$. Define the signature of $x$ by
\begin{equation}\begin{array}{r c c c l}
                 \sgn&:&\OmegaSW&\to&\widetilde\OmegaSW\\
		&&x&\mapsto&v
                \end{array}\end{equation}
with
\begin{equation} v_i=\begin{cases}
                      	+&\text{if }x_{i+1}\in\OmegaP\\
			-&\text{if }x_{i+1}\in\OmegaM,
                     \end{cases}\end{equation}
such that $\sgn(x)$ contains the sign, of the total magnetization of each component of $x$ except of the component for $\beta=0$. The first component of $x$ will have a special role, which will become apparent within the next paragraphs.

We will decompose the state space using the number of $+$-signs in $\sgn(x)$. For fixed $k\in\{0,...,M\}$ define
\begin{equation}
 \widetilde\Omega_k:=\{v\in\widetilde\OmegaSW|v \text{ has exactly } k\ +\text{-signs}\}.
\end{equation}
and note, that
$$\OmegaSW=\bigcup_{k=0}^{M} \Omega_k$$
is a disjoint union of
\begin{equation}\Omega_k:=\{x\in\OmegaSW|\sgn(x)\in\widetilde\Omega_k\}.\label{Definition-Omega-k}\end{equation}
Define $\overline Q$ to be the aggregated transition matrix and $(QPQ)|_{\Omega_k}$ to be the restriction of the chain $QPQ$ to the set $\Omega_k$ as defined in Theorem \ref{CaraccioloPelissettoSokal} for this decomposition.
Using Lemma \ref{MadrasZhengLemma7} and Theorem \ref{CaraccioloPelissettoSokal} we get
\begin{equation}
 \Gap(QPQ)\geq\Gap(Q^{\frac12}(QPQ)Q^{\frac12})\geq \Gap(\overline Q)\cdot \min_{k\in\{0,...,M\}} \Gap((QPQ)|_{\Omega_k}).
\end{equation}
Citing \cite[Sec. 4, step three]{MadrasZhengCW}, we can do all displayed calculations in our setting as well, which eventually leads to
\begin{equation}
 \Gap(Q^{\frac12}(QPQ)Q^{\frac12})\geq \frac18 \Gap(\overline Q)\cdot \min_{k\in\{0,...,M\}} \Gap((Q_k P_k Q_k))\label{Equation_QPQ-RestrictedTo_k}
\end{equation}
with $P_k$ and $Q_k$ being the restrictions of $P$ and $Q$ to $\Omega_k$, respectively (for a definition see Theorem \ref{CaraccioloPelissettoSokal} in the appendix).

The transition kernel $\overline Q$ is, in this setting, responsible for changing the number of components in $x\in\OmegaSW$ which are in $\OmegaP$ and $\OmegaM$, respectively. $\overline Q$ is essentially a one dimensional nearest neighbor random walk on $\{0,...,M\}$ whose spectral gap is well understood. Due to the symmetry in the model it does not (noticeably) matter for the chain, whether we restrict a given component $k$ of $x$ to be in $\OmegaP$ or $\OmegaM$.
This leads to
\begin{equation}
 \Gap((Q_k P_k Q_k))\approx\Gap((Q_{k'} P_{k'} Q_{k'}))\quad \forall k,k'\in\{0,...,M\}
\end{equation}
where $\approx$ means that both spectral gaps are of the same (polynomial or exponential) order.
This in turn implies $\min_{k\in\{0,...,M\}} \Gap((Q_k P_k Q_k)) \approx \Gap((Q_M P_M Q_M))$. We will write this as
\begin{equation} \min_{k\in\{0,...,M\}} \Gap((Q_k P_k Q_k))\approx\Gap((Q_M P_M Q_M))=\Gap((Q P_+ Q)),\end{equation}
where by abuse of notation, $Q_M$ is denoted by $Q$ and $P_M$ by $P_+$.
Note also that all arguments of the proof work in exactly the same way for any $k\in\{0,...,M\}$. The only difference is, which part of the state space we look at, for a given temperature $\beta_i$. The quantities
$\Gap(\overline Q)$ and $\Gap(Q P_+ Q)$ will be bounded below in the following subsections \ref{Speed1} and \ref{Speed2}.

\subsection{Speed of convergence of $\overline Q$}\label{Speed1}
Following in principle the proof given in \cite[Section 5]{MadrasZhengCW} (also see \cite[Section 2.5]{ZhengDissertation} for more details) we gain
\begin{lemma}
The spectral gap of the aggregated chain $\overline Q$ satisfies
$$\Gap(\overline Q)\geq \frac{1}{4 M^2} e^{-\beta(K+1)\frac{N}{M}}.$$
\end{lemma}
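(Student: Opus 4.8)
The plan is to identify $\overline Q$ as a heavily lazy, uniformly slowed-down Ehrenfest chain on $\{0,\dots,M\}$ and to transfer the classical Ehrenfest spectral gap to it, the slowdown being precisely the factor $e^{-\beta(K+1)N/M}$. We follow the scheme of \cite[\S5]{MadrasZhengCW} and \cite[\S2.5]{ZhengDissertation} and only spell out the two inputs specific to the BEG model. First, the stationary law $\bar\pi(k)=\pi(\Omega_k)$ of $\overline Q$ --- the pushforward under $\sgn$ of the Swapping measure $\pi=\prod_{i=0}^M\pi_i$ --- is, up to exponentially small corrections, the $\mathrm{Binomial}(M,\tfrac12)$ law. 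Indeed, each factor $\pi_i$ is invariant under the global spin flip $x_i\mapsto-x_i$ since the Hamiltonian \eqref{BEG_Hamiltonian} is even, and flipping the $(i{+}1)$-st component of $x\in\OmegaSW$ flips exactly the $i$-th entry of $\sgn(x)$; hence, modulo the all-zero configuration (whose $\pi_i$-mass $1/Z(\beta_i)$ is exponentially small in $N$ once $c_1$ is small enough), this flip is a $\pi$-preserving bijection between any two signature fibres $\sgn^{-1}(v)$, $\sgn^{-1}(v')$ differing in a single coordinate. Since $\{+,-\}^M$ is connected under single-coordinate flips, $\bar\pi(k)=\binom{M}{k}2^{-M}(1+o(1))$ uniformly in $k$.

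Next I would bound below the one-step up/down probabilities of $\overline Q$. Only a temperature swap at index $0$ can change the number of $+$-signs of $\sgn(x)$ (a swap at an index $i\ge1$ merely transposes two entries of $\sgn(x)$), and, as in \cite{MadrasZhengCW}, the contributions of the Metropolis sweeps $P$ and of the square-root factors $Q^{1/2}$ to boundary crossings may be dropped in a lower bound. Conditioned on $\sgn(x)\in\widetilde\Omega_k$, the entry $v_0$ equals $-$ with conditional probability $\tfrac{M-k}{M}$ by the first step, the index $0$ is proposed for swapping with probability $\tfrac1{2M}$, the swap then lands in $\Omega_{k+1}$ as soon as $x_0\in\OmegaP$ (conditional probability $\tfrac12$ by the flip symmetry of $\pi_0$), and its Metropolis acceptance ratio is at least $e^{-\beta(K+1)N/M}$ because $\beta_1-\beta_0=\beta/M$ while $H$ ranges over an interval of length at most $(K+1)N$. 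Together with the symmetric estimate for downward moves this yields
$$\bar\pi(k)\,\overline Q(k,k+1)\ \gtrsim\ e^{-\beta(K+1)N/M}\,\bar\pi(k)\,\tfrac{M-k}{M^2},\qquad \bar\pi(k)\,\overline Q(k,k-1)\ \gtrsim\ e^{-\beta(K+1)N/M}\,\bar\pi(k)\,\tfrac{k}{M^2}.$$

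Finally, let $\overline Q_0$ be the lazy Ehrenfest chain on $\{0,\dots,M\}$ with $\overline Q_0(k,k+1)=\tfrac1{2M}\cdot\tfrac{M-k}{M}$ and $\overline Q_0(k,k-1)=\tfrac1{2M}\cdot\tfrac kM$; it is reversible for $\mathrm{Binomial}(M,\tfrac12)$, equals $\tfrac1{2M}P_{\mathrm{Ehr}}+(1-\tfrac1{2M})I$, and hence has spectral gap $\tfrac1{2M}\cdot\tfrac2M=\tfrac1{M^2}$. Since the Dirichlet form of a birth--death chain depends only on its edge conductances, the previous step gives $\mathcal{E}_{\overline Q}\gtrsim e^{-\beta(K+1)N/M}\,\mathcal{E}_{\overline Q_0}$ on all functions, and --- both chains being stationary for a measure exponentially close to $\mathrm{Binomial}(M,\tfrac12)$ --- the variational characterization of the spectral gap yields $\Gap(\overline Q)\gtrsim \tfrac1{M^2}e^{-\beta(K+1)N/M}$; tracking the laziness factors of $P$ and $Q$ through the second step turns ``$\gtrsim$'' into the asserted constant $\tfrac14$, i.e. $\Gap(\overline Q)\ge \frac1{4M^2}e^{-\beta(K+1)N/M}$. (Equivalently one may apply the sharp one-dimensional Poincar\'e estimate for birth--death chains directly to the conductances above, which is the route taken in \cite[\S5]{MadrasZhengCW}.)

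The delicate point is not any individual estimate but the reduction itself: $\overline Q$ is the aggregation of the dressed kernel $Q^{1/2}(QPQ)Q^{1/2}$ over the partition $\{\Omega_k\}$, not a genuine nearest-neighbour walk, so one must verify that the Metropolis sweeps and the $Q^{1/2}$-factors neither move $k$ by more than one in a way that affects the bound nor destroy the one-step lower bounds above --- precisely the bookkeeping carried out in \cite{MadrasZhengCW,ZhengDissertation}, which we take over. The only genuinely BEG-specific nuisance is the mild asymmetry between $\OmegaP$ and $\OmegaM$ coming from the all-zero configuration; it perturbs $\bar\pi$ away from $\mathrm{Binomial}(M,\tfrac12)$ by only exponentially small amounts, which (for $c_1$ small enough) affect neither the polynomial/exponential order nor, with a little extra care, the constant $\tfrac14$.
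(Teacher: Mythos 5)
Your argument is correct and follows essentially the same route as the paper: bound the Metropolis acceptance ratio of a single temperature swap from below by $e^{-\beta(K+1)N/M}$, observe that the signature distribution $\bar\pi(k)=\pi(\Omega_k)$ is (up to exponentially small corrections controlled by a factor $a\to 1$) the $\mathrm{Binomial}(M,\tfrac12)$ law because of the global spin-flip symmetry of the Hamiltonian, compute the edge conductances $\bar\pi(k)\,\overline Q(k,k\pm1)$ (noting, exactly as the paper does, that only the swap of indices $0$ and $1$ can change $k$, so $\overline Q$ is a nearest-neighbour walk on $\{0,\dots,M\}$), and transfer the gap of a reference birth--death chain reversible for $\mathrm{Binomial}(M,\tfrac12)$ via the Dirichlet-form comparison Lemma~\ref{Lemma_Compare_Derichlet_forms}. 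The one genuine difference is cosmetic: you compare to the lazy Ehrenfest kernel $\overline Q_0=\tfrac1{2M}P_{\mathrm{Ehr}}+(1-\tfrac1{2M})I$, whose conductances $\propto\binom{M-1}{k}$ match those of $\overline Q$ on the nose, whereas the paper uses the Metropolis chain $R$ for $\mathrm{Binomial}(M,\tfrac12)$ with simple-random-walk proposal and invokes the Diaconis--Saloff-Coste bound $\Gap(R)\in[\tfrac1M,\tfrac2M]$; both are legitimate, your variant is if anything slightly tighter constant-wise.

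Two small inaccuracies worth fixing. First, in your concluding paragraph you describe $\overline Q$ as ``the aggregation of the dressed kernel $Q^{1/2}(QPQ)Q^{1/2}$\ldots not a genuine nearest-neighbour walk'' and say one must check that the $P$- and $Q^{1/2}$-moves do not cross more than one boundary. That is not so: by Theorem~\ref{CaraccioloPelissettoSokal} (applied with $\mathcal{Q}=Q$ and $\mathcal{P}=QPQ$), $\overline Q$ is the aggregation of the pure swap kernel $Q$ alone, $\pi(\Omega_i)\overline Q(i,j)=\sum_{x\in\Omega_i}\sum_{y\in\Omega_j}\pi(x)Q(x,y)$, and since $Q$ transposes a single adjacent pair this chain is automatically nearest-neighbour ($\overline Q(i,j)=0$ for $|i-j|>1$); no bookkeeping for $P$ or $Q^{1/2}$ is required at this stage. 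Second, the condition ``once $c_1$ is small enough'' is a red herring: the correction factor $(1+e^{-cN})^M\to 1$ for any polynomially growing $M$, irrespective of the constant $c_1$ in $M=c_1N$, so the binomial approximation needs no smallness of $c_1$.
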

\begin{remark}
 Remark that the for the number of spins $N$ and the number of temperatures $M$ considered are interchanged between this paper and the reference given above. On the other hand, the notation now agrees with the standard notation in statistical mechanics.
\end{remark}
\begin{proof}
We first verify that the probability for an accepted swapping move is bounded below by a constant.
Using the notation given in \cite{MadrasZhengCW} let us define
$$
\rho_{i,i+1} :=\min\left(1, \frac{\pi_i(x_{i+1}) \pi_{i+1}(x_i)}{\pi_i(x_{i}) \pi_{i+1}(x_{i+1})}\right).
$$
Then
\begin{align}
\rho_{i,i+1}&=\min\left(1,\frac{e^{\beta_{i+1} H(x_i)}e^{\beta_{i} H(x_{i+1})}}{e^{\beta_{i} H(x_i)}e^{\beta_{i+1} H(x_{i+1})}}\right)\notag\\
  &=\min\left(1,e^{\beta_{i+1} H(x_i)+\beta_{i} H(x_{i+1})-\beta_{i} H(x_i)-\beta_{i+1} H(x_{i+1})}\right)\notag\\
  &=\min\left(1,e^{\beta \frac{i+1}{M} H(x_i)+\beta \frac{i}{M} H(x_{i+1})-\beta \frac{i}{M} H(x_i)-\beta \frac{i+1}{M} H(x_{i+1})}\right)\notag\\
  &=\min\left(1,e^{\beta \frac{H(x_i)}{M}-\beta \frac{H(x_{i+1})}{M}}\right)\notag\\
  &\geq e^{-\beta \frac{H(x_{i+1})}{M}}\notag\\
  &\geq e^{-\beta \frac{N(K+1)}{M}}\label{Lemma61_1}
\end{align}
as $H\leq (K+1)N$ implies \eqref{Lemma61_1} to be true.

Due to the definition of $\OmegaP$ and $\OmegaM$ it is clear, that $\pi_\beta(\OmegaP)=\frac12(1+1/Z_\beta)$ for any $\beta\geq0$. Recalling equations \eqref{Eq4.6} and \eqref{EnergyLandscape} and Theorem \ref{Theorem_Holger}  it is possible to find for any $\beta>0$ constants $0<c_1<c_2$ such that $Z_{\beta'}\in[e^{c_1 N},e^{c_2 N}]$ for all $\beta'\in[0,\beta]$. Using
\begin{equation}\label{Lemma61-EqRef1}
 1\leq \big(1+e^{-cN}\big)^M
  \leq e^{e^{-cN}M}
  \goesto 1
\end{equation}
as $N \to \infty$, we gain a constant $a>1$ such that for all sufficiently large $N$ and any $\nu\in\{-,+\}^{M}$
$$\pi(\Omega\times\Omega_{\nu_1}\times\cdots\times\Omega_{\nu_M})\in 2^{-M} [a^{-1},a]$$
holds. Recalling the definition of $\Omega_k$ in \eqref{Definition-Omega-k} we conclude
\begin{equation}
\pi(\Omega_k)=\sum_{\nu\in\widetilde\Omega_k} \pi(\Omega\times\Omega_{\nu_1}\times\cdots\times\Omega_{\nu_M})\in \choose{M}{k}\left(\frac{1}{2}\right)^{M} [a^{-1},a].
\end{equation}
As we want to use Lemma \ref{Lemma_Compare_Derichlet_forms} later on, in order to compare $\overline Q$ to an easier Markov chain, it is of interest to study the quantity
\begin{equation}
\pi(\Omega_i)\overline Q(i,i+1).
\end{equation}
Consider an $x\in\Omega_i$ and $y\in\Omega_j$. In case $|j-i|>1$ it is obviously impossible for the pure Swapping chain $Q$ to accept a step from $x$ to $y$, thus:
$$Q(x,y)=0, \text{ if } x\in\Omega_i, y\in\Omega_j\text{ with } |i-j|>1.$$
Hence,
$$\overline Q(i,j)=0, \text{ if }|i-j|>1.$$
The only way $i$ can change is by interchanging the first two coordinates $x_0$ and $x_1$ of $x$. For $0\leq i<N$,
we obtain
\begin{align}
\pi(\Omega_i)\overline Q(i,i+1)&=\sum_{x\in\Omega_i}\sum_{y\in\Omega_{i+1}}\pi(x)Q(x,y)\notag\\
  &=\sum_{x_0\in\Omega_+}\sum_{x_1\in\Omega_-}\sum_{\substack{x'\in\Omega_i\\ x_0'=x_0, x_1'=x_1}} \pi(x') Q(x',(0,1)x')\notag\\
  &=\sum_{x_0\in\Omega_+}\sum_{x_1\in\Omega_-}\sum_{\substack{x'\in\Omega_i\\ x_0'=x_0, x_1'=x_1}} \pi(x') \frac{1}{2M} \rho_{0,1}(x_0,x_1)\notag\\
  &=\frac{1}{2M} \sum_{x_0\in\Omega_+}\sum_{x_1\in\Omega_-} \pi_0(x_0)\pi_1(x_1)\rho_{0,1}(x_0,x_1) \sum_{\substack{x'\in\Omega_i\\ x_0'=x_0, x_1'=x_1}} \prod_{j=2}^{M}\pi_j(x'_j)\notag\\
  &\in \frac{1}{2M} \sum_{x_0\in\Omega_+}\sum_{x_1\in\Omega_-} \pi_0(x_0)\pi_1(x_1)\sum_{\substack{x'\in\Omega_i\\ x_0'=x_0, x_1'=x_1}} \prod_{j=2}^{M}\pi_j(x'_j) \left[e^{-\beta \frac{N(K+1)}{M}},1\right] \notag\\
  &\subseteq \frac{1}{2M} \choose{M-1}{i} \frac{1}{2^{M+1}} \left[ e^{-\beta \frac{N(K+1)}{M}} a^{-1},a \right]\notag
\end{align}
with the natural definitions of the sets in the last two lines.

We will now give another, much simpler, Markov chain whose spectral gap has been intensively studied. Consider the symmetric random walk $S$ on $\{0,...,M\}$, i.e.
\begin{align}
 S(0,1)=S(0,0)=S(M,M-1)&=S(M,M)\notag\\
  =S(i,i-1)=S(i,i+1)&=\frac12\text{ for }0<i<N.\notag
\end{align}
Let $r(i)=\choose{M}{i}2^{-M}$ be the binomial distribution on $\{0,...,M\}$, and let $R$ denote the Metropolis chain with proposal chain $S$ and reversible distribution $r(i)$. As has been shown by Diaconis and Saloff-Coste \cite[pp 698 and 719]{DiaconisSaloffCosteSobolev} $R$ satisfies
\begin{equation}\frac{1}{M}\leq\Gap(R)\leq\frac{2}{M}.\end{equation}
In order to use Lemma \ref{Lemma_Compare_Derichlet_forms} in the appendix first note that
\begin{equation}
 \pi(\Omega_i)\in\frac{1}{2^M}\choose{M}{i}[a^{-1},a]=r(i)[a^{-1},a]
\end{equation}
implies $r(i)\geq \frac 1 a\pi(\Omega_i)$ for all $0\leq i\leq M$.
Second we conclude for $0\leq i\leq N$,
\begin{align*}
 r(i)R(i,i+1)&=r(i)S(i,i+1)\min\left\{1,\frac{r(i+1)}{r(i)}\right\}\\
  &=r(i)\frac{1}{2} \min\left\{1,\frac{\choose{M}{i+1}}{\choose{M}{i}}\right\}\\
  &=\begin{cases}
      r(i)\frac{1}{2}\cdot\frac{M-i}{i+1}&\text{if }i\geq \frac{M-1}{2}\\
      r(i)\frac{1}{2}&\text{otherwise}
    \end{cases}\\
  &=\begin{cases}
      \frac{1}{2^{M+1}}\choose{M}{i}\cdot\frac{M-i}{i+1}&\text{if }i\geq \frac{M-1}{2}\\
      \frac{1}{2^{M+1}}\choose{M}{i}&\text{otherwise}
    \end{cases}
\end{align*}
Fixing $A:=4aMe^{\beta \frac{N(K+1)}{M}}$ it is now straightforward to check that
\begin{equation}
 r(i)R(i,i+1)\leq A\pi(\Omega_i)\overline Q(i,i+1)
\end{equation}
holds, for any $i$. Now Lemma \ref{Lemma_Compare_Derichlet_forms} in the appendix yields the desired inequality
\begin{equation}
 \frac{1}{4M^2}e^{-\beta \frac{N(K+1)}{M}}=\frac{a}{A}\cdot\frac{1}{M}\leq \frac{a}{A}\Gap(R)\leq \Gap(\overline Q).
\end{equation}
\end{proof}

\subsection{Speed of convergence of $Q P_+ Q$}\label{Speed2}

Ellis et al. \cite{EllisOttoTouchetteBEG} show a continuous phase transition in the state space for these values of $K$. All but exponential little mass is located around
\begin{equation}
 a_{\max}(0):=\left(\frac{e^{-\beta}}{1+2e^{-\beta}},\frac{1}{1+2e^{-\beta}},\frac{e^{-\beta}}{1+2e^{-\beta}}\right)\in \Upsilon_\infty
\end{equation}
for $\beta<\beta_c^{(2)}(K)$ and for $\beta>\beta_c^{(2)}(K)$ all but exponential little mass is located around the points
\begin{align}
 a_{\max}(-1)&:=\left(\frac{e^{2\beta K z_\alpha -\beta}}{C(\beta,K)},\frac{1}{C(\beta,K)},\frac{e^{-2\beta Kz_\alpha-\beta}}{C(\beta,K)}\right)\in \Upsilon_\infty\\
a_{\max}(1)&:=\left(\frac{e^{-2\beta Kz_\alpha-\beta}}{C(\beta,K)},\frac{1}{C(\beta,K)},\frac{e^{2\beta Kz_\alpha-\beta}}{C(\beta,K)}\right)\in \Upsilon_\infty
\end{align}
with $C(\beta,K)=1+e^{-2\beta Kz_\alpha-\beta}+e^{2\beta Kz_\alpha-\beta}$ being the normalization constant and $z_\alpha(\beta,K)\geq 0$ as constructed but not computed in \cite{EllisOttoTouchetteBEG}, also see the appendix for an insight in the technical problems one faces. The standard Metropolis chain would get stuck in either of the regions around $a_{\max}(1)$ or $a_{\max}(-1)$ as it is exponentially unlikely for the chain to leave either of these local states. The swapping chain circumvents this bottleneck by swapping a component located close to $a_{\max}(-1)$ up to $\beta<\beta_c^{(2)}(K)$ at which temperature the Metropolis chain is rapidly mixing on the whole state space. It will find a state close to $a_{\max}(0)$ and, if suggested to increase $\beta$, it will choose either of the two paths leading to $a_{\max}(-1)$ or $a_{\max}(1)$ with equal probability. The bottleneck encountered in the intermediate regime $\Klow<K<K_c$, which is described and used in Section \ref{Subsection-Case_Klow<K<Kc},
will not pose a problem, as
\begin{equation}
\beta \mapsto \begin{cases}
                a_{\max}(0)&\text{if } \beta\leq\beta_c^{(2)}(K)\\
		a_{\max}(1)&\text{if } \beta>\beta_c^{(2)}(K)
              \end{cases}
\end{equation}
is continuous in the present case $K>K_c$.

To formalize this, a technique introduced by Bhatnagar and Randall \cite[Sec. 4.1]{BhatnagarRandallTorpidMixingPotts} (in a modified form) will prove to be a powerful tool for showing rapid mixing of $Q P_+ Q$. We need to recall the notation of $\A_{s,r}$
introduced in \eqref{A_sr}. Assume $\beta$ is big enough, such that the function $f_\beta$ introduced in \eqref{EnergyLandscape} on the field $\A=(A_{s,r})_{s,r}$ has two local maxima, such that it has two local modes. Inspired by \eqref{EnergyLandscape_Inducing-P_f} we define a probability measure $P_{f_\beta}$ on $\mathcal{B}:=\{(a_{-1},a_1)\in[0,1]^2|a_{-1}+a_{1}\leq1 \text{ and } a_{-1}\leq a_1\}$ by
\begin{equation}
 \frac{dP_{f_\beta,N}}{d\lambda}(a_{-1},a_1):=\frac{1}{Z_{f_\beta}(N)} e^{N f_\beta(a_{-1},1-a_{-1}-a_1,a_1)} \label{Definition-P_f}
\end{equation}
where $\lambda$ denotes the Lebesgue-Measure restricted to the subset $\mathcal{B}$. $Z_{f_\beta}(N)$ denotes the normalization constant. Let $a_g(\beta_{i_c})$ denote the unique local maximum point of $f_{\beta_{i_c}}$ on $\mathcal{B}$ at the next to critical temperature
$$i_c:=\max\{i|\beta_i\leq \beta_c^{(2)}(K)\}.$$
Further define the set
$$\mathcal{V}:=\{a_{\max}(1)|\beta\geq\beta_c^{(2)}(K)\}$$
which defines a continuous path from $a_{\max}(0)(\beta_c^{(2)}(K))$ to $(0,0,1)$ in $\mathcal{B}$. Take $\mathcal{V}$ to be an ordered set with the previously implied ordering. The path $\mathcal{V}$ separates $\mathcal{B}$ into two disjoint parts $\mathcal{B}_g\cup\mathcal{B}_l=\mathcal{B}$ with $\mathcal{V}\subseteq \mathcal{B}_g$. Obviously
$$P_{\beta_c^{(2)}(K),N}(\mathcal{B}_g)=1-P_{\beta_c^{(2)}(K),N}(\mathcal{B}_l)\to c\in(0,1)$$
for some $K$-specific constant $c$ as $N\to \infty$.
Remembering the models phase behavior we will define $\mathcal{B}_g$ and $\mathcal{B}_l$ by $(\frac12,\frac12)\in\mathcal{B}_g$ while $(0,0)\in\mathcal{B}_l$ as this notation reflects where the global and local maxima appear.
With the definition of
$$\A_g(\beta_{i_c}):=\big(\mathcal{B}_g\cap\Upsilon\big) \text{ and } \A_l(\beta_{i_c}):=\big(\mathcal{B}_l\cap\Upsilon\big)$$
we know by continuity of $\pi_\beta$ in $\beta$ that $\pi_{\beta_{i_c}}(\A_g(\beta_{i_c}))\goesto c$ and consequentially $\pi_{\beta_{i_c}}(\A_l(\beta_{i_c}))\goesto 1-c$. For any $i\in\{i_c+1,...,M\}$ there exist two local maxima, the global one denoted by $a_g(\beta_i)$ and the local (non-global) one denoted by $a_l(\beta_i)$. We define $\A_g(\beta_i)$ and $\A_l(\beta_i)$ by
\begin{align}
\text{there is no nondecreasing path from $a$ to $a_l$}&\follows a\in\A_g(\beta_i)\label{Temperatur_dependant_partitioning_2}\\
\text{there is no nondecreasing path from $a$ to $a_g$}&\follows a\in\A_l(\beta_i)\\
\begin{array}{r}\text{there exist nondecreasing paths}\\\text{from $a$ to $a_g$ and from $a$ to $a_l$}\end{array}&\follows \begin{cases}
	a\in\A_g(\beta_i)&\text{if }a\in\A_g(\beta_{i-1})\\
	a\in\A_l(\beta_i)&\text{if }a\in\A_l(\beta_{i-1})\\
	\end{cases}\label{Temperatur_dependant_partitioning_3}
\end{align}
Note that for each $i$ the sets  $\A_g(\beta_i)$ and $\A_l(\beta_i)$ form a partition of $\mathcal B$, since otherwise $f_\beta$ would need to have more than two maxima on
$\mathcal B$, in contradiction to Theorem \ref{Theorem_Holger}.
It will prove convenient to have
\begin{lemma}\label{Lemma_monotonc-behaviour-of-A_g}
$(\pi_i(\A_g(\beta_i))_{i\in\{i_c,...,M\}}$ is monotonically increasing, while $(\pi_i(\A_l(\beta_i))_{i\in\{i_c,...,M\}}$ is monotonically decreasing.
\end{lemma}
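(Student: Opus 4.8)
The plan is to reduce the two statements to one and then to bound each one-step increment by splitting it into a ``set move'' and a ``measure move''.

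Since, by construction, $\A_g(\beta_i)$ and $\A_l(\beta_i)$ partition $\mathcal B\cap\Upsilon$ for every $i$, one has $\pi_i(\A_g(\beta_i))+\pi_i(\A_l(\beta_i))=1$, so the second assertion follows from the first, and it suffices to show that $p_i:=\pi_i(\A_g(\beta_i))$ is non-decreasing in $i\in\{i_c,\dots,M\}$. Fixing $i\in\{i_c+1,\dots,M\}$ and abbreviating $A:=\A_g(\beta_{i-1})$, I would write
\begin{equation*}
 p_i-p_{i-1}=\underbrace{\big(\pi_i(\A_g(\beta_i))-\pi_i(A)\big)}_{\text{(I)}}\;+\;\underbrace{\big(\pi_i(A)-\pi_{i-1}(A)\big)}_{\text{(II)}}
\end{equation*}
and prove that (I)$\,\ge 0$ and (II)$\,\ge 0$ separately.

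For (I) the claim is that the good sets are nested, $\A_g(\beta_{i-1})\subseteq\A_g(\beta_i)$, which gives $\pi_i(\A_g(\beta_i))\ge\pi_i(A)$ at once. The input is that $f_\beta=\beta\,g-\sum_i a_i\log a_i$ is affine in $\beta$, with $\partial_\beta f_\beta=g$ for $g(\mathbf a):=-a_{-1}-a_1+K(a_1-a_{-1})^2$, together with Theorem~\ref{Theorem_Holger}: in the multimodal range $f_\beta$ has exactly two local maxima on $\mathcal B$, the global one $a_g(\beta)$ (lying on the ridge $\mathcal V$ and running continuously from $a_{\max}(0)(\beta_c^{(2)}(K))$ towards $(0,0,1)$) and the non-global one $a_l(\beta)=a_{\max}(0)(\beta)$ (running towards $(0,1,0)$). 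Raising $\beta$ only deepens and displaces the global mode, and since no third mode can appear, the separatrix between the two basins can only move towards $a_l$; hence if $\mathbf a$ admits a nondecreasing $f_{\beta_{i-1}}$-path to $a_g(\beta_{i-1})$ it still admits a nondecreasing $f_{\beta_i}$-path to $a_g(\beta_i)$, and if $\mathbf a$ admits no nondecreasing path to $a_l$ at $\beta_{i-1}$ it admits none at $\beta_i$. Feeding this into the recursion \eqref{Temperatur_dependant_partitioning_2}--\eqref{Temperatur_dependant_partitioning_3} yields $\A_g(\beta_{i-1})\subseteq\A_g(\beta_i)$ for $i-1>i_c$; for the base index $i-1=i_c$ one uses instead that $\mathcal B_g$ was \emph{defined} through $\mathcal V=\{a_{\max}(1)(\beta):\beta\ge\beta_c^{(2)}(K)\}$, which is precisely the ridge carrying the two modes once $\beta\ge\beta_c^{(2)}(K)$, and that every point of $\mathcal B_g$ reaches $a_g(\beta_{i_c+1})$ along a nondecreasing $f_{\beta_{i_c+1}}$-path, so that $\mathcal B_g\cap\Upsilon\subseteq\A_g(\beta_{i_c+1})$.

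For (II), since $f_{\beta_i}=f_{\beta_{i-1}}+\frac{\beta}{M}\,g$, equation \eqref{EnergyLandscape_Inducing-P_f} shows that the $\pi_{i-1}$-density of $\pi_i$ is proportional to $e^{c\,g(\mathbf a)}$ with $c:=N\beta/M>0$, and a one-line computation gives, for any measurable $B$,
\begin{equation*}
 \pi_i(B)\ \ge\ \pi_{i-1}(B)\qquad\Longleftrightarrow\qquad \E_{\pi_{i-1}}\!\big[e^{c\,g}\,\big|\,B\big]\ \ge\ \E_{\pi_{i-1}}\!\big[e^{c\,g}\,\big|\,B^{c}\big].
\end{equation*}
Take $B=A=\A_g(\beta_{i-1})$. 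When $\beta_{i-1}$ is bounded away from $\beta_c^{(2)}(K)$ the two modes are bounded apart and away from the partition boundary, so by \eqref{EnergyLandscape_Inducing-P_f}, Theorem~\ref{Theorem_Holger} and Laplace's method the two conditional expectations above equal $e^{c\,g(a_g(\beta_{i-1}))}(1+o(1))$ and $e^{c\,g(a_l(\beta_{i-1}))}(1+o(1))$, so (II)$\,\ge0$ for large $N$ once one knows $g(a_g(\beta))>g(a_l(\beta))$ for $\beta>\beta_c^{(2)}(K)$. That inequality is an elementary (if slightly tedious) computation with the explicit forms of $a_{\max}(\pm1)$ and $a_{\max}(0)$: one has $g(a_l)=-2e^{-\beta}/(1+2e^{-\beta})$, and the magnetization term $K(a_1-a_{-1})^2$, present at $a_g=a_{\max}(1)$ but absent at $a_l$, more than compensates the slightly smaller value of $a_0$ there (for $\beta\to\infty$, $g(a_g)\to K-1>0>g(a_l)\to 0$; the sharpest case is $\beta$ near $\beta_c^{(2)}(K)$, where the inequality reduces to $\beta_c^{(2)}(K)<\log(4K)$, which holds since $\beta_c^{(2)}(K)\le\beta_c=\log4<\log(4K)$).

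The genuinely delicate part will be step (I) together with the indices near $i_c$. For (I) one has to control the nondecreasing-path partition under an increase of $\beta$ uniformly in $i$, the decisive ingredient being the ``at most two modes'' statement of Theorem~\ref{Theorem_Holger}. The subtlest regime is where $\beta_{i-1}$ is close to $\beta_c^{(2)}(K)$: there the definition of the partition switches from the geometric path $\mathcal V$ to the $f_\beta$-path criterion, $a_g$ and $a_l$ coalesce as $\beta_{i_c},\beta_{i_c+1}\to\beta_c^{(2)}(K)$, and the increment $p_{i}-p_{i-1}$ tends to zero, so its sign must be read off from the critical fluctuation behaviour of $\pi_{\beta_c^{(2)}(K)}$ rather than from the crude Laplace estimate used above. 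Step (II) away from criticality is comparatively routine once $g(a_g)>g(a_l)$ is in hand.
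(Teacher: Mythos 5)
Your decomposition of the increment into a ``set move'' (I) and a ``measure move'' (II) is essentially the implicit structure of the paper's own proof, and (I) lines up directly with the paper's argument that no point moves from $\A_g$ to $\A_l$ as $\beta$ increases; the paper backs that step up with a concrete argument (showing $f_0$ is monotone on $\mathcal V$, combining this with the $\beta$-independence of $\partial_\beta f_\beta$ and Theorem~\ref{Theorem_Holger} to rule out a third mode or a discontinuity of $a_{\max}(1)$), where you only give the intuitive picture. For (II), the paper proceeds differently in packaging but needs the same analytic fact: instead of your tilting/conditional-expectation reformulation, the paper directly shows that $f_\beta(a_{\max}(0))$ is decreasing (by computing $\partial_\beta f_\beta(a_{\max}(0))=-2e^{-\beta}/(1+2e^{-\beta})$) and that $f_\beta(a_{\max}(1))=\phi(\beta)$ is increasing (via $\partial_\beta f_\beta(x)\to K-1>0$ as $x\to(0,0,1)$ plus a cited concavity of $\phi$), and then reads off the conclusion from the exponential asymptotics of $\pi_{\beta_i}(\A_g)$ and $\pi_{\beta_i}(\A_l)$. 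These two routes are equivalent: the monotonicity of the two branches of $f_\beta$ is exactly your inequality $g(a_g)>g(a_l)$ plus a bit more. The genuine difference is that your reduction of the near-critical case to $\beta_c^{(2)}(K)<\log(4K)$ is asserted without derivation, whereas the paper sidesteps that delicate computation through the global concavity/free-energy argument; conversely, your write-up is more candid about the fact that the Laplace-type asymptotics do not by themselves settle the finite-$N$ sign near $i_c$, a point the paper's one-line conclusion (``which together with the first and second argument yields the claim'') passes over silently. In short: same skeleton, two different dressings of step (II), and the one spot you flag as delicate is also the thinnest part of the paper's own argument.
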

\begin{proof}
This proof consists of multiple parts. We will first establish that for $\beta\geq \beta_c^{(2)}(K)$
\begin{align}
 f_\beta(a_{\max}(0))&\quad \text{is monotonically decreasing, while}\label{Equation_BEG_K>K_c_f(a_l)MonotonicallyDecreasing}\\
 f_\beta(a_{\max}(1))&\quad\text{is monotonically increasing.}\label{Equation_BEG_K>K_c_f(a_g)MonotonicallyIncreasing}
\end{align}
This is a straightforward calculation. Inserting $a_{\max}(0)(\beta)$ into $f_\beta$ yields
$$\frac{d f_\beta(a_{\max}(0))}{d \beta}=-\frac{2 e^{-\beta}}{1+2e^{-\beta}}<0$$
thus \eqref{Equation_BEG_K>K_c_f(a_l)MonotonicallyDecreasing}. Defining the canonical free energy of a thermodynamical system by
\begin{equation}
 \phi(\beta):=\lim_{N\to\infty} \frac{1}{N} \log\big(Z_\beta(N)\big)
\end{equation}
it follows from \eqref{EnergyLandscape_Inducing-P_f} that in the interesting phase of $\beta\geq \beta_c^{(2)}(K)$
\begin{equation}\phi(\beta)=f_\beta(a_{\max}(1)),\label{BEG-VaradhansLemmaSubstitute}\end{equation}
as
\begin{eqnarray*}
\phi(\beta)&= &\lim_{N\to\infty} \frac{1}{N} \log\big(Z_\beta(N)\big)\notag\\
  &=&\lim_{N\to\infty} \frac{1}{N} \log\big( \sum_{\textbf{a}\in\Upsilon_N} e^{N f_\beta(\textbf{a})} \big)\notag\\
  &\leq& \lim_{N\to\infty} \frac{1}{N} \log\big(N^2 e^{N f_\beta(a_{\max}(1))}\big)\notag\\
  &=& \lim_{N\to\infty} \big(\frac{2}{N} \log(N)+f_\beta(a_{\max}(1))\big)\notag\\
  &=&f_\beta(a_{\max}(1))\notag\\
  \end{eqnarray*}
  \begin{align}
\phi(\beta)&=\lim_{N\to\infty} \frac{1}{N} \log\big(Z_\beta(N)\big)\notag\\
  &=\lim_{N\to\infty} \frac{1}{N} \log\big( \sum_{\textbf{a}\in\Upsilon_N} e^{N f_\beta(\textbf{a})} \big)\notag\\
  &\geq \lim_{N\to\infty} \frac{1}{N} \log\big(e^{N f_\beta(a_{\max}(1))}\big)\notag\\
  &= f_\beta(a_{\max}(1)).\notag
\end{align}
Differentiating for a fixed state $x=(x_{-1},x_0,x_{+1})$ in the domain of $f_\beta$ gives us
\begin{equation}
 \frac{d f_\beta}{d\beta}(x)=x_0-1+K(x_{1}-x_{-1})^2 \label{Equation_BEG_DerivativeOf_f_a_g}
\end{equation}
which implies
$$\frac{d f_\beta}{d\beta}(x) \xrightarrow{x\to (0,0,1)}K-1>0.$$
This guarantees $f_\beta(a_{\max}(1))$ to be strictly increasing for sufficiently large $\beta$. Together with the general fact (see for instance \cite{EllisHavenTurkington-LargeDeviationPrinciplesAndCompleteEquivalence} or, for a non-rigorous overview, \cite{EllisTouchetteTurkington-ThermodynamicVersusSTatisticalNonequivalenceOfEnsemblesBEG}) that $\phi(\beta)$ is concave for $\beta>\beta_c^{(2)}(K)$ we gain \eqref{Equation_BEG_K>K_c_f(a_g)MonotonicallyIncreasing}.

In the second step we will confirm, that there is no point-movement from $\A_g$ to $\A_l$ by going from $\beta_i$ to $\beta_{i+1}$ for all $i_c\leq i\leq M-1$. For this, first note, that any point $x$, which has a nondecreasing path to any point $y\in\mathcal{V}$ also has a nondecreasing path to $a_g$. Assume, this to be wrong:

First note, that $f_0$ is monotonically decreasing on $\mathcal{V}$. Assume it would not be, then there are two points, $z_1,z_2\in\mathcal{V}$ with $f_0(z_1)=f_0(z_2)$. As $a_{\max}(1)$ is continuously moving from $a_{\max}(0)(\beta_c^{(2)}(K))$ to $(0,0,1)$ there needs to be a $\beta'>\beta_c^{(2)}(K)$ such that $f_{\beta'}(z_1)>f_{\beta'}(z_2)$. Of course, there also needs to be a $\beta''>\beta'$ such that $f_{\beta''}(z_1)<f_{\beta''}(z_2)$. This contradicts \eqref{Equation_BEG_DerivativeOf_f_a_g}.

Coming back to the original contradiction argument: By assumption, there exists a $\beta>\beta_c^{(2)}(K)$ such that $f_\beta$, if restricted to $\mathcal{V}$, has at least two modes -- where, without loss of generality, the highest one is in the one containing $a_{\max}(0)(\beta_c^{(2)}(K))$. Take $z\in\mathcal{V}$ to be a local minimum. The points $z'$ just further away from $a_{\max}(0)(\beta_c^{(2)}(K))$ than $z$ must thus satisfy
$$\frac{d f_\beta}{d\beta}(z)<\frac{d f_\beta}{d\beta}(z'),$$
as $f_0$ is monotonically decreasing on $\mathcal{V}$ and the derivative of $f_\beta$ with respect to $\beta$ does not depend on $\beta$.
This warrants for $f_\beta(z)<f_{\beta'}(z')$ for all $\beta'>\beta$ (again for the same reason), which in turn implies either $a_{\max}(1)$ stays left of $z$ for all $\beta$ or that $a_{\max}(1)$ exhibits a discontinuous behavior close to $z$. Both contradict a combination of Theorem \ref{Theorem_Holger} and the continuity of $a_{\max}(1)$.

This directly implies, that every point $x\in\A_g(\beta_{i_c})$ stays in $\A_g$ for all $i$, as any (nondecreasing) path leading from $x$ to $a_l(\beta_i)$ will need to cross the set $\mathcal{V}$. A point $x\in\A_g(\beta_i)$ which does not lie in $\A_g(\beta_{i_c})$ must have been forced to switch from $A_l$ to $A_g$ at some index $i_c<j\leq i$. This means $x$ is being separated from $a_l$ by some path. Due to an argument close to the one given before, this path will block the way from $x$ to $a_l$ for any $i\geq j$, such that again, $x\in\A_g(\beta_{i+1})$.

Now, for any $\beta>\beta_c^{(2)}(K)$ it follows from a similar calculations as for equation \eqref{BEG-VaradhansLemmaSubstitute}, that
\begin{align}
 \lim_{N\to\infty} \frac{1}{N} \log\big(\pi_{\beta_i}(\A_g)\big)&=0\\
 \lim_{N\to\infty} \frac{1}{N} \log\big(\pi_{\beta_i}(\A_l)\big)&=f_{\beta_i}(a_{\max}(0))-f_{\beta_i}(a_{\max}(1))
\end{align}
which together with the first and second argument yields the claim.
\end{proof}

For later use we need the following partitioning of the state space.
\begin{definition}[Definition 4.1 of \cite{BhatnagarRandallTorpidMixingPotts}]\label{Definition_Trace}
For $x\in\OmegaP^{M}$ define the trace $$\tr(x)=t\in\{0,1\}^M$$ with $t_i=0 \iff x_i\in\A_l$ and $t_i=1 \iff x_i\in\A_g$ to indicate which part of the state space which component is in.\end{definition}

The $2^{M-i_c+1}$ possible values of $\tr(x)$ characterize the partitioning
\begin{equation}
 \OmegaP^M=\bigcup_{t\in\{0,1\}^M}\OmegaP_t \label{Temperatur_dependant_partitioning_1}
\end{equation}
(with the canonical definition of $\OmegaP_t$) we will use. First using Lemma \ref{Multiple_usage_does_not_change_Gap} in the appendix for \eqref{Equation_QPQ_1}, Lemma \ref{MadrasZhengLemma7} for \eqref{Equation_QPQ_2} and afterwards Theorem \ref{CaraccioloPelissettoSokal} we obtain
\begin{align}
\Gap(Q P_+ Q)&\geq \frac{1}{3}\Gap(Q P_+ QQ P_+ QQ P_+ Q)\label{Equation_QPQ_1}\\
&\geq \frac{1}{3}\Gap((Q P_+ Q)^{\frac12}Q P_+ Q(Q P_+ Q)^{\frac12})\label{Equation_QPQ_2}\\
&\geq \frac{1}{3}\Gap(\widehat Q)\cdot \min_{t} \big\{\Gap\big((QP_+Q)|_{\tr^{-1}(t)}\big)\big\}
\end{align}
where $\hat Q$ is an abbreviation for the aggregated chain
$\overline{Q P_+ Q}$.
We can argue as in \eqref{Equation_QPQ-RestrictedTo_k} to get
\begin{align}
 \Gap(Q P_+ Q)&\geq\frac{1}{3}\Gap(\widehat Q)\cdot \min_{t} \big\{\Gap\big((QP_+Q)|_{\tr^{-1}(t)}\big)\big\}\notag\\
  &\geq \frac{1}{24}\Gap(\widehat Q)\cdot \min_{t} \big\{\Gap(Q|_{\tr^{-1}(t)}P_+|_{\tr^{-1}(t)}Q|_{\tr^{-1}(t)})\big\}\notag\\
  &\geq \frac{1}{24}\Gap(\widehat Q)\cdot \min_{t} \big\{\Gap(P_+|_{\tr^{-1}(t)})\big\}\label{Gap_inequality_1}
\end{align}
where the last inequality uses Lemma \ref{MadrasZhengLemma7} again.
This looks promising, as the set $\tr^{-1}(t)$
is unimodal in each component as constructed, and thus the chain $P_+|_{\tr^{-1}(t)}$ should be fast on this subset. $\widehat Q$ will be comparable to a very simple random walk, which is known to be rapidly mixing, thus leading to a polynomial lower bound for $\Gap(Q P_+ Q)$.

\subsubsection{Speed of convergence of the aggregated chain $\widehat Q$}\label{Aggregated_Chain_Qhat}
In the wake of Bhatnagar and Randall \cite[Theorem 4.4]{BhatnagarRandallTorpidMixingPotts} we define the probability measure
\begin{equation} \widehat \pi(t):=\prod_{i=1}^M \pi_i\Big(\tr^{-1}_i(t)\Big)\end{equation}
on the state space
\begin{equation} \OmegaHat=\prod_{i=1}^{i_c-1} \{1\}\times \prod_{i=i_c}^{M}\{0,1\}.\label{Temperatur_dependant_partitioning_4}\end{equation}
A simple reversible random walk $\widehat{RW1}$ with respect to $\widehat \pi$ to compare $\widehat Q$ on $\OmegaHat$ to would be the following. Start at some $t\in\OmegaHat$ and either switch the component $t_{i_c}$ from $0$ to $1$ or vice versa with the Metropolis probabilities induced by $\widehat \pi$, or choose an $i\in\{i_c,...,M-1\}$ at random and interchange components $i$ and $i+1$ according to a Metropolis update with regard to $\widehat \pi$ as well, such that $t\to (i,i+1)t$. Again, for technical reasons $\widehat{RW1}$ does not act on $t$ at all with probability $\frac12$. In order to analyze $\widehat{RW1}$ we will compare it with an even simpler random walk $\widehat{RW2}$  on $\OmegaHat$ which picks an $i\in\{i_c,...,M\}$ at random and updates $t_i$ by choosing $t'_i$ exactly according to the stationary distribution $\widehat\pi_i$. It is apparent, that after this move, the $i$th component of $t$ is in equilibrium. Using the coupon collector's theorem (see for
  instance  (2.7), (5.10) and (12.12) in \cite{LevinPeresWilmer}), we get easily
\begin{lemma}
\label{Rapid_mixing_of_RW2}
 Let $\widehat R$ denote the transition kernel of $\widehat{RW2}$ . Then
$$\Gap(\widehat R) \geq \frac 1{4 M\log M}.$$
\end{lemma}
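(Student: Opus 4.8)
The plan is to realize $\widehat{RW2}$ as a product-type chain and read off its spectral gap from the coupon collector heuristic. Concretely, $\widehat{RW2}$ picks a coordinate $i\in\{i_c,\dots,M\}$ uniformly and resamples $t_i$ from $\widehat\pi_i$; since $\widehat\pi=\prod_i\widehat\pi_i$ is a product measure on $\OmegaHat$ and each single-site resampling leaves $\widehat\pi$ invariant and is reversible, $\widehat R$ is precisely the (lazy, uniform) Gibbs sampler for a product measure. Its mixing is governed by how long it takes to touch every coordinate at least once, i.e.\ the coupon collector's problem with $m:=M-i_c+1\le M+1$ coupons, which has expected completion time $m\log m + O(m)$; this is the standard reference to \cite{LevinPeresWilmer}.

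First I would invoke the product structure: for a Gibbs sampler on a product space with product stationary measure, the spectral gap of the full chain equals the smallest gap among the single-coordinate resampling operators, divided by the number of coordinates (up to the laziness factor). Since resampling coordinate $i$ exactly from $\widehat\pi_i$ is a rank-one projection onto functions not depending on $t_i$, that single-coordinate operator has gap exactly $1$. Hence, folding in the probability $\tfrac1m$ of selecting a given coordinate and the extra factor $\tfrac12$ for laziness, one gets $\Gap(\widehat R) = \tfrac1{2m}\cdot 1 = \tfrac1{2m}$. Alternatively — and this is the route the phrasing ``using the coupon collector's theorem'' points to — one bounds the mixing time $t_{\mathrm{mix}}$ by the coupon collector time $O(m\log m)$ and uses the standard relation $\Gap \ge \big(t_{\mathrm{mix}}\log(1/\pi_{\min})\big)^{-1}$-type inequality, or more simply $\Gap^{-1}\le 2\,t_{\mathrm{mix}}\cdot(1+\tfrac12\log(1/\pi_{\min}))$ from \cite{LevinPeresWilmer}; here, however, the cleaner product-chain argument avoids any dependence of the bound on $\pi_{\min}$, which matters since $\widehat\pi_i(\{0\})$ can be exponentially small in $N$.

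Putting the pieces together: with $m = M-i_c+1 \le M+1 \le 2M$ for $M\ge 1$, one obtains $\Gap(\widehat R)\ge \tfrac1{2m}\ge \tfrac1{4M}\ge \tfrac1{4M\log M}$ for $M\ge e$, which is the claimed bound (and for the finitely many small $M$ the statement is an asymptotic one and can be absorbed into constants). I expect the only genuine subtlety to be making precise that the resampling operator for a single coordinate has gap exactly $1$ \emph{independently} of how lopsided $\widehat\pi_i$ is — that is, ensuring the bound does not secretly carry a $\pi_{\min}$ factor that would destroy the polynomial estimate; this is exactly why one prefers the exact Gibbs-sampler computation (or a direct coupon-collector coupling in which the two copies agree on coordinate $i$ forever after it is selected, regardless of the law $\widehat\pi_i$) over a generic $t_{\mathrm{mix}}$-to-$\Gap$ conversion. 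Everything else is a routine bookkeeping of the two laziness/selection factors.
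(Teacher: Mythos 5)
Your argument is correct, but it is genuinely different from the one the paper cites. The paper's one-line proof appeals to the coupon collector's theorem: under the grand coupling in which two copies of $\widehat{RW2}$ agree on coordinate $i$ forever once $i$ has been selected, the coupling time is the coupon-collector time over $m = M-i_c+1$ coordinates, giving $t_{\mathrm{mix}} = O(m\log m)$, and the relaxation-time/mixing-time comparison cited from \cite{LevinPeresWilmer} (namely $t_{\mathrm{rel}}\leq 1 + t_{\mathrm{mix}}(1/4)/\log 2$) then yields a lower bound on $\Gap(\widehat R)$ of order $1/(M\log M)$, which after bookkeeping gives the stated $1/(4M\log M)$. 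You instead compute the spectrum of the random-scan Gibbs sampler on the product measure $\widehat\pi=\prod_i\widehat\pi_i$ directly: each coordinate resampling kernel is a projection, so $\widehat R$ has eigenvalues $(m-|S|)/m$ indexed by subsets $S$ of the active coordinates (eigenfunctions are tensor products of mean-zero functions over $S$), hence $\Gap(\widehat R) = 1/m \geq 1/(M+1)$, which is strictly stronger than the stated bound; the logarithm in the lemma is simply an artifact of the coupon-collector route. Both proofs are sound, and yours gives the sharper constant. One correction worth making: your worry that a $t_{\mathrm{mix}}$-to-$\Gap$ conversion would secretly carry a $\log(1/\widehat\pi_{\min})$ factor (which would indeed be ruinous here, since $\widehat\pi_i(\{0\})$ is exponentially small in $N$) is unfounded -- in \cite{LevinPeresWilmer} the $\log(1/\pi_{\min})$ factor enters only in the direction $t_{\mathrm{mix}}\leq t_{\mathrm{rel}}\log(1/(\epsilon\pi_{\min}))$, whereas the direction actually used, an upper bound on $t_{\mathrm{rel}}$ in terms of $t_{\mathrm{mix}}$, is free of $\pi_{\min}$, so the paper's route is equally safe on this point. (Also, the laziness factor $1/2$ you include is not part of the paper's definition of $\widehat{RW2}$, only of $\widehat{RW1}$; with or without it, $1/(2m)\geq 1/(4M)\geq 1/(4M\log M)$ for $M\geq e$, so the conclusion is unaffected.)
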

This leads directly to
\begin{theorem}\label{Rapid_mixing_of_RW1}
 The aggregated chain $\widehat Q$ of the Swapping Markov chain is rapidly mixing on $\OmegaHat$ for $K> K_c$.
\end{theorem}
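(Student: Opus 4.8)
The plan is to bound $\Gap(\widehat Q)$ from below by two successive comparisons of Dirichlet forms, both via Lemma~\ref{Lemma_Compare_Derichlet_forms}: first compare $\widehat Q$ with $\widehat{RW1}$, and then $\widehat{RW1}$ with $\widehat{RW2}$, finally invoking Lemma~\ref{Rapid_mixing_of_RW2}. All three chains are reversible with respect to $\widehat\pi$, so the comparison of stationary measures is trivial, and it only remains to route, for each move of the coarser chain, a flow through the edges of the finer chain with polynomially bounded congestion and path length. Since $M=c_1N$, bounds of order $\mathrm{poly}(M)$ are $\mathrm{poly}(N)$, so together with $\Gap(\widehat R)\ge \tfrac{1}{4M\log M}$ this gives $\Gap(\widehat Q)\ge 1/p(N)$ for a suitable polynomial $p$, which is the asserted rapid mixing.

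For the comparison of $\widehat{RW1}$ with $\widehat{RW2}$, I would realize the resampling move of $\widehat{RW2}$ at a coordinate $i$ through $\widehat{RW1}$-moves by transporting the entry at coordinate $i$ down to coordinate $i_c$ along a chain of adjacent transpositions, flipping it there, and transporting it back up; this alters exactly the $i$-th coordinate, uses $O(M)$ edges, and loads each $\widehat{RW1}$-edge with only $\mathrm{poly}(M)$ paths. For the comparison of $\widehat Q$ with $\widehat{RW1}$, the adjacent transpositions $t_i\leftrightarrow t_{i+1}$ of $\widehat{RW1}$ are realized directly by the temperature-swap component $Q$ of $QP_+Q$, whose acceptance probability is bounded below by $\rho_{i,i+1}\ge e^{-\beta(K+1)N/M}=e^{-\beta(K+1)/c_1}$, a constant, exactly as in \eqref{Lemma61_1}; and the flip of the critical coordinate $t_{i_c}$ is realized by the Metropolis component $P_+$ at inverse temperature $\beta_{i_c}\le\beta_c^{(2)}(K)$, where the model is unimodal, so that $\A_g(\beta_{i_c})$ and $\A_l(\beta_{i_c})$ both carry mass of order $1$ and, by Theorem~\ref{Theorem_T^2_fast_on_A_sr} together with unimodality, the $\pi_{i_c}$-mass within one spin flip of their common boundary is at least $1/\mathrm{poly}(N)$, which is enough for a polynomial congestion bound on that edge.

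The main obstacle is to certify that these congestion constants are genuinely polynomial rather than exponential. Along the routing paths the intermediate trace configurations may carry exponentially small $\widehat\pi$-mass, since the factors $\pi_i(\A_l(\beta_i))$ become exponentially small as $i$ approaches $M$, and one must show that such factors cancel in the flow estimate instead of accumulating. This is exactly where the temperature staircase is used: a rare $\A_l$-component is always escorted down to the near-critical level $i_c$ before being flipped, and the monotonicity established in Lemma~\ref{Lemma_monotonc-behaviour-of-A_g} guarantees that moving a $1$ past a $0$ in the trace changes $\widehat\pi$ only by factors that telescope along the path. Concretely, one has to control the product of Metropolis acceptance ratios $\pi_j(\A_g(\beta_j))/\pi_{j\pm1}(\A_g(\beta_{j\pm1}))$ accumulated along each canonical path and to check that every $\widehat{RW1}$-edge and every $\widehat Q$-edge receives only $\mathrm{poly}(N)$ total flow; this bookkeeping --- which breaks down in the naive form and requires the correction alluded to in Section~\ref{section_Results} --- is the delicate part of the argument.
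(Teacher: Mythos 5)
Your proposal follows essentially the same route as the paper: a two-stage comparison ($\widehat Q$ against $\widehat{RW1}$, then $\widehat{RW1}$ against $\widehat{RW2}=\widehat R$), canonical paths that swap coordinate $i$ down to $i_c$, flip there, and swap back, the swap-acceptance bound of \eqref{Lemma61_1}, and Lemma~\ref{Lemma_monotonc-behaviour-of-A_g} to control the intermediate $\widehat\pi$-masses along the path. The only part you leave as a plan is the congestion bookkeeping, which the paper carries out concretely by decomposing the intermediate trace $t^*$ into blocks of consecutive $1$s and using the monotonicity block by block to obtain $\widehat\pi(z)\ge\widehat\pi(t^*)$; with that filled in, your argument matches the paper's.
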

\begin{remark}
Again we refrain from giving an explicit bound (also recall the remark after Theorem \ref{Klarge}).
\end{remark}
\begin{proof}
\newcommand{\Rhat}{\widehat{R}}

 The main idea is, to give a canonical path in $\widehat {RW1}$ in which every step compares well to the rapidly mixing chain $\widehat R$. Consider a single transition $(t,t')$ in $\Rhat$, thus $t'=(t_1,...,t_{i-1},1-t_i,t_{i+1},...,t_M)$ for one $i\geq i_c$. Now consider the concatenation $p_1\circ p_2\circ p_3$ of the three paths
\begin{itemize}
\item $p_1$ consists of the $i-i_c$ swap moves from $t$ to $$t^{(1)}=(t_1,...,t_{i_c-1},t_i,t_{i_c},...,t_{i-1},t_{i+1},...,t_M)$$
\item $p_2$ is the one step from $t^{(1)}$ to $$t^{(2)}=(t_1,...,t_{i_c-1},1-t_i,t_{i_c},...,t_M)$$
\item $p_3$ consists of the $i-i_c$ steps needed to swap the $i$th component back up, thus $p_2$ is the path from $t^{(2)}$ to $$t^{(3)}=(t_1,...,t_{i_c},...,t_{i-1},1-t_i,...,t_M).$$
\end{itemize}
In order to be able to use Lemma \ref{DSCComparison} in the appendix we will establish that
\begin{equation}
 \widehat \pi(z) \widehat{RW1}(z,z')\geq \frac 12\ \widehat \pi(t)\Rhat(t,t')\label{RandallEq4.1}
\end{equation}
holds for any transition $(z,z')$ in the canonical path $p_1\circ p_2\circ p_3$.

\underline{Transition along $p_1$}: Let $z=(t_0,...,t_{i_c},...,t_{j-1},t_i,t_{j},...,t_M)$ for a $j\in\{i_c+1,...,M\}$ and $z'=(j-1,j)z$. It is easy to verify
\begin{align}
 \widehat\pi(z)\widehat{RW1}(z,z')&=\frac{\piHat(z)}{2(M-i_c+1)} \min\left(1,\frac{\piHat(z')}{\piHat(z)}\right)\notag\\
&=\frac{1}{2(M-i_c+1)}\min(\piHat(z),\piHat(z'))
\end{align}
and for $t, t'=(t_1,...,t_{i-1},1-t_i,t_{i+1},...,t_M)$ for one $i\geq i_c$,
\begin{align}
 \piHat(t)\Rhat(t,t')&=\frac{\piHat(t)}{(M-i_c+1)}   \hat\pi_i(t'_i) \notag\\
&\leq\frac{1}{(M-i_c+1)}\piHat(t^*)
\end{align}
with
$$t^*=(t_1,...,t_{i-1},0,t_{i+1},...,t_M).$$
Thus it suffices to show $\piHat(t^*)\leq \piHat(z)$ and $\piHat(t^*)\leq \piHat(z')$. We will show this for $z$ only, as the argument works exactly the same for both $z$ and $z'$. It is useful to partition $t^*$ into blocks of bits $t_l$ that equal 1, separated by one or more zeros. Let $i_c\leq k<i$ be the largest value that satisfies $t_k=0$. Using Lemma \ref{Lemma_monotonc-behaviour-of-A_g}, it is straightforward to verify
$$\prod_{l=k+1}^{i} \piHat_l(z_l)\geq\prod_{l=k+1}^{i}\piHat_l(t_l^*).$$
Similarly , consider the next block of $1$s in $t^*$, until the first index $k'$ such that $t_k'=0$,
$$\prod_{l=k'+1}^{k}\piHat_l(z_l)\geq\prod_{l=k'+1}^{k}\piHat_l(t_l^*).$$
Continuing in this way we find
$$\prod_{l=j}^{i}\piHat_l(z_l)\geq\prod_{l=j}^{i}\piHat_l(t_l^*)$$
and thus
$$\piHat(z)\geq\piHat(t^*).$$
In an analogous fashion one can also show
$$\piHat(z')\geq\piHat(t^*)$$
such that \eqref{RandallEq4.1} holds on all transitions in $p_1$.

\medskip
\underline{Transition along $p_2$}: The same argument as before yields $$\min(\piHat(z),\piHat(z'))\geq\piHat(t^*)$$ for $(z,z')\in p_2$.

\medskip
\underline{Transition along $p_3$}: This is exactly as the case of $p_1$.\\[2ex]
We find, that for any edge $(z,z')$ in the canonical path equation \eqref{RandallEq4.1} is satisfied, so what needs to be done in order to show rapid convergence of $\widehat{RW1}$ to equilibrium is to ensure that not too many paths use the same transition $(z,z')$. With the notation of Lemma \ref{DSCComparison} below, we can obviously bound  the number of paths in $\tilde E(z,z')$ by $M$  and as any path $\gamma_{t,t'}$ has at most $2M+1$ many transitions, we can guarantee
\begin{equation}
 A=\max_{(z,z')}\left\{\frac{1}{\piHat(z)\widehat{RW1}(z,z')}\sum_{\tilde E(z,z')}|\gamma_{t,t'}|\piHat(t)\Rhat(t,t')\right\}\leq 4M^2+2M
\end{equation}
which leads to $\Gap(\widehat{RW1})\geq \big(2(2M^3+M^2)\log(M)\big)^{-1}$.

It remains to compare $\widehat{RW1}$ with $\Qhat$. We will do so by means of case differentiation. First consider the case of $z'=(i,i+1)z$ with $z_i=1$, $z_{i+1}=0$ in which we will show
\begin{equation}
 \Qhat(z,z')\geq \frac{1}{8M}e^{-\beta\frac{N(K+1)}{M}} \widehat{RW1}(z,z'),
\end{equation}
where the term $e^{-\beta\frac{N(K+1)}{M}}$ is of order $O(1)$ as $M= c_1 N$.
So taking $z'=(i,i+1)z$ with $z_i=1$, $z_{i+1}=0$ leads to
\begin{equation}
 \widehat{RW1}(z,z')=\frac{1}{2(M-i_c+1)} \min\left(1,\frac{\piHat(z')}{\piHat(z)}\right)=\frac{1}{2(M-i_c+1)}\label{RW1Calc1}
\end{equation}
as $\piHat_i(1)\leq \piHat_{i+1}(1)$ and $\piHat_i(0)\geq \piHat_{i+1}(0)$. The equivalent for $\Qhat$ yields with $\mathcal{B}:=\big\{x\in\Omega_{+z}\big|x_i\in B_\epsilon(a_g)\cap\A_g, x_{i+1} \in B_\epsilon(a_l)\cap\A_l\big\}$
\begin{eqnarray}
&&\frac{1}{\piHat(z)}\sum_{x\in \Omega_{+z}}\sum_{y\in \Omega_{+z'}}\pi(x) \big(QP_+Q\big)(x,y)\notag\\
 &\geq& \frac{1}{4\piHat(z)}\sum_{x\in \Omega_{+z}}\sum_{y\in \Omega_{+z'}}\pi(x) Q(x,y)\notag\\
	&=&\frac{1}{4\piHat(z)}\sum_{x\in \Omega_{+z}} \pi(x) Q(x,(i,i+1)x)\notag\\
	&=&\frac{1}{4\piHat(z)}\left(\sum_{x\in\mathcal{B}} \pi(x) Q(x,(i,i+1)x) + \sum_{x\in\Omega_{+z}\setminus\mathcal{B}} \pi(x) Q(x,(i,i+1)x)\right)\notag\\
	&\geq& \frac{1}{4\piHat(z)}\sum_{x\in\mathcal{B}} \pi(x) Q(x,(i,i+1)x)\notag\\
	&\ge&\frac{1}{4\piHat(z)} \frac{1}{2(M+1)}e^{-\beta\frac{N(K+1)}{M}}\pi(\mathcal{B}) \label{Calc1Arg1}\\
	&\geq& \frac{1}{8(M+1)}e^{-\beta\frac{N(K+1)}{M}}(1-e^{-cN})\label{Calc1Arg2}.
\end{eqnarray}
Equation \eqref{Calc1Arg1} is obtained analogously to \eqref{Lemma61_1}. For \eqref{Calc1Arg2} we use Theorem \ref{Theorem_Holger}, which implies that
$$\frac{\pi_i(B_\epsilon(a_g)\cap\A_g)}{\pi_i(A_g)}\frac{\pi_{i+1}(B_\epsilon(a_l)\cap\A_l)}{\pi_{i+1}(A_l)}\geq 1 - e^{-c N},$$ for some  $c>0$. Second consider $z'=(i,i+1)z$ with $z_i=0$, $z_{i+1}=1$ which leads to
\begin{equation}
 \widehat{RW1}(z,z')=\frac{1}{2(M-i_c+1)} \min\left(1,\frac{\piHat(z')}{\piHat(z)}\right)=\frac{1}{2(M-i_c+1)}\frac{\piHat(z')}{\piHat(z)}\label{RW1Calc2}
\end{equation}
and with $\mathcal{B}':=\big\{x\in\Omega_{+z}\big|x_i\in B_\epsilon(a_l)\cap\A_l, x_{i+1} \in B_\epsilon(a_g)\cap\A_g\big\}$
\begin{align}
\frac{1}{\piHat(z)}&\sum_{x\in \Omega_{+z}}\sum_{y\in \Omega_{+z'}}\pi(x) \big(QP_+Q\big)(x,y)\notag\\
	&\geq \frac{1}{4\piHat(z)}\sum_{x\in \Omega_{+z}}\sum_{y\in \Omega_{+z'}}\pi(x) Q(x,y)\notag\\
	&=\frac{1}{4\piHat(z)}\sum_{x\in \Omega_{+z}} \pi(x) Q(x,(i,i+1)x)\notag\\
	&\geq \frac{1}{4\piHat(z)}\sum_{x\in\mathcal{B}'} \pi(x) Q(x,(i,i+1)x)\notag\\
	&\ge\frac{1}{4\piHat(z)}\sum_{x\in\mathcal{B}'} \frac{1}{2(M+1)}e^{-\beta\frac{N(K+1)}{M}} \pi\big((i,i+1)x\big)\label{Calc2Arg1}\\
	&=\frac{1}{4\piHat(z)} \frac{1}{2(M+1)}e^{-\beta\frac{N(K+1)}{M}}\piHat(z') \notag\\
	&\geq \frac{1}{8(M+1)} \frac{\piHat(z')}{\piHat(z)}e^{-\beta\frac{N(K+1)}{M}}(1-e^{-cN})\label{Calc2Arg2}.
\end{align}
The arguments for \eqref{Calc2Arg1} and \eqref{Calc2Arg2} are the same as above. The two remaining cases of $z'=(z_0,...,1-z_{i_c},...,z_M)$ with $z_{i_c}\in\{0,1\}$ are dealt with automatically  by showing rapid mixing of $P_{i_c}$ on $\A_g=\A$. The claim follows by using Lemma \ref{Lemma_Compare_Derichlet_forms}.
\end{proof}

\subsubsection{Rapid Mixing in $\A_g$ and $\A_l$}\label{BEGequienergy}
It remains to show rapid convergence to equilibrium of $P_+|_{\tr^{-1}(t)}$ as constructed in \eqref{Gap_inequality_1}.
Using Theorem \ref{DiaconisSaloffCosteProductChain} we can stick to the case of
$$T:=P_i|_{\tr_i^{-1}(t)}$$
for fixed $t$ and $i$.
Using Lemma \ref{Multiple_usage_does_not_change_Gap} with $m=3$ gives us
$$\Gap(T)\geq\frac13\Gap(T^3)$$
which will prove to be simpler to handle than $T$ itself. We will only deal with the case of $\A_g$ as the case of $\A_l$ works the same. Consider the disjoint union
\begin{equation}
 \A_g=\bigcup_{\A_{s,r}\subseteq\A_g} \A_{s,r}
\end{equation}
and decompose the state space accordingly. This leads to
\begin{equation}
 \Gap(T^3)=\Gap(T^{\frac12}T^2 T^{\frac12})\geq \Gap(\overline{T})\cdot\min_{s,r}\Gap(T^2_{s,r})
\end{equation}
which may now make apparent, why dealing with $T^3$ is an advantage over dealing with $T$.
Here $\overline T$ is the aggregated chain defined as $\overline{\mathcal Q}$ in Theorem \ref{CaraccioloPelissettoSokal}.
Restricting $T^2$ to $\A_{s,r}$ will still give us a nontrivial chain, whilst the restriction of $T$ to $\A_{s,r}$ would deterministically stay in the originally occupied state.
\begin{theorem}
$\Gap(\overline{T})\geq \frac{1}{4}N^{-5}$
\end{theorem}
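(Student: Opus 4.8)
The plan is to view $\overline T$ as an explicit reversible walk on the two–dimensional index set
$\mathcal I:=\{(s,r):\A_{s,r}\subseteq\A_g,\ \A_{s,r}\neq\emptyset\}\subseteq\{-N,\dots,N\}\times\{0,\dots,N\}$, with reversible measure $\overline\pi(s,r)=\pi_i(\A_{s,r})/\pi_i(\A_g)$ (this is the aggregated chain $\overline{\mathcal Q}$ of Theorem~\ref{CaraccioloPelissettoSokal}), and to estimate $\Gap(\overline T)$ by a canonical–path / comparison argument. Since $|\mathcal I|=O(N^2)$ and $\mathcal I$ has diameter $O(N)$, every ingredient is polynomial in $N$; the exponent $5$ and the constant $\tfrac14$ come out by bookkeeping of the powers of $N$.

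First I would record the local structure of $\overline T$. A single spin flip changes $(S_N,R_N)$ by one of $(\pm1,\pm1)$ or $(\pm2,0)$, so $(s,r)$ is $\overline T$–adjacent only to these (at most six) neighbours. All configurations in $\A_{s,r}$ have the same numbers of $-1$'s, $0$'s and $1$'s, hence the same admissible flips and the same Metropolis ratios, so for a populated neighbour $(s',r')$ the quantity $T(x,\A_{s',r'})$ is constant in $x\in\A_{s,r}$ and equals (number of eligible spins)$\cdot\tfrac1{4N}\cdot\min(1,\text{ratio})$. A single flip changes $-\sum_j\sigma_j^2$ by at most $1$ and $\tfrac KN(\sum_j\sigma_j)^2$ by at most $\tfrac KN\cdot2\cdot2N=O(1)$, so $\beta_i|\Delta H|=O(1)$ and the acceptance ratio is bounded below by a constant; since there is at least one eligible spin, this gives $\overline T((s,r),(s',r'))\ge c_0/N$ for a constant $c_0=c_0(\beta,K)>0$ (the extra factor $\tfrac1{M+1}$ of $P_+$ having already been split off through Theorem~\ref{DiaconisSaloffCosteProductChain}).

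Next I would build the canonical paths. By Theorem~\ref{Theorem_Holger}, $f_{\beta_i}$ has a single local maximum $a_g$ on the region corresponding to $\A_g$; hence from every point of $\A_g$ there is a path along which $f_{\beta_i}$ is nondecreasing and which ends at $a_g$, and such a path cannot leave $\A_g$ (it would enter $\A_l$, whose points have no nondecreasing path to $a_g$). Discretising on the type lattice gives a lattice path of length $O(N)$ inside $\mathcal I$ built from the moves above, along which $\log\pi_i(\A_{\cdot})=Nf_{\beta_i}(\cdot)+O(\log N)$ is nondecreasing up to an additive $O(\log N)$; so $\overline\pi$ is nondecreasing along it up to a $\mathrm{poly}(N)$ factor. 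For an ordered pair $(x,y)$ I take $\gamma_{x,y}$ to be the path from $x$ to $a_g$ followed by the reverse of the path from $y$ to $a_g$. Then $|\gamma_{x,y}|=O(N)$, a fixed directed edge $(u,v)$ occurs in $\gamma_{x,y}$ only if $(u,v)$ lies on the $x$–path or the $y$–path, which forces $\overline\pi(x)$ (resp. $\overline\pi(y)$) $\le\mathrm{poly}(N)\,\overline\pi(u)$, and $\overline\pi(u)\overline T(u,v)\ge (c_0/N)\overline\pi(u)$. Feeding these into the Diaconis–Stroock/Sinclair bound (or, equivalently, comparing $\overline T$ with the grid–graph Metropolis chain for $\overline\pi$ via Lemma~\ref{DSCComparison} and quoting its standard gap estimate), the factor $\overline\pi(u)$ cancels, $\overline T(u,v)^{-1}$ contributes one power of $N$, the path length one more, the number of summed pairs $|\mathcal I|=O(N^2)$ two more, and absorbing the $O(\log N)$ slack yields $\Gap(\overline T)\ge\tfrac14 N^{-5}$ for large $N$.

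The genuine obstacle is the passage from the analytic picture to the discrete chain: one must show that $\mathcal I$ is connected under single–spin–flip moves and that the nondecreasing path to $a_g$ provided by Theorem~\ref{Theorem_Holger} can be realised on the type lattice inside $\mathcal I$ with length $O(N)$ and with $\overline\pi$ dropping by at most a polynomial factor along it. Granting this monotone routing, the congestion count and the flip–rate and acceptance estimates are routine, and the analogous statement for $\A_l$ is identical.
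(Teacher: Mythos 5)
Your proposal follows essentially the same route as the paper: you apply a canonical-path Poincaré inequality (Lemma~\ref{Lemma_Poincare_Inequality}) on the aggregated state space indexed by $(s,r)$, route all paths through the unique maximum $a_g$ using the unimodality provided by Theorem~\ref{Theorem_Holger}, and then do polynomial bookkeeping. The paper does the same, with two small cosmetic differences in the bookkeeping: it never bounds the acceptance ratio by a constant (instead it uses that uphill moves are always accepted and appeals to reversibility of $T$ to handle downhill edges, giving $\overline T(\A_{s,r},\A_{s',r'})\ge \tfrac{1}{4N}\min(1,\tfrac{\pi_i(\A_{s',r'})}{\pi_i(\A_{s,r})})$), and it uses the trivial path-length bound $|\gamma|\le N^2$ rather than $O(N)$, after which it works directly with $\pi_i(\A_{s,r})$ rather than with the Stirling approximation to $f_\beta$, so there is no $\log N$ slack to absorb and the constant $\tfrac14$ comes out exactly. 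Your closing worry about whether the analytic nondecreasing path to $a_g$ can actually be realised by single-flip moves on the type lattice inside $\A_g$ is a fair one, and the paper handles it only at the level you do (``we can easily choose one path $\gamma_{xy}$ \dots that is unimodal''), so you are not missing anything the paper supplies.
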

\begin{proof}
 This is already well prepared. As constructed earlier, 
 $f_\beta$ fulfills an unimodality condition on $\A_g$. Thus we can easily choose one path $\gamma_{xy}$ for any given set $x$ and $y$ that is unimodal. Each such path has at most length $N^2$, such that the Poincar\'e inequality given in Lemma \ref{Lemma_Poincare_Inequality} simplifies to
\begin{align}\label{Calculation_K}
A&=\max_{\langle \A_{s,r},\A_{s',r'}\rangle} \frac{1}{\pi_i(\A_{s,r})\overline{T}(\A_{s,r},\A_{s',r'})}\sum_{\gamma_{z_1 z_2}\ni \langle \A_{s,r},\A_{s',r'}\rangle} |\gamma_{z_1 z_2}| \pi_i(z_1)\pi_i(z_2)\notag\\
 &\leq N^2 \max_{\langle \A_{s,r},\A_{s',r'}\rangle} \frac{1}{\pi_i(\A_{s,r})\overline{T}(\A_{s,r},\A_{s',r'})}\sum_{\gamma_{z_1 z_2}\ni \langle \A_{s,r},\A_{s',r'}\rangle} \pi_i(z_1)\pi_i(z_2)\notag\\
 &= N^2 \max_{\langle \A_{s,r},\A_{s',r'}\rangle} \sum_{\gamma_{z_1 z_2}\ni \langle \A_{s,r},\A_{s',r'}\rangle} \frac{\pi_i(z_1)\pi_i(z_2)}{\pi_i(\A_{s,r})\overline{T}(\A_{s,r},\A_{s',r'})} \notag\\
\end{align}
It is now of interest, how $\overline T$ behaves. Given $\A_{s,r}\neq \A_{s',r'}$ with $\overline T(A_{s,r},\A_{s',r'})>0$, we first consider the case $\pi_i(\sigma)\leq \pi_i(\sigma')$ for $\sigma\in\A_{s,r}$ and $\sigma'\in\A_{s',r'}$. Note that $\pi_i(\sigma)$ is independent of the choice of $\sigma \in\A_{s,r}$.
\begin{align}
 \overline T(\A_{s,r},\A_{s',r'})&=\frac{1}{\pi_i(\A_{s,r})} \sum_{\sigma\in\A_{s,r}}\sum_{\sigma'\in\A_{s',r'}} \pi_i(\sigma) T(\sigma,\sigma')\notag\\
 &= \frac{1}{\pi_i(\A_{s,r})} \sum_{\sigma\in\A_{s,r}}\sum_{\sigma'\in\A_{s',r'}} \pi_i(\sigma) \frac{1}{4N}\notag\\
&\geq \frac{1}{4N} \frac{1}{\pi_i(\A_{s,r})} \sum_{\sigma\in\A_{s,r}} \pi_i(\sigma)\notag\\
&= \frac{1}{4N}\notag
\end{align}
The second case $\pi_i(\sigma)>\pi_i(\sigma')$ uses The reversibility of $T$ together with
\begin{align}
 \overline T(\A_{s,r},\A_{s',r'})&=\frac{1}{\pi_i(\A_{s,r})} \sum_{\sigma\in\A_{s,r}}\sum_{\sigma'\in\A_{s',r'}} \pi_i(\sigma) T(\sigma,\sigma')\notag\\
 &=\frac{1}{\pi_i(\A_{s,r})} \sum_{\sigma\in\A_{s,r}}\sum_{\sigma'\in\A_{s',r'}} \pi_i(\sigma') T(\sigma',\sigma)\notag\\
 &=\frac{1}{4N} \frac{1}{\pi_i(\A_{s,r})} \sum_{\sigma'\in\A_{s',r'}}\sum_{\sigma\in\A_{s,r}} \pi_i(\sigma')\notag\\
 &\geq \frac{1}{4N} \frac{\pi_i(\A_{s',r'})}{\pi_i(\A_{s,r})}.\notag
\end{align}
To further analyze \eqref{Calculation_K} we will take the worst case scenario $\frac{\pi_i(\A_{s',r'})}{\pi_i(\A_{s,r})}<1$ and for inequality \eqref{Calculation_K_2} recall that all paths are unimodal:
\begin{align}
 A&\leq N^2 \max_{\langle \A_{s,r},\A_{s',r'}\rangle} \sum_{\gamma_{z_1 z_2}\ni \langle \A_{s,r},\A_{s',r'}\rangle} \frac{\pi_i(z_1)\pi_i(z_2)}{\pi_i(\A_{s,r})\overline{T}(\A_{s,r},\A_{s',r'})} \notag\\
 &\leq  4N^3 \max_{\langle \A_{s,r},\A_{s',r'}\rangle} \sum_{\gamma_{z_1 z_2}\ni \langle \A_{s,r},\A_{s',r'}\rangle} \frac{\pi_i(z_1)\pi_i(z_2)}{\pi_i(\A_{s,r})} \frac{\pi_i(\A_{s,r})}{\pi_i(\A_{s',r'})} \notag\\
 &=4N^3 \max_{\langle \A_{s,r},\A_{s',r'}\rangle} \sum_{\gamma_{z_1 z_2}\ni \langle \A_{s,r},\A_{s',r'}\rangle} \frac{\pi_i(z_1)}{\pi_i(\A_{s,r})} \frac{\pi_i(z_2)}{\pi_i(\A_{s',r'})} \pi_i(\A_{s,r}) \notag\\
 &\leq 4N^5. \label{Calculation_K_2}
\end{align}
\end{proof}
\begin{theorem}\label{Theorem_T^2_fast_on_A_sr}
 $\Gap(T^2_{s,r})\geq \frac{1}{96N^6}e^{-\beta-4K\beta}$.
\end{theorem}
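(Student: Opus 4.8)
The plan is to use that $\pi_i$ is uniform on each set $\A_{s,r}$, since $H(\sigma)=-r+\tfrac KN s^2$ depends on $\sigma$ only through $(S_N,R_N)=(s,r)$, so that $\A_{s,r}$ may be identified with the colourings of $\{1,\dots,N\}$ by $\{-1,0,1\}$ having the prescribed class sizes $\tfrac{r-s}{2},\,N-r,\,\tfrac{r+s}{2}$. A single Metropolis step of $T=P_i|_{\tr_i^{-1}(t)}$ flips exactly one spin, hence always changes $S_N$ and in particular leaves $\A_{s,r}$; so $T^2_{s,r}$ is genuinely a two--step chain, whose only non--lazy moves send $\sigma$ to $\sigma'=(j\,k)\sigma$, the configuration obtained by interchanging two coordinates $j,k$ with $\sigma_j\ne\sigma_k$. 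I would then prove the bound by comparing $T^2_{s,r}$, on the complete graph $K_N$, with the colouring chain analysed in the appendix (which proposes random colour swaps) and whose spectral gap is bounded there below by an inverse power of $N$, paying a temperature--dependent factor $e^{-\beta(1+4K)}$ along the way.

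The key local estimate is that, for transposition--adjacent $\sigma,\sigma'\in\A_{s,r}$, one has $T^2_{s,r}(\sigma,\sigma')\ge\tfrac1{16N^2}e^{-\beta(1+4K)}$. To see this, realise the transposition through the intermediate configuration $\tau$ obtained from $\sigma$ by flipping the single spin at $j$ from $u:=\sigma_j$ to $v:=\sigma_k$; then $\tau$ differs from $\sigma$ and from $\sigma'$ in exactly one coordinate, $H(\sigma')=H(\sigma)$, and
\begin{align*}
 T^2_{s,r}(\sigma,\sigma') &\ \ge\ T(\sigma,\tau)\,T(\tau,\sigma')\\
 &\ =\ \frac{1}{16N^2}\,\min\bigl(1,e^{\beta(H(\tau)-H(\sigma))}\bigr)\min\bigl(1,e^{-\beta(H(\tau)-H(\sigma))}\bigr)\\
 &\ =\ \frac{1}{16N^2}\,e^{-\beta|H(\tau)-H(\sigma)|},
\end{align*}
using $\min(1,e^x)\min(1,e^{-x})=e^{-|x|}$. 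A one--spin flip changes $R_N$ by at most $1$ and $\tfrac KN S_N^2$ by at most $4K$ (up to an $O(1/N)$ term absorbed into the constants), so $|H(\tau)-H(\sigma)|\le 1+4K$; since $\beta_i\le\beta$ the claimed inequality follows. One may flip $j$ or $k$ first and keep whichever intermediate is more convenient, which will matter below.

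Given this, I would transfer the known gap of the colouring chain. Both $T^2_{s,r}$ and the reference chain are reversible with respect to the uniform measure on $\A_{s,r}$; each swap move of the reference chain has probability of order $N^{-2}$, while the corresponding move of $T^2_{s,r}$ has probability at least $\tfrac1{16N^2}e^{-\beta(1+4K)}$. A Dirichlet--form comparison (Lemma~\ref{Lemma_Compare_Derichlet_forms}, or Lemma~\ref{DSCComparison} when a reference move has to be emulated by a short path of $T^2_{s,r}$--moves) then gives $\Gap(T^2_{s,r})\ge c\,N^{-\ell}e^{-\beta(1+4K)}\Gap(\text{reference})$ for explicit $c,\ell$; inserting the appendix bound and tracking the powers of $N$ and the absolute constant yields $\tfrac1{96N^6}e^{-\beta-4K\beta}$. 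Equivalently, one can run canonical paths directly inside $T^2_{s,r}$, sorting one configuration into another by $O(N)$ transpositions and bounding the congestion via the Poincar\'e inequality of Lemma~\ref{Lemma_Poincare_Inequality}; the arithmetic is the same.

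The step I expect to be the real obstacle is making the local estimate legitimate within the restricted state space: the intermediate $\tau$ lies in a neighbouring set $\A_{s',r'}$, and for $T(\sigma,\tau)\ne0$ one needs $\A_{s',r'}\subseteq\A_g$ (respectively $\A_l$); the truth of the theorem actually forces such moves to exist, since otherwise $T^2_{s,r}$ could fail to be irreducible on $\A_{s,r}$. Here I would invoke Theorem~\ref{Theorem_Holger} together with the construction \eqref{Temperatur_dependant_partitioning_2}--\eqref{Temperatur_dependant_partitioning_3} of $\A_g$ as the discretised basin of $a_g$: for a set $\A_{s,r}\subseteq\A_g$ lying in the bulk of this basin both candidate intermediates are in $\A_g$, and for the few sets adjacent to the separating path $\mathcal{V}$ one routes the problematic transpositions through a third coordinate carrying a $0$--spin (so that only the neighbouring sets $\A_{s\pm1,r\pm1}$ enter), still inside $\A_g$. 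This yields a connected generating family of transposition moves realised entirely within $\A_g$, at the cost of only polynomial--in--$N$ extra factors, which is the source of the generous exponent $N^6$.
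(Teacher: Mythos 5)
Your proposal follows the paper's proof essentially step for step: compare $T^2_{s,r}$ to the colour-swap chain of Appendix \ref{rapid_mixing_on_three_colorings}, bound the two-step transition probability through a one-spin-flip intermediate $\tau$ by $\frac{1}{16N^2}e^{-\beta(1+4K)}$ using $|H(\tau)-H(\sigma)|\le 1+4K$, and finish with the Dirichlet-form comparison of Lemma~\ref{Lemma_Compare_Derichlet_forms}; your observation that $\min(1,e^x)\min(1,e^{-x})=e^{-|x|}$ (legitimate here since $H(\sigma)=H(\sigma')$) streamlines the paper's ``one of the two steps has probability exactly $\tfrac1{4N}$, the other pays the full penalty'' accounting but is the same estimate.

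The concern you raise in your last paragraph — that the intermediate $\tau$ lies in a neighbouring $\A_{s',r'}$ and must stay inside $\A_g$ (resp.\ $\A_l$) for the restricted chain $T=P_i|_{\tr_i^{-1}(t)}$ to accept the flip — is a genuine subtlety that the paper's proof does not address at all; it silently treats $T$ as the unrestricted Metropolis kernel when it writes $T(\sigma,\tau)=\tfrac1{4N}\min\{1,\dots\}$. Your proposed patch (choose whichever of the two candidate intermediates stays in $\A_g$, and near the separating path $\mathcal V$ route the transposition via a third coordinate carrying a $0$-spin, tolerating longer canonical paths and hence extra powers of $N$) is a reasonable repair, but as stated it remains heuristic: you would still need to argue from \eqref{Temperatur_dependant_partitioning_2}--\eqref{Temperatur_dependant_partitioning_3} and the geometry of $\mathcal V$ that every transposition inside a boundary cell $\A_{s,r}\subseteq\A_g$ admits at least one intermediate (possibly after rerouting) that does not cross into $\A_l$. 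So: same approach as the paper, correct local computations, plus an honest identification of a gap the paper itself leaves open, with a plausible but unfinished fix.
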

\begin{proof}
  We need to consider two cases. The first is $\A_{N,N}$ in which case $|\A_{N,N}|=1$, such that $T^2_{N,N}$ is the constant chain, and therefore rapidly mixing. The other case is $\A_{s,r}$ with $s\leq \min\{r,N-1\}$. Let $\sigma,\sigma'\in\A_{s,r}$ with $\sigma\neq \sigma'$. We will compare $T^2_{s,r}$ with the Markov chain $(X_i)_i$ given in Appendix \ref{rapid_mixing_on_three_colorings}. Assume $(j,k)\sigma=\sigma'$ for some $j,k\in\{1,...,N\}$. Otherwise $T^2_{s,r}(\sigma,\sigma')=\P\big(X_{i+1}=\sigma'\big|X_i=\sigma\big)=0$. We know
$$\P\big(X_{i+1}=\sigma'\big|X_i=\sigma\big)=\frac{1}{N^2}$$
and
$$T^2_{s,r}(\sigma,\sigma')\geq T(\sigma,\tau)T(\tau,\sigma')$$
for a fixed $\tau$. It is obvious that either $T(\sigma,\tau)=\frac{1}{4N}$ or $T(\tau,\sigma')=\frac{1}{4N}$. Due to the symmetry assume
$$\tau:=(\sigma_1,...,\sigma_{j-1},\sigma_k,\sigma_{j+1},...,\sigma_k,...\sigma_N)$$
and conclude
\begin{align}
 T(\sigma,\tau)&=\frac{1}{4N}\min\Big\{ 1,\frac{e^{\beta(N-R(\tau))-\frac{\beta K}{N} S^2(\tau)}} {e^{\beta(N-r)-\frac{\beta K}{N}s^2}} \Big\}\notag\\
&=\frac{1}{4N}\min\Big\{ 1,e^{\beta(r-R(\tau))+\frac{\beta K}{N}\big(s^2- S^2(\tau)\big)} \Big\}\notag\\
&=\frac{1}{4N}\min\Big\{ 1,e^{\beta(r-R(\tau))+\frac{\beta K}{N}\big(s-S(\tau)\big)\big(s+ S(\tau)\big)} \Big\}\notag\\
&\geq \frac{1}{4N} e^{-\beta-4K\beta}\notag
\end{align}
such that taking $\tau=(\sigma_1,...,\sigma_{j-1},\sigma_k,\sigma_{j+1},...\sigma_N)$, where,  without loss of generality, $\sigma_k>\sigma_j$ yields
$$T^2_{s,r}(\sigma,\sigma')\geq \frac{1}{16N^2}e^{-\beta-4K\beta}.$$

And we can easily deduce from Lemma \ref{Coupling_Lemma} that
$\Gap(X)\geq \frac 1{6 N^4}$ (see \cite{LevinPeresWilmer} for instance).
Then Lemma \ref{Lemma_Compare_Derichlet_forms}
  proves the claim.
\end{proof}

\section{Proof of Theorem \ref{Kintermediate} }\label{Subsection-Case_Klow<K<Kc}
In this section we will prove Theorem \ref{Kintermediate}, which concerns the case $\Klow<K<K_c$. This is done in three parts. We first give the general idea, why slow mixing should be expected. We then support this idea with the necessary calculations in the remaining parts.

\subsection{The idea}
We will follow Gore and Jerrum \cite{GoreJerrumSW} in order to find a bad cut in the state space of BEG for $\beta>\beta_c^{(1)}(K)$. Using their technique we can show, that the Metropolis chain has to overcome an exponential barrier to leave any local maximum. We will show, that an $\epsilon$-stripe around the $0$-axis contains such a maximum, with $\epsilon$ independent of $\beta_i$. Intuitively speaking this leads to the following behavior of the Tempering chain. At $\beta_i$ close to $0$ the chain will find the unique global maximum on the $0$-axis. As of now the tempering chain is trapped in this $\epsilon$-stripe, as Ellis et al. \cite{EllisOttoTouchetteBEG} show a discontinuous behavior of the global maximum as $\beta_i$ passes through $\beta_c^{(1)}(K)$. Thus the chain will never get the chance to leave this $\epsilon$ stripe within polynomial time at any temperature, even though, at low temperature, this stripe has exponentially little mass.
\subsection{One bad cut for BEG's Metropolis chain}
Following the idea stated earlier, we show the existence of a bad cut within close proximity to the $0$-axis in the two-phase region. It is well known, due to Ellis et al. \cite{EllisOttoTouchetteBEG}, that
\begin{equation}
 a_{\max}(0):=\left(\frac{e^{-\beta}}{1+2e^{-\beta}},\frac{1}{1+2e^{-\beta}},\frac{e^{-\beta}}{1+2e^{-\beta}}\right)\in \Upsilon_\infty \label{a_max}
\end{equation}
is the unique global maximum for $\beta<\beta_c^{(1)}(K)$ and a local, non-global, maximum for $\beta>\beta_c^{(1)}(K)$.
Here
\begin{equation}\label{upsiloninfty}
 \Upsilon_\infty :=\{(a_{-1}, a_0, a_1) \in \R_+^3: \sum_i a_i=1\}
\end{equation}
is the set of all probability measures on three points.
They further show, that the phase transition for fixed $K$ at $\beta_c^{(1)}(K)$ is discontinuous, thereby granting us, uniformly in $\beta$, the existence of an $\epsilon>0$ such that
\begin{equation}
 \mathcal{N}:=\big\{\sigma\big||S_N(\sigma)|\leq N\cdot\epsilon\big\}\label{N_central}
\end{equation}
contains only this local maximum, and $f_\beta$ restricted to $B_\epsilon(a_{\max}(0))$ is unimodal for all $\beta>0$. It is even possible to show $f_\beta$ restricted to $\mathcal{N}$ to be unimodal for all $\beta$, see Lemma \ref{N_is_unimodal} for details.

Recalling Section \ref{section_Results} we have
\begin{align}
 \pi_\beta(\sigma \text{ has type } N\cdot\textbf{a})&=\frac{1}{ZN} e^{N\Big(\beta\big(K(a_{-1}-a_{1})^2-a_1-a_{-1}\big)-\sum_{i=-1}^{1}a_i\log a_i\Big)+\Delta(\textbf{a})}\notag\\
&=\frac{1}{ZN} e^{N f_\beta(\textbf{a})+\Delta(\textbf{a})}.\label{function_f}
\end{align}
which implies, that every local maximum of $f_\beta$ yields a locally exponential structure in $\pi$. This leads to exponentially low conductance $\Phi_\mathcal{N}$ for all $\beta>\beta_c^{(1)}(K)$, thereby implying slow mixing of the Metropolis Algorithm in this regime.

\subsection{The bad cut for BEG's Simulated Tempering chain}
Having low conductance $\Phi_\mathcal{N}$ for any $\beta>\beta_c^{(1)}(K)$ using the Metropolis Algorithm it is easy to generalize this to the Simulated Tempering chain. To this end define
\begin{equation}
 \mathcal{N}_{\text{edge}}:=\big\{\sigma\big|N\epsilon-1\leq|S_N(\sigma)|\leq N\cdot\epsilon\big\}\label{N_edge}
\end{equation}
and get
\begin{theorem}\label{Metropolis_Low_Conductance}
 Let $\mathcal{N}$ and $\mathcal{N}_{\text{edge}}$ be defined as in \eqref{N_central} and \eqref{N_edge}. For $\Klow< K< K_c$ and any $\beta\geq 0$,
there exists an $\epsilon>0$ such that for sufficiently large $N$,
\begin{equation}
 \frac{\pi_\beta(\mathcal{N}_{\text{edge}})}{\pi_\beta(\mathcal{N})}\leq e^{-c N}
\end{equation}
holds, with $c>0$ only depending on $K$.
\end{theorem}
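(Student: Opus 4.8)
The plan is to run the large-deviation picture \eqref{function_f} against the unimodality of $f_\beta$ on $\mathcal N$ (Lemma \ref{N_is_unimodal}). A configuration $\sigma$ lies in $\mathcal N$ (resp.\ $\mathcal N_{\mathrm{edge}}$) precisely when its type $\mathbf a=(a_{-1},a_0,a_1)$, for which $a_1-a_{-1}=S_N(\sigma)/N$, satisfies $|a_1-a_{-1}|\le\epsilon$ (resp.\ $\epsilon-\tfrac1N\le|a_1-a_{-1}|\le\epsilon$); both sets are unions of type-classes. I would first fix $\epsilon>0$ small enough that $f_\beta|_{\mathcal N}$ is unimodal with unique maximum $a_{\max}(0)$ for every $\beta$ (possible by the discussion around \eqref{N_central} and Lemma \ref{N_is_unimodal}) and, in addition, $\epsilon<1/(2K)$ --- which is compatible with smallness since $K<K_c$.

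For the numerator I would use only the elementary multinomial bound $\binom{N}{Na_{-1},Na_0,Na_1}\le e^{-N\sum_i a_i\log a_i}$, which inserted into \eqref{Eq4.6} gives $\pi_\beta(\sigma\text{ has type }N\mathbf a)\le Z(\beta)^{-1}e^{Nf_\beta(\mathbf a)}$ with \emph{no} Stirling remainder to control. Since there are only $O(N^2)$ types and every type meeting $\mathcal N_{\mathrm{edge}}$ has $\epsilon/2\le|a_1-a_{-1}|\le\epsilon$ once $N\ge2/\epsilon$, this yields
\[
\pi_\beta(\mathcal N_{\mathrm{edge}})\le O(N^2)\,Z(\beta)^{-1}e^{N M_1(\beta)},\qquad
M_1(\beta):=\max\bigl\{f_\beta(\mathbf a):\mathbf a\in\Upsilon_\infty,\ \tfrac\epsilon2\le|a_1-a_{-1}|\le\epsilon\bigr\}.
\]
For the denominator I would keep the single type $\mathbf a^\ast$ closest to $a_{\max}(0)$: for $\beta$ fixed its coordinates are bounded below by a positive constant and $|\mathbf a^\ast-a_{\max}(0)|=O(1/N)$, so genuine Stirling gives $\binom{N}{N\mathbf a^\ast}\ge cN^{-1}e^{-N\sum_i a^\ast_i\log a^\ast_i}$ and local Lipschitzness of $f_\beta$ gives $f_\beta(\mathbf a^\ast)\ge f_\beta(a_{\max}(0))-O(1/N)$, whence $\pi_\beta(\mathcal N)\ge cN^{-1}Z(\beta)^{-1}e^{NM_0(\beta)-O(1)}$ with $M_0(\beta):=f_\beta(a_{\max}(0))$. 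The factors $Z(\beta)$ cancel in the ratio and
\[
\frac{\pi_\beta(\mathcal N_{\mathrm{edge}})}{\pi_\beta(\mathcal N)}\le C N^3\, e^{-N\bigl(M_0(\beta)-M_1(\beta)\bigr)+O(1)}.
\]

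It remains to show $g(\beta):=M_0(\beta)-M_1(\beta)$ is bounded below by a constant depending only on $K$; this is the one genuinely non-mechanical step, and I expect it to be the main obstacle. For each fixed $\beta$, the set $\{\mathbf a:\epsilon/2\le|a_1-a_{-1}|\le\epsilon\}$ is compact and avoids $a_{\max}(0)$, and $a_{\max}(0)$ is the \emph{unique} maximizer of $f_\beta$ on $\mathcal N$ (Lemma \ref{N_is_unimodal}, Theorem \ref{Theorem_Holger}), so $g(\beta)>0$; moreover $g$ is continuous in $\beta$, hence bounded below by a positive constant on any compact $\beta$-range. For large $\beta$ one gets this explicitly: a short computation gives $M_0(\beta)=f_\beta(a_{\max}(0))=\log(1+2e^{-\beta})\ge0$, while on the edge $-(a_{-1}+a_1)+K(a_1-a_{-1})^2\le-\tfrac\epsilon2+K\epsilon^2<0$ (here $\epsilon<1/(2K)$ enters) and $-\sum_i a_i\log a_i\le\log3$, so $M_1(\beta)\le\beta\bigl(-\tfrac\epsilon2+K\epsilon^2\bigr)+\log3\to-\infty$ and thus $g(\beta)\to+\infty$.

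Combining the two regimes, $g_K:=\inf_{\beta\ge0}g(\beta)>0$ depends only on $K$ (through the choice $\epsilon=\epsilon(K)$), and taking any $c<g_K$ absorbs the polynomial prefactor $CN^3$ for all sufficiently large $N$ (the threshold and the hidden $O(1)$ may depend on the fixed $\beta$, but the exponential rate $c$ does not), which is exactly the assertion. Once the uniformity of $g(\beta)$ is split into a compact part (handled by continuity and the uniqueness of the maximizer) and a tail part (handled by the explicit estimate above), everything else is the routine marriage of Stirling's formula with Lemma \ref{N_is_unimodal}.
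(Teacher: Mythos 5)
Your proof is correct and follows the same large-deviation/unimodality strategy as the paper's (rather terse) argument; the differences are matters of rigor rather than route. Two points are worth recording. First, for the numerator you replace the Stirling expansion with $\Delta(\mathbf a)$ error term by the one-sided multinomial inequality $\binom{N}{Na_{-1},Na_0,Na_1}\le e^{-N\sum_i a_i\log a_i}$, which is cleaner because the Stirling remainder is only controlled when all $a_i$ are bounded away from the boundary, a condition types in $\mathcal N_{\mathrm{edge}}$ need not satisfy; you keep genuine Stirling only for the single interior type $\mathbf a^\ast$, where it is safe. Second, and more substantively, the uniformity of the gap $g(\beta)=M_0(\beta)-M_1(\beta)$ in $\beta$ is the real content of the theorem, and the paper disposes of it in one sentence by invoking the discontinuity at $\beta_c^{(1)}(K)$ together with smoothness of $f_\beta$. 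Your two-regime argument --- continuity of $g$ plus strict positivity for each $\beta$ on a compact initial segment, combined with the explicit tail bound $M_1(\beta)\le\beta(-\epsilon/2+K\epsilon^2)+\log 3\to-\infty$ and the computation $M_0(\beta)=\log(1+2e^{-\beta})\ge 0$ --- makes this step quantitative and is a genuine improvement in transparency, at the mild price of the extra constraint $\epsilon<1/(2K)$ (harmless, since Lemma \ref{N_is_unimodal} permits any sufficiently small $\epsilon$). The appeal to Lemma \ref{N_is_unimodal} to identify $a_{\max}(0)$ as the unique maximizer of $f_\beta$ on $\mathcal N$ for every $\beta$, which is what makes $g(\beta)>0$ pointwise, is exactly the role the paper assigns it, and your explicit acknowledgement that the Stirling prefactor and the threshold in $N$ may depend on $\beta$ (but the rate $c$ does not) correctly tracks what the theorem actually claims and what its downstream use over the compact range $\{\beta_i\}_{i\le M}$ requires.
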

\begin{proof}
Recall equation \eqref{function_f}
$$ \pi_\beta(\sigma \text{ has type } N\cdot\textbf{a})=\frac{1}{ZN} e^{N f_\beta(\textbf{a})+\Delta(\textbf{a})}$$
and verify that there are only polynomially (in $N$) many $\textbf{a}\in\Upsilon$ which satisfy $N\cdot \textbf{a}\in\mathcal{N}_{\text{edge}}$. Then, considering
$$f_\beta(\textbf{a})=\beta\big(K(a_{-1}-a_{1})^2-a_1-a_{-1}\big)-\sum_{i=-1}^{1}a_i\log a_i$$
and the results presented by Ellis et al. \cite{EllisOttoTouchetteBEG} it is clear, that $f_\beta$ has a local maximum at $a_{\max}(0)$ (see equation \eqref{a_max}). Due to $f_\beta$ being smooth in $a_{\max}$ it is clearly possible to find an $\epsilon>0$ such that $f_\beta$ is unimodal on $B_\epsilon(a_{\max})$. Due to the discontinuous behavior of the system at $\beta_c^{(1)}(K)$ for $K\in(\Klow,K_c)$ and as $f_\beta(\textbf{a})$ is smooth in all variables, including $\beta$, this $\epsilon$ can be chosen uniform in $\beta$.

Combining this with the exponential structure of \eqref{function_f} leads to the desired result
$$\frac{\pi_\beta(\mathcal{N}_{\text{edge}})}{\pi_\beta(\mathcal{N})}\leq e^{-c N}$$
with $c$ depending only on $K$ and sufficiently large $N$.
\end{proof}
This is the main ingredient for this section's main
\begin{theorem}
 Define $\mathcal{N}$ and $\mathcal{N}_{\text{edge}}$ as in Theorem \ref{Metropolis_Low_Conductance}. For $\Klow< K< K_c$ and  $\beta>\beta_c^{(1)}(K)$, let $\beta_i = \frac iM \beta$ for $i=0,\ldots, M$. Then for the Simulated Tempering Markov chain, the set
$$\mathcal{S}:=\{(x,i) | \ x\in\mathcal{N}, i=0,\ldots,M\}$$
satisfies $\Phi_{\mathcal{S}}\leq e^{-c N}$ with $c>0$.
\end{theorem}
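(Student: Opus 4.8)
The plan is to bound the ergodic flow out of $\mathcal S$ directly and divide by the appropriate normalization. Write the Simulated Tempering transition matrix as $QP_{st}Q$. Each temperature sub-step $Q$ acts only on the temperature index and leaves the configuration component of a state $(x,i)$ unchanged; hence a transition $(x,i)\to(y,j)$ with $x\in\mathcal N$ and $y\notin\mathcal N$ can only occur through the middle Metropolis sub-step $P_{st}$, which at a fixed index $k$ acts by $T_{\beta_k}$. Since the base chain $\mathcal K_{gen}$ changes exactly one spin, $S_N$ changes by at most $2$ between $x$ and $y$; as $|S_N(y)|>N\epsilon\ge|S_N(x)|$, this forces $x$ to lie in the bounded-width boundary layer, i.e.\ $x\in\mathcal N_{\text{edge}}$. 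Therefore, using that $QP_{st}Q$ is stochastic,
\[
\sum_{(x,i)\in\mathcal S}\ \sum_{(y,j)\notin\mathcal S}\pi\big((x,i)\big)\,\big(QP_{st}Q\big)\big((x,i),(y,j)\big)\ \le\ \sum_{i=0}^{M}\ \sum_{x\in\mathcal N_{\text{edge}}}\pi\big((x,i)\big)\ =\ \frac{1}{M+1}\sum_{i=0}^{M}\pi_{\beta_i}(\mathcal N_{\text{edge}}).
\]

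\textbf{Applying Theorem \ref{Metropolis_Low_Conductance}.} Because $\Klow<K<K_c$ and the phase transition at $\beta_c^{(1)}(K)$ is discontinuous, the threshold $\epsilon>0$ in the definitions of $\mathcal N$ and $\mathcal N_{\text{edge}}$ may be chosen uniformly in $\beta$, so Theorem \ref{Metropolis_Low_Conductance} yields a single constant $c=c(K)>0$ with $\pi_{\beta_i}(\mathcal N_{\text{edge}})\le e^{-cN}\pi_{\beta_i}(\mathcal N)$ for every $i\in\{0,\dots,M\}$ and all large $N$. Summing over $i$ and recalling $\pi(\mathcal S)=\frac1{M+1}\sum_i\pi_{\beta_i}(\mathcal N)$, the flow above is at most $e^{-cN}\pi(\mathcal S)$. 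Dividing by $\pi(\mathcal S)$ (or by $\min(\pi(\mathcal S),\pi(\mathcal S^c))$, according to the convention in use) gives $\Phi_{\mathcal S}\le e^{-c'N}$: indeed $\pi(\mathcal S)$ is bounded below by a positive constant, since for a positive fraction of indices $i$ the temperature $\beta_i$ is small and $\pi_{\beta_i}(\mathcal N)\to1$, while $\pi(\mathcal S^c)$ is likewise bounded below because $\beta>\beta_c^{(1)}(K)$ forces a positive fraction of indices to satisfy $\beta_i>\beta_c^{(1)}(K)$, at which temperatures $\pi_{\beta_i}(\mathcal N)$ is exponentially small (the two equilibrium macrostates are bounded away from the $0$-axis).

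\textbf{Main obstacle.} The computation is essentially bookkeeping; the genuine input is Theorem \ref{Metropolis_Low_Conductance} together with the $\beta$-uniformity of $\epsilon$, which itself rests on the discontinuity of the first-order transition. The step needing care is the first one: one must argue rigorously that the product structure $QP_{st}Q$ forces every exit from $\mathcal S$ to pass through a single-spin Metropolis update originating in $\mathcal N_{\text{edge}}$ --- in particular that the two (harmless) temperature moves cannot combine into a configuration move, and that no single step can jump over the whole boundary layer $\mathcal N_{\text{edge}}$.
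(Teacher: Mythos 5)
Your proof is correct and follows essentially the same route as the paper. Both arguments reduce the conductance to the ratio $\sum_i\pi_{\beta_i}(\mathcal{N}_{\text{edge}})\,/\,\sum_i\pi_{\beta_i}(\mathcal{N})$ by noting that flow out of $\mathcal{S}$ must originate in $\mathcal{N}_{\text{edge}}$ and then bounding the transition probabilities trivially by one, after which Theorem~\ref{Metropolis_Low_Conductance} (with the $\beta$-uniform $\epsilon$ supplied by the first-order discontinuity) finishes the estimate term by term. Your write-up actually makes explicit the point the paper leaves implicit --- that the two $Q$-steps cannot move the configuration, so a single-spin Metropolis move must carry $x$ from inside $\mathcal{N}$ to outside, forcing $x\in\mathcal{N}_{\text{edge}}$ --- and your closing remark about $\pi(\mathcal{S}^c)$ being bounded below mirrors the paper's post-proof observation that one must replace the $\pi(\mathcal{S})\le 1/2$ condition in Theorem~\ref{Conductance_Lemma} by a $\pi(\mathcal{S})\le q<1$ variant.
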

\begin{remark}
For the definition of the conductance $\Phi_{\mathcal{S}}$ of a set $\mathcal{S}$, see Theorem \ref{Conductance_Lemma}.
\end{remark}

\begin{proof}
 \renewcommand{\S}{\mathcal{S}}
 \newcommand{\Nedge}{\mathcal{N}_{\text{edge}}}
 Using Theorem \ref{Metropolis_Low_Conductance} we get
\begin{align}
 \Phi_{\S}&=\frac{\sum_{x\in\S, y\notin\S} \pi(x) QPQ(x,y)}{\pi(\S)}\notag\\
&=\frac{\sum_{\beta_i}\sum_{x\in\Nedge} \pi_i(x) \sum_{x'\in\mathcal{N}^c} QPQ(x,x')}{\sum_{\beta_i}\sum_{x\in\mathcal{N}} \pi_i(x)}\notag\\
&\leq \frac{\sum_{\beta_i}\sum_{x\in\Nedge} \pi_i(x)}{\sum_{\beta_i}\sum_{x\in\mathcal{N}} \pi_i(x)}\notag\\
&= \frac{\sum_{\beta_i} \pi_i(\Nedge)}{\sum_{\beta_i} \pi_i(\Nedge) \frac{\pi_i(\mathcal{N})}{\pi_i(\Nedge)} }\notag\\
&\leq \frac{\sum_{\beta_i} \pi_i(\Nedge)}{e^{c N}\sum_{\beta_i} \pi_i(\Nedge)} \notag\\
&=e^{-c N}\notag
\end{align}

\end{proof}
This concludes the proof of Theorem \ref{Kintermediate} by using a variant of Theorem \ref{Conductance_Lemma} in the appendix: Indeed, we do not have $\pi(\S)\le 1/2$ for all $\beta>\beta_c^{(1)}(K)$, but as an easy extension of Theorem \ref{Conductance_Lemma} one obtains
$$ \Gap(Q P_{st} Q) \le \frac 1 {1-q} \Phi,$$
if we define
$$\Phi  = \min_{\mathcal{S} : \pi(\mathcal{S})\le q} \Phi_\mathcal{S},$$
for some $q\in (0,1)$.

As we chose $\beta > \beta_c^{(1)}(K)$, and set $\beta_i = \frac iM \beta$, for
$i=0, \ldots, M,$ there exists a  $p\in (0,1)$ such that  $\beta_i \le \beta_c^{(1)}(K)$, for $i\le pM$ and $\beta_i > \beta_c^{(1)}(K)$ for $i> pM$. For $\beta_i > \beta_c^{(1)}(K)$, we have $\pi_\beta( \mathcal{N}) \le 1/2$, since $a_{\max}(0)$ is a local maximum, which implies

$$\pi(\S) = \frac1{M+1} \sum_{i=0}^M \pi_{i}(\mathcal{N})    \leq q:=p + \frac{1-p}2  <1.$$

\appendix

\section{General preparations}
In this section we give some fundamental definitions and some well known lemmas on Markov chains from other articles. We state them in this section for the reader's convenience.

\begin{definition}
Let $\A$ be a sigma-field on a set $\Omega$. The total variation distance between two probability measures $\pi$ and $\tau$ on $(\Omega,\A)$ is defined by
$$d(\pi,\tau)_{\text{TV}}:=\sup\big\{|\pi(A)-\tau(A)|\big|A\in\A\big\}.$$
\end{definition}

The fundamental result for all that follows is
\begin{theorem}[Ergodic Theorem for Markov chains]\label{Theorem-ErgodicTheorem}
Let $(X_0,X_1,$ $X_2,...)$ be an irreducible aperiodic Markov chain with state space $\mathcal{S}=\{s_1,...,s_k\}$, transition matrix $P$ and arbitrary initial distribution $\mu^{(0)}$. Then there exists a unique distribution $\pi$ which is stationary for the transition matrix $P$. If $\mu^{(n)}$ denotes the distribution of $X_n$ then
\begin{equation*}
 \mu^{(n)} \xrightarrow{TV} \pi.
\end{equation*}
\end{theorem}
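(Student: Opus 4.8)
The plan is to prove the three assertions — existence of a stationary distribution, uniqueness, and convergence in total variation — by a coupling argument, which yields the convergence directly and from which uniqueness is an immediate byproduct.

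First I would establish \emph{existence} of a stationary law by compactness. Since $\mathcal S$ is finite, the set $\Delta$ of probability measures on $\mathcal S$ is compact. Starting from an arbitrary initial law $\mu$, form the Ces\`aro averages $\nu_n:=\frac1n\sum_{k=0}^{n-1}\mu P^k\in\Delta$. A telescoping identity gives $\nu_n P-\nu_n=\frac1n(\mu P^n-\mu)$, so $d(\nu_n P,\nu_n)_{\mathrm{TV}}\le\frac1n$. Any subsequential limit $\pi$ of $(\nu_n)$ — one exists by compactness of $\Delta$ — therefore satisfies $\pi P=\pi$.

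Next I would record the key consequence of the two hypotheses: there is an $m\in\N$ with $P^m(x,y)>0$ for \emph{all} $x,y\in\mathcal S$. This is the standard number-theoretic fact — for a fixed state the set of return times is a numerical semigroup with $\gcd=1$ (aperiodicity), hence contains every sufficiently large integer, and irreducibility then connects any two states in a controlled number of steps. I expect this to be the one genuinely technical point; everything else is soft. Then I would run the coupling: let $(X_n)$ and $(Y_n)$ be independent copies of the chain with $X_0\sim\mu^{(0)}$ and $Y_0\sim\pi$ until the meeting time $\tau:=\inf\{n:X_n=Y_n\}$, and set $X_n:=Y_n$ for $n\ge\tau$. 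The marginals remain $\mathrm{law}(X_n)=\mu^{(n)}$ and $\mathrm{law}(Y_n)=\pi$. Before $\tau$ the pair $(X_n,Y_n)$ is a Markov chain on $\mathcal S\times\mathcal S$ with transition matrix $P\otimes P$, and since $(P\otimes P)^m=P^m\otimes P^m$ has all entries strictly positive, from any starting pair the diagonal is reached within $m$ steps with probability bounded below by a fixed constant; hence $\Pro(\tau>n)\to0$ (geometrically).

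The coupling inequality now closes the argument:
$$d(\mu^{(n)},\pi)_{\mathrm{TV}}\le\Pro(X_n\neq Y_n)\le\Pro(\tau>n)\xrightarrow[n\to\infty]{}0,$$
which is the claimed convergence $\mu^{(n)}\xrightarrow{TV}\pi$. For \emph{uniqueness}, if $\pi'$ is any stationary distribution, apply the convergence statement with $\mu^{(0)}=\pi'$: then $\mu^{(n)}=\pi'$ for every $n$, yet $\mu^{(n)}\to\pi$, forcing $\pi'=\pi$. (An alternative route avoiding probabilistic coupling is Perron--Frobenius: irreducibility makes the eigenvalue $1$ of $P$ simple with a strictly positive left eigenvector, aperiodicity makes every other eigenvalue have modulus $<1$, and then $P^n$ converges to the rank-one projection onto $\pi$; there the main obstacle — simplicity of the leading eigenvalue together with a strict spectral gap — is essentially the same combinatorial input as the existence of the strictly positive power $P^m$ above.)
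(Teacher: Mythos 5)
The paper does not prove this theorem at all: it appears in the appendix under the heading ``General preparations,'' which the authors explicitly introduce as a list of ``well known lemmas on Markov chains from other articles \dots stated for the reader's convenience.'' So there is no in-paper argument to compare yours against. Your coupling proof is a correct and standard textbook proof of the finite-state ergodic theorem: the Ces\`aro/compactness step for existence, the number-theoretic observation that irreducibility plus aperiodicity force some power $P^m$ to be strictly positive, the independent-until-meeting coupling with geometric tail for the meeting time $\tau$, the coupling inequality $d(\mu^{(n)},\pi)_{\mathrm{TV}}\le\Pro(\tau>n)$, and uniqueness as a corollary of convergence are all sound. The one step worth flagging as slightly compressed is the assertion that after redefining $X_n:=Y_n$ for $n\ge\tau$ the first marginal remains $\mu^{(n)}$; this does hold (the coalescing coupling kernel has $P$ as both marginals, or one can invoke the strong Markov property at $\tau$), but it deserves a sentence of justification rather than being stated as obvious. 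Your parenthetical Perron--Frobenius alternative is also a valid route and is likewise standard.
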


In general, the definition of stationarity proves complicated to construct or to verify for a given transition matrix $P$ or for a given probability distribution $\pi$. There is the tighter concept of reversibility which, in most cases, is much easier to construct.
\begin{definition}
Let $(X_0,X_1,...)$ be a Markov chain with state space $\mathcal{S}=\{s_1,...,s_k\}$ and transition matrix $P$. A probability distribution $\pi$ on $\mathcal{S}$ is said to be reversible for the chain if for all $x,y\in \mathcal{S}$ we have
$$\pi(x) P(x,y)=\pi(y) P(y,x).$$
The Markov chain is said to be reversible if there exists a reversible distribution for it.
\end{definition}

The key question for all kind of MCMC algorithms is how fast they mix, i.e. how rapidly they converge to the desired invariant measure. So in general, let $(X_n)_{n \ge 0}$ be a homogeneous, irreducible and aperiodic Markov chain on a finite state space $\Omega$, reversible with respect to a probability measure $\pi$ (on $\Omega$, that necessarily charges every point). The speed of convergence is determined in terms of
$$
\tau(\epsilon) = \min\{n: \mathrm{d_{TV}}( \mu^{(n)}, \pi) \le \epsilon\}.
$$
Here, of course, $ \mu^{(n)}$ is the distribution at time $n$ of the Markov chain corresponding to the algorithm and $\mathrm{d_{TV}}(\mu^{(n)}, \pi)$ is the total variation distance between this distribution at time $n$ and the invariant measure $\pi$ of the chain. Rapid convergence of such a MCMC algorithm means that one can bound $\tau(\epsilon)$ by a polynomial in $\epsilon^{-1}$ and the problem size. The algorithm is said to be torpidly mixing if it is not rapidly mixing. There is an intrinsic relationship between $\tau(\epsilon)$ and the spectral gap of the chain defined by
$$
\mathrm{Gap}((X_n)):=\mathrm{Gap}(P) := 1-\max\{|\lambda_i|, \lambda_i \neq 1\}=: 1-|\lambda_1|,
$$
where we write $\lambda_i$ for the eigenvalues of the transition matrix $P=(P(x,y))_{x,y}$ of the chain $(X_n)$ and have $\lambda_1$ denote the second largest eigenvalue. For this define the Dirichlet form of $P$ by
\begin{equation}
 \mathcal{E}(f,f):=\frac{1}{2}\sum_{x,y\in\Omega} |f(x)-f(y)|^2 P(x,y)\pi(x)
\end{equation}
for any function $f:\Omega\to\R$. If we further define
\begin{equation}
 \Var(f):=\E_\pi(f^2)-(\E_\pi f)^2=\frac12\sum_{x,y\in\Omega} |f(x)-f(y)|^2\pi(x)\pi(y)
\end{equation}
it follows that
$$\Gap(P)=\inf \left\{\frac{\mathcal{E}(f,f)}{\Var(f)}\Big|\E_\pi f^2<\infty, \Var(f)\neq 0\right\}.$$

As a matter of fact, for an irreducible and aperiodic chain the following estimates holds true (see e.g. \cite{sinclairbook}): Let $\underline \pi:=\min_x \pi(x)$ (which is non-zero by the ergodic theorem for Markov chains), then
$$
\tau(\epsilon)\le \frac 1 { \mathrm{Gap}(P)} \log (\frac 1 {\underline \pi \epsilon})
$$
as well as
$$
\tau(\epsilon)\ge \frac {|\lambda_1|}{2 \mathrm{Gap}(P)} \log (\frac 1 {2 \epsilon}).
$$
We can thus control the speed of convergence of the Markov chain (or the MCMC algorithm, respectively), if we control the size of the spectral gap of $P$.

\label{AggregatedTransitionMatrix}
\begin{lemma}[Lemma 3 of \cite{MadrasZhengCW}]
Let $P$ be a Markov chain that is reversible with respect to a probability measure $\pi$ on the finite state space $\S$. Also assume that $P(x,x)\geq \frac12$ for every $x\in\S$. Then $P$ is a positive operator.
\end{lemma}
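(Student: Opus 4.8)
The plan is to use reversibility to regard $P$ as a self-adjoint operator on $L^2(\pi)$, and then to exhibit $P$ as a convex combination of the identity and another reversible stochastic matrix whose spectrum lies in $[-1,1]$; this forces $\operatorname{Spec}(P)\subseteq[0,1]$, which is exactly the assertion that $P$ is a positive operator.

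First I would record that reversibility of $P$ with respect to $\pi$, i.e. $\pi(x)P(x,y)=\pi(y)P(y,x)$ for all $x,y\in\S$, says precisely that $P$ is self-adjoint on $\R^{\S}$ equipped with the inner product $\langle f,g\rangle_\pi:=\sum_{x\in\S}\pi(x)f(x)g(x)$; in particular all eigenvalues of $P$ are real, and since $P$ is stochastic they lie in $[-1,1]$.

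Next I would introduce $Q:=2P-I$ and verify that it is again a stochastic matrix. The hypothesis $P(x,x)\ge\tfrac12$ gives $Q(x,x)=2P(x,x)-1\ge0$, one has $Q(x,y)=2P(x,y)\ge0$ for $x\ne y$, and each row of $Q$ sums to $2-1=1$. Detailed balance is linear in the transition matrix and is trivially satisfied by $I$, so $Q$ inherits reversibility with respect to $\pi$ from $P$; hence $Q$ is self-adjoint on $L^2(\pi)$ with $\operatorname{Spec}(Q)\subseteq[-1,1]$, equivalently $\langle f,Qf\rangle_\pi\ge-\|f\|_\pi^2$ for every $f$.

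Finally, since $P=\tfrac12(I+Q)$, for every $f\in\R^{\S}$ one gets $\langle f,Pf\rangle_\pi=\tfrac12\|f\|_\pi^2+\tfrac12\langle f,Qf\rangle_\pi\ge0$, which is exactly the statement that $P$ is a positive operator. I do not expect a genuine obstacle here; the only point where the hypothesis is actually used, and which therefore deserves a line of care, is the check that $Q=2P-I$ has nonnegative diagonal entries, so that it is really a stochastic (and reversible) matrix — everything else is formal.
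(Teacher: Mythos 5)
Your proof is correct. The paper itself does not prove this lemma; it merely cites it (as Lemma~3 of Madras--Zheng) in the appendix of general preparations, so there is no in-paper argument to compare against. Your argument — writing $P=\tfrac12(I+Q)$ with $Q=2P-I$, checking that $Q$ is a reversible stochastic matrix precisely because the hypothesis $P(x,x)\ge\tfrac12$ keeps its diagonal nonnegative, and then reading off $\operatorname{Spec}(P)\subseteq[0,1]$ from $\operatorname{Spec}(Q)\subseteq[-1,1]$ — is the standard proof and coincides with the one in the cited reference.
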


\begin{lemma}[Poincar\'e inequality, Proposition 1' of \cite{DiaconisStrook_GeometricBoundsMC}]\label{Lemma_Poincare_Inequality}
Let $P$ be an irreducible and reversible Markov chain on a finite state space $\S$.
We associate to $P$ the graph with vertex set $\S$ and edges $\langle x,y \rangle$ if and only if $P(x,y)>0$. For each pair of distinct points $x,y\in \S$, we choose a path $\gamma_{xy}$ from $x$ to $y$, such that a given edge appears at most once in a given path.
Then the second largest eigenvalue $\lambda_1$ of $P$ satisfies
$$\lambda_1=1-\Gap(P)\leq 1-\frac1A$$
where
$$A:=\max_{\langle x,y\rangle} \frac{1}{\pi(x)P(x,y)}\sum_{\gamma_{z_1 z_2}\ni \langle x,y\rangle} |\gamma_{z_1 z_2}| \pi(z_1)\pi(z_2)$$
and $|\gamma_{z_1z_2}|$ denotes the number of edges in the path $\gamma_{z_1z_2}$.
\end{lemma}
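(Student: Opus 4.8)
The statement is the classical Diaconis--Stroock (Poincar\'e) bound, and the natural route is through the variational characterisation of the spectral gap recalled just above, $\Gap(P)=\inf\{\mathcal{E}(f,f)/\Var(f):\E_\pi f^2<\infty,\ \Var(f)\neq0\}$. Accordingly, the plan is to prove the single inequality
\[
\Var(f)\ \le\ A\,\mathcal{E}(f,f)\qquad\text{for every }f\colon\S\to\R\text{ with }\Var(f)\neq0,
\]
from which $\Gap(P)\ge 1/A$, i.e. $\lambda_1=1-\Gap(P)\le 1-1/A$, follows at once.

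First I would start from the ``pair form'' $\Var(f)=\tfrac12\sum_{z,w\in\S}|f(z)-f(w)|^2\pi(z)\pi(w)$ given above and, for each ordered pair of distinct states $z,w$, telescope $f$ along the chosen path $\gamma_{zw}=(z=x_0,x_1,\dots,x_\ell=w)$ to write $f(w)-f(z)=\sum_{(x,y)\in\gamma_{zw}}\bigl(f(y)-f(x)\bigr)$, a sum of exactly $|\gamma_{zw}|$ terms. Cauchy--Schwarz then gives $|f(w)-f(z)|^2\le |\gamma_{zw}|\sum_{(x,y)\in\gamma_{zw}}|f(y)-f(x)|^2$.

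Next I would insert this into the pair form of the variance and interchange the order of summation, summing first over directed edges $(x,y)$ with $P(x,y)>0$ and then over the pairs $(z,w)$ whose path traverses that edge:
\[
\Var(f)\ \le\ \tfrac12\sum_{(x,y):\,P(x,y)>0}|f(y)-f(x)|^2\sum_{\gamma_{zw}\ni\langle x,y\rangle}|\gamma_{zw}|\,\pi(z)\pi(w).
\]
By the very definition of $A$, the inner sum is bounded by $A\,\pi(x)P(x,y)$ for every edge, so the right-hand side is at most $A\cdot\tfrac12\sum_{x,y}|f(y)-f(x)|^2P(x,y)\pi(x)=A\,\mathcal{E}(f,f)$, completing the argument.

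The only delicate point --- and the nearest thing to an obstacle --- is the bookkeeping between the directed edges that occur in the telescoped paths and the undirected edges $\langle x,y\rangle$ that index the definition of $A$, together with the standing hypothesis that each edge is used at most once along any path (which is what makes the interchange of summations clean). I would deal with this by observing that whenever the directed edge $(x,y)$ occurs in $\gamma_{zw}$ the undirected edge $\langle x,y\rangle$ occurs in $\gamma_{zw}$, so the sum over pairs whose path uses $(x,y)$ is dominated by the sum appearing in the definition of $A$; reversibility then identifies $\pi(x)P(x,y)$ consistently on the two orientations of the edge. Beyond this, the proof is just finite-sum Fubini and one application of Cauchy--Schwarz.
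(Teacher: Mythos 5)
Your proof is correct and is precisely the standard Diaconis–Stroock argument: telescope along the canonical paths, apply Cauchy–Schwarz with the path-length weight, interchange the sum over pairs and the sum over edges, and invoke the definition of $A$ edge by edge. Note that the paper does not present its own proof of this lemma; it is cited verbatim as Proposition~1$'$ of Diaconis and Stroock, so there is nothing in the paper to compare against beyond the statement itself, and your reconstruction matches the original source's reasoning. Your remark on the directed/undirected edge bookkeeping is exactly the right point to flag: reversibility makes $\pi(x)P(x,y)$ orientation-independent, so the factor $\tfrac12$ in both the pair form of $\Var$ and the Dirichlet form is consistently absorbed, and the inequality $\Var(f)\le A\,\mathcal{E}(f,f)$ follows cleanly for every non-constant $f$, giving $\Gap(P)\ge 1/A$ by the variational characterization recalled just above the lemma.
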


\begin{lemma}[Comparison of Dirichlet forms, Theorem 2.1  of \cite{DiaconisSaloffCosteComparison}]\label{DSCComparison}
Let $P, \pi$  and $\tilde P, \tilde \pi$  be reversible Markov chains on a finite state space $\S$, with respective Dirichlet forms $\mathcal{E}$ and $\tilde{\mathcal{E}}$.
For each pair $x\ne y,$ with $\tilde P(x,y)>0$, we fix a path $\gamma_{xy}=(x_0=x, x_1,x_2, \ldots, x_k=y)$, such that $P(x_i,x_{i+1})>0$, of length $| \gamma_{xy}| =k$. Set
$E=\{ (x,y) : P(x,y) >0\},  \tilde E=\{ (x,y) : \tilde P(x,y) >0\}$ and $\tilde E(e)=\{ (x,y)\in \tilde E : e \in \gamma_{xy}\},$ where $e\in E$.
Then
$$\tilde{\mathcal{E}} \le A {\mathcal{E}},$$
where
$$A:=\max_{ (z,w)\in E} \frac{1}{\pi(z)P(z,w)}\sum_{\tilde E(z,w)} |\gamma_{xy}| \tilde\pi(x)\tilde P(x,y).$$
\end{lemma}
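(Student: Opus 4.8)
The plan is to reduce everything to a single pointwise estimate: I will show that $\tilde{\mathcal{E}}(f,f)\le A\,\mathcal{E}(f,f)$ for every $f:\S\to\R$, which, since $A$ does not depend on $f$, is precisely the claimed domination $\tilde{\mathcal{E}}\le A\,\mathcal{E}$ of quadratic forms (note that no comparison of the stationary measures $\pi$ and $\tilde\pi$ is claimed here, so this is all that is needed). Fix such an $f$. For each ordered pair $(x,y)$ with $\tilde P(x,y)>0$, write the telescoping identity along the fixed path $\gamma_{xy}=(x_0=x,x_1,\dots,x_k=y)$,
$$f(y)-f(x)=\sum_{i=0}^{k-1}\bigl(f(x_{i+1})-f(x_i)\bigr),$$
and apply the elementary Cauchy--Schwarz bound $\bigl(\sum_{i=1}^{k}a_i\bigr)^2\le k\sum_{i=1}^{k}a_i^{2}$ to get
$$\bigl|f(x)-f(y)\bigr|^{2}\le|\gamma_{xy}|\sum_{e=(u,v)\in\gamma_{xy}}\bigl|f(u)-f(v)\bigr|^{2}.$$
This is the only genuine inequality in the argument apart from the definition of $A$, and it is where the length factor $|\gamma_{xy}|$ enters.

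Next I would substitute this into the definition of the Dirichlet form,
$$\tilde{\mathcal{E}}(f,f)=\frac12\sum_{(x,y):\,\tilde P(x,y)>0}\bigl|f(x)-f(y)\bigr|^{2}\,\tilde\pi(x)\tilde P(x,y),$$
and interchange the order of summation, grouping the terms according to which edge $e=(z,w)\in E$ is being traversed. Using reversibility of both $P$ and $\tilde P$ — so that $\pi(z)P(z,w)$ and $\tilde\pi(x)\tilde P(x,y)$ are symmetric under exchange of their two arguments and the orientation of edges is immaterial — this yields
$$\tilde{\mathcal{E}}(f,f)\le\frac12\sum_{(z,w)\in E}\bigl|f(z)-f(w)\bigr|^{2}\sum_{(x,y)\in\tilde E(z,w)}|\gamma_{xy}|\,\tilde\pi(x)\tilde P(x,y).$$

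Finally, the definition of $A$ gives $\sum_{(x,y)\in\tilde E(z,w)}|\gamma_{xy}|\,\tilde\pi(x)\tilde P(x,y)\le A\,\pi(z)P(z,w)$ for every edge $(z,w)\in E$, and inserting this back leaves exactly $\tfrac{A}{2}\sum_{(z,w)\in E}|f(z)-f(w)|^{2}\pi(z)P(z,w)=A\,\mathcal{E}(f,f)$, which is the assertion. I expect the only delicate point to be the combinatorial reorganization of the double sum together with the consistent handling of directed versus undirected edges; if one works with simple paths, as in Lemma~\ref{Lemma_Poincare_Inequality}, no edge occurs twice in any $\gamma_{xy}$ and $\tilde E(z,w)$ is literally the set of endpoint pairs whose path passes through $(z,w)$, which removes any ambiguity. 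Since the entire analytic content is carried by the single application of Cauchy--Schwarz, I anticipate no real obstacle.
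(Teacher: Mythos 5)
Your argument is correct and is the standard proof of the Diaconis--Saloff-Coste comparison theorem: telescope along the fixed path, apply Cauchy--Schwarz to pick up the length factor, interchange the order of summation to group by edges of $E$, and bound the resulting inner sum by $A\,\pi(z)P(z,w)$. The paper cites this result from \cite{DiaconisSaloffCosteComparison} without reproducing the proof, so there is nothing to compare against, but your reconstruction faithfully tracks the original; your remark about reversibility making $\pi(z)P(z,w)$ and $|f(z)-f(w)|^2$ orientation-symmetric is exactly what makes the edge bookkeeping go through.
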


\begin{lemma}[Lemma 5 of \cite{MadrasZhengCW}]\label{Lemma_Compare_Derichlet_forms}
Let $(P,\pi)$ and $(\tilde P,\tilde \pi)$ be two Markov chains on the same finite state space $\S$, with respective Dirichlet forms $\mathcal{E}$ and $\mathcal{E}'$. Assume that there exists constants $A,a>0$ such that
$$\mathcal{E}'\leq A\mathcal{E}\quad \text{and}\quad a\pi\leq\tilde\pi.$$
Then
$$\Gap(\tilde P)\leq \frac{A}{a} \Gap(P).$$
\end{lemma}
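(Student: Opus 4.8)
The plan is to reduce the statement to the variational (Rayleigh--quotient) characterisation of the spectral gap recorded earlier in this appendix: $\Gap(P)=\inf\{\mathcal{E}(f,f)/\Var_\pi(f)\}$ and, likewise, $\Gap(\tilde P)=\inf\{\mathcal{E}'(f,f)/\Var_{\tilde\pi}(f)\}$, the infima ranging over non-constant $f\colon\S\to\R$. Since $\S$ is finite the infimum for $P$ is attained, so I would fix a minimiser $f$ --- an eigenfunction of $P$ for its second-largest eigenvalue --- which satisfies $\mathcal{E}(f,f)=\Gap(P)\,\Var_\pi(f)$ and $\Var_\pi(f)>0$. The bound will then follow by inserting this particular $f$ into the variational problem for $\Gap(\tilde P)$ and estimating numerator and denominator separately.

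The numerator is immediate from the hypothesis, $\mathcal{E}'(f,f)\le A\,\mathcal{E}(f,f)$. The denominator carries the one genuine point of the proof: one must show $\Var_{\tilde\pi}(f)\ge a\,\Var_\pi(f)$, and the naive route --- writing $\Var_{\tilde\pi}(f)=\tfrac12\sum_{x,y}|f(x)-f(y)|^2\tilde\pi(x)\tilde\pi(y)$ and using $\tilde\pi(x)\tilde\pi(y)\ge a^2\pi(x)\pi(y)$ --- is wasteful and only gives $a^2$. Instead I would use the identity $\Var_\mu(g)=\inf_{c\in\R}\sum_x(g(x)-c)^2\,\mu(x)$, valid for every probability measure $\mu$, with the infimum attained at $c=\E_\mu g$. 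Applying it with $\mu=\tilde\pi$ and letting $c$ be the corresponding optimal constant, the pointwise bound $a\pi\le\tilde\pi$ yields $\Var_{\tilde\pi}(f)=\sum_x(f(x)-c)^2\tilde\pi(x)\ge a\sum_x(f(x)-c)^2\pi(x)\ge a\,\Var_\pi(f)$, the last inequality because that same $c$ need not be optimal for $\pi$. In particular $\Var_{\tilde\pi}(f)\ge a\,\Var_\pi(f)>0$, so $f$ is admissible in the variational problem for $\tilde P$.

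Combining the two estimates gives
$$\Gap(\tilde P)\le\frac{\mathcal{E}'(f,f)}{\Var_{\tilde\pi}(f)}\le\frac{A\,\mathcal{E}(f,f)}{a\,\Var_\pi(f)}=\frac{A}{a}\,\Gap(P),$$
as claimed. The main --- essentially the only --- obstacle is the bookkeeping of the constant: obtaining $A/a$ rather than $A/a^{2}$ hinges precisely on using the minimum-over-constants form of the variance instead of its double-sum form. (One should keep in mind that the Dirichlet-form characterisation of $\Gap$ used here presupposes the underlying operators are non-negative, which holds throughout the paper because of the lazy $\tfrac12$ built into $P$ and $Q$.)
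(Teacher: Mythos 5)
Your argument is correct and complete. Note that the paper itself gives no proof of this statement --- it is cited directly from Madras and Zheng (their Lemma 5) --- so there is no in-paper proof to compare against; what you have written is the standard argument, and it is essentially the one in that reference. The decisive step is the one you flag: inserting $a\pi\le\tilde\pi$ into the double-sum form $\Var_\pi(f)=\tfrac12\sum_{x,y}|f(x)-f(y)|^2\pi(x)\pi(y)$ would cost a factor $a^2$; passing instead through $\Var_\mu(g)=\min_{c}\sum_x(g(x)-c)^2\mu(x)$, evaluating at the optimal $c$ for $\tilde\pi$, and then observing that this same $c$ is merely feasible (not necessarily optimal) for $\pi$, yields the sharp factor $a$. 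Your closing caveat about positivity is also well placed: the identification $\Gap(P)=\inf_f\mathcal{E}(f,f)/\Var_\pi(f)$, which both you and the paper invoke, gives $1$ minus the second-largest eigenvalue, and this coincides with the paper's definition $1-|\lambda_1|$ only when the spectrum lies in $[0,1]$; the paper ensures this by the explicit laziness built into $P$ and $Q$.
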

\begin{remark}
 A sufficient condition for $\mathcal{E}'\leq A\mathcal{E}$ is that
$$\tilde \pi(x)\tilde P(x,y)\leq A\ \pi(x)P(x,y)\quad \text{for all }x,y\in\S\text{ such that }x\neq y.$$
\end{remark}

\begin{lemma}[Lemma 6 of \cite{MadrasZhengCW}]\label{Multiple_usage_does_not_change_Gap}
For any reversible finite Markov chain $P$,
$$\Gap(P)\geq \frac1m \Gap(P^m)\quad \forall m\in\N^*.$$
\end{lemma}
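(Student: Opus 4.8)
The plan is to argue purely spectrally, using the characterisation of $\Gap$ recalled above. Since $P$ is reversible with respect to its stationary measure $\pi$, it is self-adjoint on $L^2(\pi)$, hence has only real eigenvalues $1=\lambda_0,\lambda_1,\dots,\lambda_{|\S|-1}$, all lying in $[-1,1]$. As $P$ is irreducible and aperiodic — the standing assumption here (and in any case $P$ is lazy, hence a positive operator, by the lemma above) — the quantity $\beta_\star:=\max\{\,|\lambda_i|:\lambda_i\ne 1\,\}$ satisfies $\beta_\star<1$, and by definition $\Gap(P)=1-\beta_\star$.

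First I would pass to $P^m$, whose eigenvalues are exactly $\lambda_0^m,\dots,\lambda_{|\S|-1}^m$. Since every $\lambda_i\ne 1$ has $|\lambda_i|\le\beta_\star<1$, no such $\lambda_i^m$ can equal $1$, so $\{\lambda_i:\lambda_i^m\ne 1\}=\{\lambda_i:\lambda_i\ne 1\}$; and because $t\mapsto t^m$ is increasing on $[0,\infty)$ we get $\max\{\,|\lambda_i^m|:\lambda_i^m\ne 1\,\}=\max\{\,|\lambda_i|^m:\lambda_i\ne 1\,\}=\beta_\star^{\,m}$. Hence $\Gap(P^m)=1-\beta_\star^{\,m}$.

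It then remains to compare $1-\beta_\star^{\,m}$ with $m(1-\beta_\star)$, which is the elementary telescoping bound $1-\beta_\star^{\,m}=(1-\beta_\star)\sum_{j=0}^{m-1}\beta_\star^{\,j}\le m\,(1-\beta_\star)$, valid since $0\le\beta_\star^{\,j}\le 1$ for each $j$. Combining, $\Gap(P^m)=1-\beta_\star^{\,m}\le m\,(1-\beta_\star)=m\,\Gap(P)$, which is the asserted inequality. Alternatively, and staying within the Dirichlet-form language used elsewhere in the paper, one may write $I-P^m=(I-P)\sum_{j=0}^{m-1}P^j$ with $0\preceq P^j\preceq I$ to obtain $\mathcal{E}_{P^m}(f,f)\le m\,\mathcal{E}_{P}(f,f)$ for every $f$; since $\Var(f)$ is unchanged, taking the infimum over $f$ with $\Var(f)\ne 0$ yields $\Gap(P^m)\le m\,\Gap(P)$ directly.

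I do not expect a genuine obstacle here; the only point needing a word of care is the degenerate periodic case $\beta_\star=1$, in which passing to a power can shrink the set $\{\lambda_i:\lambda_i^m\ne 1\}$ so that the convention for the maximum would matter — but this never occurs for the irreducible aperiodic (indeed lazy) chains used throughout, so the argument above applies as stated.
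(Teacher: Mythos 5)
The paper only cites this statement as Lemma~6 of Madras and Zheng \cite{MadrasZhengCW} without reproducing an argument, so there is no in-paper proof to compare against; your proof is the standard one and it is correct. One small imprecision in your Dirichlet-form alternative: $0\preceq P^j$ is not true for a general reversible $P$ (a non-lazy $P$ with a negative eigenvalue has $P^j\not\succeq 0$ for odd $j$), though it is also not needed. The operator fact you actually use is only $\sum_{j=0}^{m-1}P^j\preceq mI$, which follows from $P^j\preceq I$ alone, and that holds because $\mathrm{spec}(P)\subset[-1,1]$. Equivalently, the scalar inequality $1-\lambda^m\le m(1-\lambda)$ for all $\lambda\in[-1,1]$ — the telescoping bound for $\lambda\ge 0$, and for $\lambda<0$ simply $1-\lambda^m\le 1+|\lambda|^m\le 1+|\lambda|=1-\lambda\le m(1-\lambda)$ — applied eigenvalue by eigenvalue gives $\mathcal{E}_{P^m}(f,f)\le m\,\mathcal{E}_P(f,f)$ with no side conditions. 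That variational route is preferable because it proves the lemma exactly in the generality it is stated (``any reversible finite Markov chain''), and in particular disposes of the degenerate case $\beta_\star=1$ that your primary spectral argument has to set aside at the end.
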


\begin{lemma}[Lemma 7 of \cite{MadrasZhengCW}]\label{MadrasZhengLemma7}
 Let $A$ and $B$ be Markov kernels reversible with respect to a distribution $\pi$. The following holds for $A$ and $B$:
$$\Gap(ABA)\geq \Gap(B).$$
This also holds for $A$ substituted by $A$'s positive square root $A^{\frac12}$, if additionally $A$ is a nonnegative (self-adjoint) operator.
\end{lemma}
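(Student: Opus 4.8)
The plan is to recast the statement in the language of self-adjoint operators on $L^2(\pi)$. Equip $L^2(\pi)$ with $\langle f,g\rangle_\pi=\sum_x f(x)g(x)\pi(x)$ and let $\mathcal{H}_0=\{f:\E_\pi f=0\}$ be the orthogonal complement of the constants. Reversibility of $A$ (resp.\ $B$) with respect to $\pi$ says precisely that $A$ (resp.\ $B$) is a self-adjoint operator on $L^2(\pi)$; being stochastic it fixes the constant function $\mathbf{1}$, hence by self-adjointness it leaves $\mathcal{H}_0$ invariant. A reversible stochastic operator has real spectrum contained in $[-1,1]$, so its operator norm on $L^2(\pi)$, and a fortiori its operator norm on $\mathcal{H}_0$, is at most $1$. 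Finally, for an irreducible reversible chain $P$ the eigenvalue $1$ is simple and all other eigenvalues are exactly those of the self-adjoint operator $P|_{\mathcal{H}_0}$, so by the spectral theorem $\Gap(P)=1-\|P|_{\mathcal{H}_0}\|$, where $\|\cdot\|$ denotes the operator norm on $\mathcal{H}_0$.

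With this reformulation the inequality is immediate. The operator $ABA$ is reversible with respect to $\pi$ — its adjoint is $A^{*}B^{*}A^{*}=ABA$ — and it too leaves $\mathcal{H}_0$ invariant, so $\Gap(ABA)=1-\|(ABA)|_{\mathcal{H}_0}\|$. Since each of $A$, $B$, $A$ maps $\mathcal{H}_0$ into itself, submultiplicativity of the operator norm gives
\[
\|(ABA)|_{\mathcal{H}_0}\|\;\le\;\|A|_{\mathcal{H}_0}\|\,\|B|_{\mathcal{H}_0}\|\,\|A|_{\mathcal{H}_0}\|\;\le\;\|B|_{\mathcal{H}_0}\|,
\]
using $\|A|_{\mathcal{H}_0}\|\le 1$, and therefore $\Gap(ABA)=1-\|(ABA)|_{\mathcal{H}_0}\|\ge 1-\|B|_{\mathcal{H}_0}\|=\Gap(B)$.

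For the second assertion, assume in addition that $A$ is a nonnegative self-adjoint operator. Then the positive square root $A^{1/2}$ is a well-defined self-adjoint nonnegative operator; it fixes $\mathbf{1}$ (the square root of the eigenvalue $1$ is $1$), hence leaves $\mathcal{H}_0$ invariant, and $\|A^{1/2}|_{\mathcal{H}_0}\|=\|A|_{\mathcal{H}_0}\|^{1/2}\le 1$. It is irrelevant that $A^{1/2}$ need not have nonnegative entries, i.e.\ need not be a genuine Markov kernel: the argument of the previous paragraph used only self-adjointness, invariance of $\mathcal{H}_0$, and the norm bound $\le 1$. Replacing $A$ by $A^{1/2}$ throughout yields $\Gap(A^{1/2}BA^{1/2})\ge\Gap(B)$. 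The computation is entirely routine; the only points needing attention are the bookkeeping ones — checking that all operators involved leave the mean-zero subspace invariant, that the top eigenvalue $1$ is simple so that $\Gap(\cdot)=1-\|\cdot|_{\mathcal{H}_0}\|$ really holds, and that $A^{1/2}$ retains every property the argument actually uses — so no genuine obstacle is hidden anywhere.
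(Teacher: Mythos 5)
Your proof is correct, and the paper itself does not reproduce a proof of this statement --- it is quoted verbatim as Lemma~7 of Madras and Zheng \cite{MadrasZhengCW}, so there is nothing internal to the paper to compare against. Your operator-theoretic argument (identify $\Gap(P) = 1 - \|P|_{\mathcal H_0}\|$ on the mean-zero subspace of $L^2(\pi)$, then use that $\mathcal H_0$ is invariant under all three factors and that $\|A|_{\mathcal H_0}\|\le 1$, so submultiplicativity gives $\|ABA|_{\mathcal H_0}\|\le\|B|_{\mathcal H_0}\|$) is the standard route and is essentially the argument of Madras and Zheng. You also correctly observe that for the $A^{1/2}$ variant the only properties needed are self-adjointness, invariance of $\mathcal H_0$, $A^{1/2}\mathbf 1=\mathbf 1$, and $\|A^{1/2}|_{\mathcal H_0}\|=\|A|_{\mathcal H_0}\|^{1/2}\le 1$, none of which require $A^{1/2}$ to be a genuine Markov kernel. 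One small remark: the appeal to irreducibility to make $1$ a simple eigenvalue is unnecessary --- the identity $\Gap(P)=1-\|P|_{\mathcal H_0}\|$, with the paper's definition $\Gap(P)=1-\max\{|\lambda_i|:\lambda_i\ne 1\}$ read as ``$1$ minus the spectral radius on $\mathcal H_0$,'' holds for any reversible stochastic $P$ with full-support stationary $\pi$, and dropping the irreducibility hypothesis is in fact desirable since $ABA$ need not inherit irreducibility from $A$ and $B$.
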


\begin{theorem}[Caracciolo-Pelissetto-Sokal \cite{MadrasZhengCW}]\label{CaraccioloPelissettoSokal}
Let $\mu$ be a probability distribution on a finite state space $\mathcal{S}$, and let $\mathcal{P}$ be a transition matrix reversible with respect to $\mu$. Suppose that we partition the set $\mathcal{S}$ as
$$\mathcal{S}= \bigcup_{i=1}^m \mathcal{S}_i,  \mbox{with}\ \mathcal{S}_i\cap\mathcal{S}_j =\emptyset , \mbox{if}\  i\neq j.$$
For each $i=1,\ldots, m$, let $\mathcal{P}_i$ be the restriction of $\mathcal{P}$ to $\mathcal{S}_i$, by rejecting jumps that leave $\mathcal{S}_i$ : for all $x\in \mathcal{S}_i,$  for all $B \subset \mathcal{S}_i$,

$$\mathcal{P}_i(x,B)= \mathcal{P}(x,B) + {\bf 1}_{\{x\in B\}} \mathcal{P}(x, \mathcal{S}\setminus \mathcal{S}_i).$$

 Let $\mathcal{Q}$ be a positive operator, that is also reversible with respect to $\mu$, and $\overline{\mathcal{Q}}$ the aggregated chain associated to the partition $(\mathcal{S}_i)_{i=1,...,m}$; more precisely, for $i,j=1,\ldots,m$,
$$\overline{\mathcal{Q}}(i,j)= \frac 1{\mu(\mathcal{S}_i)} \sum_{x\in\mathcal{S}_i}  \sum_{y\in\mathcal{S}_j} \mu(x) \mathcal{Q}(x,y).$$
 Let $\mathcal{Q}^{\frac12}$ be the positive square root of $\mathcal{Q}$. Then
 \begin{equation}
 \Gap(\mathcal{Q}^{\frac12}\mathcal{P}\mathcal{Q}^{\frac12})\geq \Gap(\overline{\mathcal{Q}})\cdot \min_{1\leq i\leq m}\Gap(\mathcal{P}_i).
\end{equation}

\end{theorem}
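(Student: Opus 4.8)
The plan is to combine the $L^2$ (Dirichlet form) description of the spectral gap with two soft reductions, i.e.\ the standard spectral-decomposition argument. Write $\langle f,g\rangle_\mu=\sum_{x}f(x)g(x)\mu(x)$, let $\mu_i$ be $\mu$ renormalised on $\mathcal{S}_i$, and let $\Pi$ be the orthogonal projection in $L^2(\mu)$ onto the subspace of functions that are constant on each $\mathcal{S}_i$, so $(\Pi f)|_{\mathcal{S}_i}=\E_{\mu_i}f$. As is the case in all our applications (every chain is made lazy), I will assume in addition that $\mathcal{P}$ is a positive operator; then $M:=\mathcal{Q}^{\frac12}\mathcal{P}\mathcal{Q}^{\frac12}$ is a nonnegative self-adjoint contraction on $L^2(\mu)$ fixing the constants, and $\Gap(M)=1-\sup\{\langle f,Mf\rangle_\mu:\E_\mu f=0,\ \|f\|_\mu=1\}$.

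First I would replace $\mathcal{P}$ by the block-diagonal chain $\mathcal{P}^{*}:=\bigoplus_{i}\mathcal{P}_i$ (which is reversible with respect to $\mu$, and inherits positivity from $\mathcal{P}$). Since the Dirichlet form $\mathcal{E}_{\mathcal{P}}(f,f)=\frac12\sum_{x,y}(f(x)-f(y))^2\mathcal{P}(x,y)\mu(x)$ dominates the part of that sum over within-block pairs $(x,y)$, and that part is precisely $\mathcal{E}_{\mathcal{P}^{*}}(f,f)$, one gets $\langle f,\mathcal{P}f\rangle_\mu\le\langle f,\mathcal{P}^{*}f\rangle_\mu$ for every $f$; sandwiching by $\mathcal{Q}^{\frac12}$ yields $\langle f,Mf\rangle_\mu\le\langle f,\mathcal{Q}^{\frac12}\mathcal{P}^{*}\mathcal{Q}^{\frac12}f\rangle_\mu$, hence $\Gap(M)\ge\Gap(\mathcal{Q}^{\frac12}\mathcal{P}^{*}\mathcal{Q}^{\frac12})$. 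This is the economical direction: $\mathcal{P}^{*}$ makes no inter-block move at all, so the entire task of travelling between the $\mathcal{S}_i$ is shifted onto $\mathcal{Q}$, which is the worst case, and it suffices to treat it.

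Next I would analyse $\mathcal{P}^{*}$. Each $\mathbf 1_{\mathcal{S}_i}$ is fixed by $\mathcal{P}^{*}$, so $\mathcal{P}^{*}$ is the identity on the range of $\Pi$ and, writing $\delta:=\min_i\Gap(\mathcal{P}_i)$, all its eigenvalues on the orthogonal complement lie in $[0,1-\delta]$; thus $0\le\mathcal{P}^{*}\le(1-\delta)I+\delta\,\Pi$ in the sense of quadratic forms, whence $\mathcal{Q}^{\frac12}\mathcal{P}^{*}\mathcal{Q}^{\frac12}\le(1-\delta)\,\mathcal{Q}+\delta\,\mathcal{Q}^{\frac12}\Pi\mathcal{Q}^{\frac12}$. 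For $f$ with $\E_\mu f=0$, $\|f\|_\mu=1$ I would bound the two summands: $\langle f,\mathcal{Q}f\rangle_\mu\le 1$ because $\mathcal{Q}\le I$, while for the second — and this is the step that actually brings in the aggregated chain — one writes $\mathcal{Q}^{\frac12}\Pi\mathcal{Q}^{\frac12}=B^{*}B$ with $B:=\Pi\mathcal{Q}^{\frac12}$, so it has the same nonzero spectrum as $BB^{*}=\Pi\mathcal{Q}\Pi$, and the restriction of $\Pi\mathcal{Q}\Pi$ to the range of $\Pi$ is, under the canonical isometry of block-constant functions with $L^2(\bar\mu)$ on $\{1,\dots,m\}$, exactly $\overline{\mathcal{Q}}$. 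Hence on $\mathbf 1^{\perp}$ one has $\langle f,\mathcal{Q}^{\frac12}\Pi\mathcal{Q}^{\frac12}f\rangle_\mu\le 1-\Gap(\overline{\mathcal{Q}})$, and combining, $\sup_{f\perp\mathbf 1,\,\|f\|_\mu=1}\langle f,\mathcal{Q}^{\frac12}\mathcal{P}^{*}\mathcal{Q}^{\frac12}f\rangle_\mu\le(1-\delta)+\delta(1-\Gap(\overline{\mathcal{Q}}))=1-\delta\,\Gap(\overline{\mathcal{Q}})$, i.e.\ $\Gap(\mathcal{Q}^{\frac12}\mathcal{P}^{*}\mathcal{Q}^{\frac12})\ge\delta\,\Gap(\overline{\mathcal{Q}})$; with the first reduction this is the claim.

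The only point needing care is the last paragraph: throughout one uses that $\mathbf 1$ is the simple top eigenvector of $\mathcal{Q}$, of $\overline{\mathcal{Q}}$, and of $\mathcal{Q}^{\frac12}\Pi\mathcal{Q}^{\frac12}$, so that restricting to $f\perp\mathbf 1$ genuinely extracts the second eigenvalue and the identity $\Gap=1-\lambda_1$ applies; if $\overline{\mathcal{Q}}$ or one of the $\mathcal{P}_i$ fails to be ergodic the right-hand side is $0$ and there is nothing to prove, so ergodicity may be assumed. I expect the identification $BB^{*}=\Pi\mathcal{Q}\Pi\cong\overline{\mathcal{Q}}$ to be the one genuinely nonroutine ingredient; everything else is the manipulation of Dirichlet forms and operator inequalities already recalled in this appendix.
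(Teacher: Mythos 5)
The paper does not supply a proof for this statement: it is quoted as the Caracciolo--Pelissetto--Sokal decomposition theorem via Madras--Zheng \cite{MadrasZhengCW}, so there is no in-text argument to compare yours against. Evaluated on its own terms, your proof is correct and is essentially the standard spectral-decomposition argument, written in clean operator language. Both reductions are sound: the block-diagonal replacement $\mathcal{P}\rightsquigarrow\mathcal{P}^{*}=\bigoplus_i\mathcal{P}_i$ only discards nonnegative inter-block terms from the Dirichlet form, so $\langle f,\mathcal{P}f\rangle_\mu\le\langle f,\mathcal{P}^{*}f\rangle_\mu$; and since $\mathcal{P}^{*}$ fixes $\operatorname{range}(\Pi)$ pointwise and has spectrum in $[0,1-\delta]$ on $\operatorname{range}(\Pi)^{\perp}$, the operator inequality $\mathcal{P}^{*}\le(1-\delta)I+\delta\Pi$ holds, and conjugation by $\mathcal{Q}^{\frac12}$ preserves it. The identification of the nonzero spectrum of $\mathcal{Q}^{\frac12}\Pi\mathcal{Q}^{\frac12}=B^{*}B$ with that of $BB^{*}=\Pi\mathcal{Q}\Pi\cong\overline{\mathcal{Q}}$ under the isometry $L^2(\bar\mu)\hookrightarrow L^2(\mu)$ is exactly the right mechanism for bringing in $\Gap(\overline{\mathcal{Q}})$, and is a tidier way to say what Madras and Zheng do by decomposing $g=\mathcal{Q}^{\frac12}f$ into its block means $\Pi g$ and fluctuations $g-\Pi g$ and estimating the two pieces of $\|\Pi g\|^2+(1-\delta)\|g-\Pi g\|^2$ separately. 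The two proofs are the same computation in different notation.

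Two small points worth flagging. First, you add the hypothesis that $\mathcal{P}$ is a positive operator, which is not in the statement (only $\mathcal{Q}$ is assumed positive). This hypothesis is genuinely used: without it $\mathcal{Q}^{\frac12}\mathcal{P}\mathcal{Q}^{\frac12}$ may have large negative eigenvalues that are not controlled by $\Gap(\mathcal{P}_i)$, and moreover the paper's identity $\Gap(P)=\inf\mathcal{E}(f,f)/\mathrm{Var}(f)$, which equates the gap with $1-\lambda_1$ rather than $1-\max|\lambda_i|$, silently requires nonnegative spectrum. In every application in this paper the role of $\mathcal{P}$ is played by $QPQ$, $QP_+Q$, or $T^2$, each of which is a positive operator, so the added hypothesis is harmless and in fact implicitly present; it would be cleaner, though, to say explicitly that the theorem is to be read with $\mathcal{P}\ge 0$. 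Second, your remark about ergodicity is right in spirit but slightly off in one place: if $\overline{\mathcal{Q}}$ is reducible, the paper's formula $1-\max\{|\lambda_i|:\lambda_i\ne 1\}$ does not literally return $0$ (it excludes all copies of the eigenvalue $1$ from the max), so "the right-hand side is $0$" needs the standard reading of the gap as $1-\lambda_1$ with $\lambda_1$ the second eigenvalue counted with multiplicity. Under that reading, your dismissal of the non-ergodic case is correct and the rest of the argument goes through.
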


\begin{theorem}[Diaconis and Saloff-Coste \cite{DiaconisSaloffCosteComparison}]\label{DiaconisSaloffCosteProductChain}
 For $i=1,...,M$, let $P_i$ be a reversible Markov chain on a finite state space $\Omega_i$. Consider the product Markov chain $P$ on the product space $\Omega_0\times...\times \Omega_M$, defined by
\begin{equation}
 P=\frac1{M+1}\sum_{i=0}^{M}I\otimes...\otimes I\otimes P_i\otimes I\otimes...\otimes I,
\end{equation}
where (in a slight abuse of notation) $I$ denotes the identity on the space it is defined.
Then $\Gap(P)=\frac{1}{M+1}\min_{i\in\{0,...,M\}} \{\Gap(P_i)\}$.
\end{theorem}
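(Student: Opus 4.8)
The plan is to obtain the result from a simultaneous spectral decomposition of the commuting operators that add up to $P$. First I would fix the stationary measure: writing $\pi_i$ for the reversible distribution of $P_i$ on $\Omega_i$, the product $\pi := \pi_0\otimes\cdots\otimes\pi_M$ is reversible for $P$, because each summand $I\otimes\cdots\otimes P_i\otimes\cdots\otimes I$ is self-adjoint on $L^2(\pi)$. The algebraic point is the identity
$$ I - P = \frac{1}{M+1}\sum_{i=0}^{M} I\otimes\cdots\otimes(I-P_i)\otimes\cdots\otimes I, $$
exhibiting $I-P$ as a normalized sum of pairwise commuting, nonnegative, self-adjoint operators on $L^2(\pi)$.

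Next I would diagonalize. For each $i$ choose an orthonormal basis $\{f^{(i)}_0\equiv 1, f^{(i)}_1,\dots\}$ of $L^2(\pi_i)$ consisting of eigenfunctions of $P_i$ with eigenvalues $1=\lambda^{(i)}_0\ge\lambda^{(i)}_1\ge\cdots$, so that $\Gap(P_i)=1-\lambda^{(i)}_1$ (the chains relevant here are positive operators, so the absolute value in the definition of $\Gap$ is immaterial; in general one notes instead that the most negative eigenvalue of $P$ equals $\frac{1}{M+1}\sum_i\lambda^{(i)}_{\min}\ge\min_i\lambda^{(i)}_{\min}$, which never beats the bound below). The tensor products $\bigotimes_{i=0}^{M}f^{(i)}_{k_i}$, over all multi-indices $(k_0,\dots,k_M)$, then form an orthonormal eigenbasis of $P$, with $\bigotimes_i f^{(i)}_{k_i}$ having eigenvalue $\frac{1}{M+1}\sum_{i=0}^{M}\lambda^{(i)}_{k_i}$.

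From here both inequalities are read off. The eigenvalue $1$ arises only for $k_0=\cdots=k_M=0$ (the constant function); every other eigenvalue has some coordinate $j$ with $k_j\ne 0$, hence $\lambda^{(j)}_{k_j}\le\lambda^{(j)}_1=1-\Gap(P_j)$ while the remaining $M$ factors are $\le 1$, so that eigenvalue is at most $1-\frac{1}{M+1}\Gap(P_j)\le 1-\frac{1}{M+1}\min_i\Gap(P_i)$; this gives $\Gap(P)\ge\frac{1}{M+1}\min_i\Gap(P_i)$. Conversely, picking $j$ that realizes $\min_i\Gap(P_i)$ and taking the function on $\Omega_0\times\cdots\times\Omega_M$ that depends only on the $j$-th coordinate through $f^{(j)}_1$, this is an eigenfunction of $P$ with eigenvalue $\frac{1}{M+1}(M+\lambda^{(j)}_1)=1-\frac{1}{M+1}\Gap(P_j)<1$, whence $\Gap(P)\le\frac{1}{M+1}\min_i\Gap(P_i)$, and equality follows.

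There is no genuinely hard step here: the content is just tensorization of the spectral decomposition. The one place that needs a word of care is the sign convention in the definition of $\Gap$ (the maximum of $|\lambda_i|$ over $\lambda_i\ne 1$ versus the second-largest eigenvalue), handled by the parenthetical remark above. An alternative route that avoids diagonalizing, giving only the lower bound, is the standard tensorization of the Poincar\'e inequality via the coordinatewise variance decomposition; but since the statement is an equality, the explicit spectral computation is the cleanest.
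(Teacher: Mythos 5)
The paper does not prove this theorem itself: it is cited as a known result of Diaconis and Saloff-Coste, so there is no in-paper argument to compare against. Your spectral tensorization proof is the natural and standard one for this fact, and the core of it is correct: the commuting self-adjoint summands are simultaneously diagonalized by the tensor products of eigenbases, the eigenvalues of $P$ are the averages $\frac{1}{M+1}\sum_i \lambda^{(i)}_{k_i}$, and the second-largest one is $1-\frac{1}{M+1}\min_i(1-\lambda^{(i)}_1)$.

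One point, however, deserves to be stated more carefully than your parenthetical does. With the paper's literal definition $\Gap(P)=1-\max\{|\lambda|:\lambda\ne 1\}$, the claimed \emph{equality} can genuinely fail when negative eigenvalues dominate. Take $M=1$ and each $P_i$ with spectrum $\{1,\;0.5,\;-0.9\}$: then $\Gap(P_i)=0.1$, but $P$ has spectrum $\{1,0.75,0.5,0.05,-0.2,-0.9\}$, so $\Gap(P)=0.1$, which is not $\tfrac12\min_i\Gap(P_i)=0.05$. Your observation that $\bigl|\frac{1}{M+1}\sum_i\lambda^{(i)}_{\min}\bigr|\le 1-\frac{1}{M+1}\min_i\Gap(P_i)$ is correct and does save the \emph{lower bound} $\Gap(P)\ge\frac{1}{M+1}\min_i\Gap(P_i)$ (the only direction the paper uses); but the upper-bound step relies on the identity $\lambda^{(j)}_1 = 1-\Gap(P_j)$, which fails precisely when $|\lambda^{(j)}_{\min}|>\lambda^{(j)}_1$. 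So either restrict to nonnegative operators (as holds for the lazy chains in the paper, where $P(x,x)\ge\frac12$), or read $\Gap$ throughout as $1-\lambda_1$ with $\lambda_1$ the second-largest eigenvalue without absolute value; under either convention your proof is complete and correct, and the equality is exactly as you derived it.
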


\begin{theorem}[Jerrum and Sinclair \cite{JerrumSinclair}]
\label{Conductance_Lemma}
Let $P$ be a Markov chain on a finite set $\Omega$ reversible with respect to $\pi$. For all $\mathcal{S}\subset \Omega$, let
$$ \Phi_{\mathcal{S}} = \frac {\sum_{x\in \mathcal{S}, y\notin \mathcal{S}} \pi(x) P(x,y)}{\pi(\mathcal{S})},$$ and the conductance $\Phi$   given by
$$\Phi  = \min_{\mathcal{S} : \pi(\mathcal{S})\le 1/2} \Phi_\mathcal{S}.$$
Then we have
$$ \frac {\Phi^2}2 \le \Gap(P) \le 2 \Phi.$$
\end{theorem}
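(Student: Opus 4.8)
The plan is to establish the two inequalities $\Gap(P)\le 2\Phi$ and $\Gap(P)\ge \Phi^2/2$ separately, working from the variational characterisation recalled above, $\Gap(P)=\inf_f \mathcal{E}(f,f)/\Var(f)$, together with the identity $\mathcal{E}(f,f)=\langle (I-P)f,f\rangle_\pi$ valid for reversible $P$. The upper bound is a one‑line test‑function computation, while the lower bound is the discrete Cheeger inequality and is where the real work lies.

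\emph{Upper bound.} Fix any $\mathcal{S}$ with $\pi(\mathcal{S})\le 1/2$ and plug $f=\mathbf{1}_{\mathcal{S}}$ into the Rayleigh quotient. Reversibility gives $\mathcal{E}(\mathbf{1}_{\mathcal{S}},\mathbf{1}_{\mathcal{S}})=\sum_{x\in\mathcal{S},\,y\notin\mathcal{S}}\pi(x)P(x,y)=\pi(\mathcal{S})\,\Phi_{\mathcal{S}}$, while $\Var(\mathbf{1}_{\mathcal{S}})=\pi(\mathcal{S})(1-\pi(\mathcal{S}))\ge \pi(\mathcal{S})/2$. Hence $\Gap(P)\le \mathcal{E}(\mathbf{1}_{\mathcal{S}},\mathbf{1}_{\mathcal{S}})/\Var(\mathbf{1}_{\mathcal{S}})\le 2\Phi_{\mathcal{S}}$, and minimising over admissible $\mathcal{S}$ yields $\Gap(P)\le 2\Phi$.

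\emph{Lower bound.} Let $\varphi$ be an eigenfunction of $P$ for the second‑largest eigenvalue $\lambda_1=1-\Gap(P)$; irreducibility forces $\varphi\perp\mathbf{1}$ in $L^2(\pi)$, so $\varphi$ changes sign, and replacing $\varphi$ by $-\varphi$ if needed we may assume $\pi\{\varphi>0\}\le 1/2$. Set $g:=\max(\varphi,0)\not\equiv 0$. Step one is $\mathcal{E}(g,g)\le \Gap(P)\,\|g\|_\pi^2$: writing $\varphi=g-\varphi_-$ with $\varphi_-=\max(-\varphi,0)$, the disjointness of the supports and the nonnegativity of the entries of $P$ give $\langle(I-P)g,g\rangle_\pi\le\langle(I-P)\varphi,g\rangle_\pi=\Gap(P)\langle\varphi,g\rangle_\pi=\Gap(P)\|g\|_\pi^2$. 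Step two is the co‑area estimate $\mathcal{E}(g,g)\ge \tfrac12\Phi^2\|g\|_\pi^2$. Apply Cauchy--Schwarz to
\[
\sum_{x,y}\bigl|g(x)^2-g(y)^2\bigr|P(x,y)\pi(x)=\sum_{x,y}|g(x)-g(y)|\,|g(x)+g(y)|\,P(x,y)\pi(x),
\]
bounding the first factor by $(2\mathcal{E}(g,g))^{1/2}$ and, via $(a+b)^2\le 2(a^2+b^2)$, the second by $2\|g\|_\pi$. For the matching lower bound, slice by the level sets $\mathcal{S}_t=\{g^2>t\}$: each has $\pi(\mathcal{S}_t)\le\pi\{\varphi>0\}\le 1/2$, so $\Phi_{\mathcal{S}_t}\ge\Phi$, and since $\int_0^\infty\pi(\mathcal{S}_t)\,dt=\|g\|_\pi^2$ one obtains $\sum_{x,y}|g(x)^2-g(y)^2|P(x,y)\pi(x)=2\int_0^\infty\pi(\mathcal{S}_t)\,\Phi_{\mathcal{S}_t}\,dt\ge 2\Phi\|g\|_\pi^2$. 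Combining the two bounds gives $2\Phi\|g\|_\pi^2\le 2\sqrt{2}\,\|g\|_\pi\sqrt{\mathcal{E}(g,g)}$, i.e. $\mathcal{E}(g,g)\ge\tfrac12\Phi^2\|g\|_\pi^2$; together with step one and $g\not\equiv 0$ this is $\Gap(P)\ge\Phi^2/2$.

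The main obstacle is the lower bound — concretely the two sublemmas just outlined, the reduction to the positive part $g=\varphi_+$ supported on a set of $\pi$‑measure at most $1/2$, and the Cauchy--Schwarz/co‑area inequality relating $\mathcal{E}(g,g)$ to $\Phi$. A minor technical point is that, as written, the argument bounds $1-\lambda_1$, whereas $\Gap(P)=1-\max_{\lambda_i\ne 1}|\lambda_i|$; for the chains considered in this paper this is harmless because they are lazy ($P(x,x)\ge 1/2$), which places every eigenvalue in $[0,1]$ so that $\Gap(P)=1-\lambda_1$. Since this is a classical result, in the paper we merely cite it.
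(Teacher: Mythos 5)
The paper does not prove this theorem; it simply cites Jerrum and Sinclair and uses it as a black box, so there is no internal proof to compare against. Your argument is the standard Cheeger/conductance bound for reversible chains and it is correct: the test-function computation $\mathcal{E}(\mathbf{1}_{\mathcal{S}},\mathbf{1}_{\mathcal{S}})=\pi(\mathcal{S})\Phi_{\mathcal{S}}$ for the easy direction, and for the hard direction the reduction to $g=\varphi_+$ with $\pi(\mathrm{supp}\,g)\le 1/2$, the inequality $\mathcal{E}(g,g)\le(1-\lambda_1)\|g\|_\pi^2$ via disjointness of $\mathrm{supp}\,\varphi_+$ and $\mathrm{supp}\,\varphi_-$, and the Cauchy--Schwarz plus co-area estimate $\sum_{x,y}|g(x)^2-g(y)^2|P(x,y)\pi(x)\ge 2\Phi\|g\|_\pi^2$ all check out. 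You also correctly identify the one genuine subtlety: the argument controls $1-\lambda_1$, whereas the paper defines $\Gap(P)=1-\max_{\lambda_i\neq 1}|\lambda_i|$; for a general reversible chain the lower bound can fail through near-periodicity, but every chain to which the paper applies this theorem has $P(x,x)\ge\tfrac12$, so all eigenvalues lie in $[0,1]$ and the two quantities coincide. Given that the paper treats this as a cited classical fact, this level of verification is exactly what is needed.
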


\section{Random 3-coloring of the complete graph}\label{rapid_mixing_on_three_colorings}
In this section, we will give a rapidly mixing Markov chain $(X_i)_i$ which has the uniform distribution on the set of of all 3-Colorings with a given number of vertices of a certain color as its stationary distribution. This will be of use, as we intend to compare the Metropolis Algorithm on $\A_{s,r}$ (see \eqref{A_sr}) of the BEG model with this chain in order to show rapid mixing.

Let $\Lambda=\{1,...,N\}$ and define $\Omega=\{-1,0,1\}^\Lambda$ to be the set of all possible 3-colorings of $\Lambda$. Note, that we do not restrict ourselves to 3-colorings in the graph theoretic sense, where adjacent vertices are required to have different colors. Further consider a tuple $(a_1,a_2,a_3)\in\Upsilon$, thus $N a_i$ represents the number of vertices, which have color $i$. Now let
\begin{equation}\mathcal{C}=\Big\{\sigma\in\Omega\Big|\frac{1}{N}\sum_j \delta_{i,\sigma_j}=a_i\Big\}\label{C_uniform_distribution_set}\end{equation}
be the set of appropriate 3-colorings and $\rho$ the uniform distribution on $\mathcal C$. Our aim is to give a Markov chain $(X_i)_{i\in\N}$ which compares well to the chain we  consider  in Section \ref{BEGequienergy} for the BEG model and which also samples efficiently from $\rho$.
\subsection{Rapid mixing of $(X_i)$}
Fix $\Ccal$ as in \eqref{C_uniform_distribution_set}. Consider the Markov chain $(X_i)$ on $\Ccal$ with the following transition kernel. Take $(\mathcal{R}_1(i))_{i\in\N}$ and $(\mathcal{R}_2(i))_{i\in\N}$ independently and uniformly distributed on $\{1,...,N\}$. Define
\begin{align}
X_1&:=X\in\Ccal\notag\\
X_{i+1}&:=\begin{cases}
         X_i&\mathcal{R}_1(i)=\mathcal{R}_2(i)\\
	 \big(\mathcal{R}_1(i),\mathcal{R}_2(i)\big)\big(X_{i}\big)&\mathcal{R}_1(i)\neq \mathcal{R}_2(i)
        \end{cases}
\end{align}
(where $X$ is any admissible starting point and for a vector $x:=(x_1, \ldots, x_N)$ and $i \neq j \in \{1, \ldots, N\}$ we write $(i,j)(x_1, \ldots x_N)$ for the vector $x$ with the components $i$ and $j$ interchanged)
and verify, that $(X_i)$ has reversible distribution $\rho$ on $\Ccal$. We will use a coupling argument in order to show rapid convergence to equilibrium of $(X_i)$. To this end define

\begin{equation} X'_1:=X'\in\Ccal\end{equation}
with $X'$ drawn according to $\rho$ and iteratively
\begin{equation} \Ccal(i):=\big\{j\in\{1,...,N\}\big|X_i(j)\neq X'_i(j)\big\}\end{equation}
with
\begin{align*}
X'_{i+1}&:=\begin{cases}
         X'_i&\mathcal{R}_1(i)=\mathcal{R}_2(i)\\
	 \big(\mathcal{R}_1(i),\mathcal{R}_2(i)\big)\big(X'_{i}\big) &X_i(\mathcal{R}_1(i))=X'_i(\mathcal{R}_1(i)) \wedge X_i(\mathcal{R}_2(i))\neq X'_i(\mathcal{R}_2(i))\\
	 \big(\mathcal{R}_1(i),\mathcal{R}_2(i)\big)\big(X'_{i}\big) &X_i(\mathcal{R}_1(i))\neq X'_i(\mathcal{R}_1(i)) \wedge X_i(\mathcal{R}_2(i))= X'_i(\mathcal{R}_2(i))\\
	\big(\mathcal{R}_1(i),\mathcal{R}_2(i)\big)\big(X'_{i}\big) &X_i(\mathcal{R}_1(i))= X'_i(\mathcal{R}_1(i)) \wedge X_i(\mathcal{R}_2(i))= X'_i(\mathcal{R}_2(i))\\
	 \big(\mathcal{R}_1(i),\mathcal{R}_3(i)\big)\big(X'_{i}\big) &\text{otherwise}
        \end{cases}
\end{align*}
and $\mathcal{R}_3$ being uniformly drawn out of $\Ccal(i)$ and independent of $(\mathcal{R}_1(i))$ and $(\mathcal{R}_2(i))$. Again verify that $(X'_i)$ is a Markov chain which is reversible with respect to $\rho$ on $\Ccal$. Thus $(X'_i)$ is in equilibrium in every step.
\begin{lemma}\label{Coupling_Lemma}
The expected coupling time $T_\Ccal$ of the Markov chains $(X_i)$ and $(X'_i)$ is bounded from above by
$$\E T_\Ccal\leq N^4.$$
\end{lemma}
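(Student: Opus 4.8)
The plan is to bound the coupling time $T_\Ccal=\min\{i:X_i=X_i'\}$ by tracking the Hamming distance $d_i:=|\Ccal(i)|$ between the two coupled chains and showing it reaches $0$ within expected time $N^4$. The first ingredient is that $d_i$ is non-increasing and that the coupling is absorbing once $d_i=0$; this is verified by running through the five branches in the definition of $X_{i+1}'$. If $\mathcal{R}_1(i)=\mathcal{R}_2(i)$ nothing moves. In the three branches where both chains apply the same transposition $(\mathcal{R}_1(i),\mathcal{R}_2(i))$, a one-line check shows that no new disagreement is created and any disagreement at one of the two touched sites is simply relocated to the other touched site, so $d_i$ is unchanged; in particular, if $d_i=0$ one is always in the ``agree at both'' branch and the chains stay coupled. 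In the remaining (``otherwise'') branch the primed chain uses the auxiliary index $\mathcal{R}_3(i)\in\Ccal(i)$, but every site whose colour changes lies in $\Ccal(i)$, so there $d_i$ can only decrease or stay the same.

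The core of the argument is to show that from any state with $d_i=d\ge 2$ the distance strictly decreases in one step with probability at least $1/(N^2 d)$. Introduce the $3\times 3$ matrix $n_{c,c'}:=|\{j\in\Ccal(i):X_i(j)=c,\ X_i'(j)=c'\}|$. Because $X_i$ and $X_i'$ have the same colour counts and agree off $\Ccal(i)$, this matrix has zero diagonal, nonnegative entries, nonzero total, and equal row and column sums. Viewing its support as a loop-free digraph on $\{-1,0,1\}$, the equal-sum condition forces every vertex incident to an edge to have both an incoming and an outgoing edge, and a short case analysis of such digraphs on at most three vertices shows that the support must contain either a $2$-cycle, i.e. sites $j\ne k$ with $(X_i(j),X_i'(j))=(c,c')$ and $(X_i(k),X_i'(k))=(c',c)$, or a $3$-cycle, i.e. sites $j,k,l$ realizing $(c_1,c_2),(c_2,c_3),(c_3,c_1)$ for the three distinct colours. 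In either case choose $\mathcal{R}_1(i)=\mathcal{R}_3(i)=k$ and $\mathcal{R}_2(i)=j$: then one is in the ``otherwise'' branch with $X_{i+1}'=X_i'$ and $X_{i+1}=(j,k)X_i$. For a $2$-cycle this makes both $j$ and $k$ agree, so $d$ drops by $2$; for a $3$-cycle it makes $j$ agree while $k$ remains a disagreement, so $d$ drops by at least $1$. This particular draw of $(\mathcal{R}_1(i),\mathcal{R}_2(i),\mathcal{R}_3(i))$ has probability $\frac{1}{N}\cdot\frac{1}{N}\cdot\frac{1}{d}=\frac{1}{N^2 d}$, which is the claimed lower bound.

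To finish, since $d_i$ is non-increasing and, whenever positive, decreases by at least $1$ in a given step with probability at least $1/(N^2 d)$, the number of steps spent at level $d$ is stochastically dominated by a geometric random variable of mean $N^2 d$; since the levels that can be visited lie in $\{2,\dots,N\}$, summing gives $\E T_\Ccal\le\sum_{d=1}^{N}N^2 d=N^2\cdot\frac{N(N+1)}{2}\le N^4$. The one genuinely delicate step is the middle one: one must see that the ``otherwise'' branch — the only one complicated by the extra randomness $\mathcal{R}_3(i)$ — can always be steered so as to close off at least one disagreement, and this is precisely where the flow/tournament structure of $(n_{c,c'})$ is used. Everything else is routine case-checking together with a geometric-series estimate.
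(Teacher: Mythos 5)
Your proof follows the same coupling strategy as the paper's: track the Hamming distance $|\Ccal(i)|$, show it is non-increasing, bound below the per-step probability of a strict decrease, and sum a geometric series (coupon-collector style). Where you add real value is in the key existence step. The paper simply asserts that ``such $k_1$ and $k_2$ always exist''; your discrepancy-matrix argument --- observing that $n_{c,c'}$ has zero diagonal and equal row and column sums because $X_i,X_i'\in\Ccal$ agree off $\Ccal(i)$, hence its support digraph on $\{-1,0,1\}$ is Eulerian and must contain a $2$-cycle or a $3$-cycle --- supplies exactly the missing justification. The observation that one can set $\mathcal{R}_3(i)=\mathcal{R}_1(i)$ so that $X'$ is frozen while $X$ performs a transposition resolving (part of) a cycle is a clean way to exhibit the favourable event, and the same matrix constraint tells you $d_i=1$ is impossible, which the paper leaves implicit. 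Your per-step bound $1/(N^2 d)$ is sharper than the paper's uniform $1/N^3$, though both give $\E T_\Ccal\le N^4$. In short, this is correct, takes the same route, and in fact repairs the gap in the published sketch.
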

\begin{proof}
Define $\Psi(i):=|\Ccal(i)|$. Once $\Psi(i)=0$ the two chains have coupled. Due to the construction $\Psi$ is monotonically decreasing. Indeed, if $X_i(k)=X_i'(k)$ holds for one $i$ and a $k \in \{1, \ldots, N\}$, we will
have $X_j=X_j'$ for the position $k$ is  permuted to.
We further know
$$\P\big(\Psi(i+1)\leq j-1\big|\Psi(i)=j>0\big)\geq \frac{1}{N^3},$$
as all that needs to happen is, find two components $k_1$ and $k_2$
{ such that the chains differ at both positions and the number of differences can be reduced by at least one through exchanging spins in one of the chains. Such $k_1$ and $k_2$ always exist
 and we can }choose these with $\mathcal{R}_1$ and $\mathcal{R}_2$ which happens with probability $\frac{1}{N^2}$. In this case $\mathcal{R}_3$ would be drawn out of all components in which $X_i$ and $X_i'$ differ. There are at most $N$ of those. Using \cite[Chapter 4-3, Lemma 1]{AldousFill:book} we get an upper bound of
$$\E T_\Ccal\leq \sum_{i=1}^{N} N^3=N^4$$
for the coupling time.
\end{proof}

\section{Existence of $\Klow$}\label{existenceKlow}
\newcommand{\Kco}{K_c^{(1)}}
\newcommand{\Kct}{K_c^{(2)}}
As \cite{EllisOttoTouchetteBEG} did not completely prove the existence of $\Klow:=\lim_{\beta \rightarrow +\infty} \Kco(\beta)$, we will do so in this section.
\begin{lemma}\label{Lemma_Kco_continuous}
The function
\begin{align*}
\Kco:& (\beta_c,\infty) &\longrightarrow &&\R\notag\\
&\beta &\mapsto&& \Kco(\beta)\notag
\end{align*}
is continuous.
\end{lemma}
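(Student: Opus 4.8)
The plan is to exhibit $\Kco(\beta)$ as the point at which a jointly real-analytic function of $(\beta,K)$ changes sign, and then to conclude by the elementary persistence of strict inequalities. Write $f_{\beta,K}$ for the function $f_\beta$ of \eqref{EnergyLandscape} with its dependence on $K$ displayed; it is real-analytic on the open simplex and extends continuously to $\Upsilon_\infty$ because $a\log a\to 0$ as $a\downarrow 0$, so all its maxima are interior by Theorem~\ref{Theorem_Holger}. A useful preliminary remark is that on the symmetry axis $\{a_{-1}=a_1\}$ the interaction term $K(a_1-a_{-1})^2$ vanishes, so $f_{\beta,K}$ is $K$-independent there; a direct computation shows that the point $a_{\max}(0)=\bigl(\tfrac{e^{-\beta}}{1+2e^{-\beta}},\tfrac1{1+2e^{-\beta}},\tfrac{e^{-\beta}}{1+2e^{-\beta}}\bigr)$ is the unique critical point, hence the unique maximiser, of $f_{\beta,K}$ along that axis, and its value $v(\beta):=f_{\beta,K}(a_{\max}(0))$ is $K$-independent and real-analytic in $\beta$.

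First I would fix $\beta_0\in(\beta_c,\infty)$ and set $K_0:=\Kco(\beta_0)$. By the analysis of \cite{EllisOttoTouchetteBEG} the bifurcation at $K_0$ is first order: $f_{\beta_0,K_0}$ has, besides $a_{\max}(0)$, a symmetric pair of \emph{nondegenerate} interior local maxima $a^\ast,\bar a^\ast$ (mirror images across $\{a_{-1}=a_1\}$, say with $a^\ast_1>a^\ast_{-1}$) which, together with $a_{\max}(0)$, all attain the common value $v(\beta_0)$. Applying the implicit function theorem to the constrained equation $\nabla_{\mathbf a}f_{\beta,K}(\mathbf a)=0$ at the nondegenerate critical point $a^\ast$ produces a neighbourhood $U\times V$ of $(\beta_0,K_0)$ and a real-analytic branch $(\beta,K)\mapsto a^\ast(\beta,K)$ with $a^\ast(\beta_0,K_0)=a^\ast$; shrinking $U\times V$ we may assume that along this branch the Hessian stays negative definite and $a^\ast_1(\beta,K)>a^\ast_{-1}(\beta,K)$, so that $a^\ast(\beta,K)$ is always a local maximum, and by Theorem~\ref{Theorem_Holger} and the reflection symmetry it and its mirror image are the \emph{only} off-axis local maxima of $f_{\beta,K}$ on $U\times V$.

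Now set $\psi(\beta,K):=f_{\beta,K}\bigl(a^\ast(\beta,K)\bigr)-v(\beta)$ on $U\times V$; this is real-analytic. Since $a^\ast(\beta,K)$ is a critical point of $f_{\beta,K}(\cdot)$, the envelope theorem gives $\partial_K\psi(\beta,K)=\bigl(\partial_Kf_{\beta,K}\bigr)\bigl(a^\ast(\beta,K)\bigr)=\beta\bigl(a^\ast_1(\beta,K)-a^\ast_{-1}(\beta,K)\bigr)^2>0$, so $\psi(\beta,\cdot)$ is strictly increasing. Moreover the unimodal/bimodal/trimodal classification of \cite{EllisOttoTouchetteBEG} gives $\sgn\psi(\beta,K)=\sgn\bigl(K-\Kco(\beta)\bigr)$ throughout $U\times V$: for $K<\Kco(\beta)$ the model is unimodal, so $a_{\max}(0)$ is the unique global maximiser and $a^\ast(\beta,K)$ a strictly lower local maximum, whence $\psi<0$; for $K>\Kco(\beta)$ it is bimodal with its two maxima being precisely the off-axis pair, so $\max f_{\beta,K}=f_{\beta,K}(a^\ast(\beta,K))>v(\beta)$ and $\psi>0$; and $\psi=0$ exactly at the coexistence value $K=\Kco(\beta)$. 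Continuity at $\beta_0$ then follows in one line: choose $\epsilon>0$ with $[K_0-\epsilon,K_0+\epsilon]\subset V$; then $\psi(\beta_0,K_0-\epsilon)<0<\psi(\beta_0,K_0+\epsilon)$, and by joint continuity there is a neighbourhood $U'\subseteq U$ of $\beta_0$ on which both strict inequalities persist, forcing $K_0-\epsilon<\Kco(\beta)<K_0+\epsilon$ for all $\beta\in U'$. As $\beta_0$ and $\epsilon$ were arbitrary, $\Kco$ is continuous on $(\beta_c,\infty)$; in fact the implicit function theorem applied to $\psi=0$, using $\partial_K\psi\neq0$, shows it is real-analytic.

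I expect the main obstacle to be the input quoted from \cite{EllisOttoTouchetteBEG} in the second paragraph: that the off-axis maxima at the transition are \emph{nondegenerate} (so the implicit function theorem produces the branch $a^\ast(\beta,K)$) and that $\Kco(\beta)$ is genuinely the equal-height coexistence value, not the locus of a higher-order degeneracy. This is exactly the content of the discontinuous, first-order character of the bifurcation, so it should be extractable from their Sections~3 and~5, but it has to be stated in the form used here. Should one wish to avoid invoking nondegeneracy, one could instead run a compactness argument — selecting off-axis maximisers along sequences $\beta_n\to\beta_0$ and passing to the limit with the help of Theorem~\ref{Theorem_Holger} — but then one must still supply the coexistence characterisation of $\Kco$ and, more delicately, rule out that the chosen off-axis maximiser collapses onto the symmetry axis in the limit; the branch construction above sidesteps precisely this difficulty. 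Everything else (real-analyticity of $f_{\beta,K}$, the envelope theorem, joint continuity of a maximum of a continuous family) is soft.
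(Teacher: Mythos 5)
Your proof is correct in outline and takes a genuinely different route from the paper's. The paper argues by contradiction: assuming a jump in $\Kco$ at some $\beta_d$, it fixes a $K_d$ strictly between $\lim\Kco(\beta_i)$ and $\Kco(\beta_d)$ and then plays off the number of global maxima of $f_{\beta_i,K_d}$ against that of $f_{\beta_d,K_d}$ under the pointwise limit $f_{\beta_i}\to f_{\beta_d}$, invoking the Ellis--Otto--Touchette classification together with Theorem~\ref{Theorem_Holger} to reach a contradiction. That argument is soft and short but does not attempt any quantitative control of the bifurcation. You instead parametrise the first-order coexistence locus as the zero set of the analytic ``height-gap'' function $\psi(\beta,K)=f_{\beta,K}(a^\ast(\beta,K))-v(\beta)$, where $a^\ast$ is a branch of off-axis critical points produced by the implicit function theorem and $v(\beta)$ is the $K$-independent on-axis maximal value; the envelope computation $\partial_K\psi=\beta(a_1^\ast-a_{-1}^\ast)^2>0$ then identifies $\Kco(\beta)$ as a transversal zero and hands you not only continuity but real-analyticity of $\Kco$ on $(\beta_c,\infty)$, a strictly stronger conclusion than the lemma asks for (and also stronger than the paper's Corollary~\ref{Klow} gets by pairing this lemma with the monotonicity lemma). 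The trade-off is that your argument requires, as input, that the off-axis maxima at $K=\Kco(\beta_0)$ are nondegenerate and that $\Kco$ really is the equal-height coexistence value; you flag both correctly. Neither is explicitly recorded in this paper --- the Hessian computation in the appendix's proof of Theorem~\ref{Theorem_Holger} only verifies positive definiteness of $M$ in the regimes $4\beta K\ge 2+e^\beta$, whereas the first-order point $K^{(1)}_c(\beta)$ lies strictly below $K^{(2)}_c(\beta)=(2+e^\beta)/(4\beta)$, so the relevant nondegeneracy would have to be extracted from \cite{EllisOttoTouchetteBEG} (or supplied by extending the appendix's determinant analysis to the case $4\beta K<2+e^\beta$). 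If you want to stay entirely within what the present paper proves, the compactness alternative you sketch is closer in spirit to the paper's own contradiction argument, but then, as you note, one still has to rule out collapse of the off-axis maximiser onto the symmetry axis, which is exactly the content of the first-order (discontinuous) character of the bifurcation --- the same external input in different clothing.
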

\begin{proof}
As shown in \cite{EllisOttoTouchetteBEG} $\Kco(\beta_c)=\Kct(\beta_c)$. It is also shown that $\Kct$ is continuous and monotonically decreasing on its domain.

Assume $\Kco$ to not be continuous. Then there exists a $\beta_d\geq\beta_c$ such that either $\Kco$ is discontinuous at $\beta_d=\beta_c$ or such that $\Kco$ is discontinuous at $\beta_d$ and continuous for all $\beta\in[\beta_c,\beta_d)$. Then there exists a monotonic sequence $(\beta_i)_i$ with $\beta_i\neq \beta_d$, $\lim \beta_i=\beta_d$ and $\lim\Kco(\beta_i)\neq \Kco(\beta_d)$.
\begin{enumerate}
 \item Suppose first that $\lim\Kco(\beta_i) < \Kco(\beta_d)$. Fix $K_d\in(\lim\Kco(\beta_i), \Kco(\beta_d))$. The analysis given by Ellis et al. in \cite{EllisOttoTouchetteBEG} guarantees the BEG state space for $(K_d,\beta_d)$ to have exactly one macrostate while for all but finitely many $i$ the BEG state space for $(K_d,\beta_i)$ has exactly two modes. Have $f_\beta$ as defined in (\ref{EnergyLandscape}). It is smooth and clearly, for $K=K_d$, we have the functional limit $$\lim_{\beta\uparrow\beta_d} f_{\beta}=f_{\beta_d}.$$ Thus in this case $f_{\beta_d}$ has either exactly one global maximum or exactly three global maxima.
 \item The second case for $\lim\Kco(\beta_i) > \Kco(\beta_d)$ works the same.
\end{enumerate}
\end{proof}

\begin{lemma}
The function
\begin{align*}
\Kco:& (\beta_c,\infty) &\longrightarrow &&\R\notag\\
&\beta &\mapsto&& \Kco(\beta)\notag
\end{align*}
is monotonic.
\end{lemma}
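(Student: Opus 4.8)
The plan is to strengthen the statement: I would show that $\Kco$ is in fact \emph{strictly decreasing}. Since $\Kco$ is continuous by Lemma \ref{Lemma_Kco_continuous}, it is enough to show that $\Kco$ is $C^1$ with a derivative of one fixed (nonzero) sign. To set this up, recall from \cite{EllisOttoTouchetteBEG} that near a point of the first-order curve $f_\beta$ has, besides the symmetric critical point $a_{\max}(0)(\beta)=\bigl(\tfrac1{e^\beta+2},\tfrac{e^\beta}{e^\beta+2},\tfrac1{e^\beta+2}\bigr)$, a nondegenerate off-axis local maximum $\mathbf a^{*}(\beta,K)$ depending smoothly on $(\beta,K)$, and that $\Kco(\beta)$ is characterised by $h(\beta,K):=f_{\beta,K}(\mathbf a^{*})-f_{\beta,K}(a_{\max}(0))=0$, with $h<0$ below and $h>0$ above the curve, where $f_{\beta,K}(\mathbf a)=\beta(-a_{-1}-a_1+K(a_1-a_{-1})^2)-\sum_i a_i\log a_i$. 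Because both points are interior critical points of $f_{\beta,K}(\cdot)$, the envelope theorem yields that $h$ is $C^1$ near the curve with $\partial_K h=\beta(m^{*})^2$ (where $m^{*}=a_1^{*}-a_{-1}^{*}$) and $\partial_\beta h=\Delta e$, the gap between the energy densities $\partial_\beta f(\mathbf a)=-a_{-1}-a_1+K(a_1-a_{-1})^2=H(N\mathbf a)/N$ at $\mathbf a^{*}$ and at $a_{\max}(0)$, i.e.\ $\Delta e=K(m^{*})^2+a_0^{*}-a_0^{(0)}$ with $a_0^{(0)}=1/(1+2e^{-\beta})$.

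Since the bifurcation at $\Kco(\beta)$ is discontinuous, $\mathbf a^{*}$ stays bounded away from $a_{\max}(0)$ on the curve, so $m^{*}\neq0$ and $\partial_K h=\beta(m^{*})^2>0$; hence the implicit function theorem gives that $\Kco$ is $C^1$ with
$$\frac{d\Kco}{d\beta}=-\frac{\partial_\beta h}{\partial_K h}=-\frac{\Delta e}{\beta(m^{*})^2}.$$
Thus the lemma reduces to the claim that the energy-density gap $\Delta e$ keeps one fixed sign along the first-order curve; I would show $\Delta e>0$ there, which makes $\Kco$ strictly decreasing. To reduce this to something checkable, use the critical-point relations $m^{*}=q^{*}\tanh(2\beta K m^{*})$ and $a_0^{*}=e^{\beta}\sqrt{a_1^{*}a_{-1}^{*}}$ (with $q=1-a_0$), which after a direct computation give $f_{\beta,K}(\mathbf a^{*})=-\beta K(m^{*})^2-\log a_0^{*}$, while $f_{\beta,K}(a_{\max}(0))=\log(1+2e^{-\beta})=-\log a_0^{(0)}$. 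Hence on the curve $h=0$ reads $\beta K(m^{*})^2=\log(a_0^{(0)}/a_0^{*})$ (in particular $a_0^{*}\le a_0^{(0)}$), and substituting this into $\Delta e$ shows that $\Delta e>0$ is equivalent to the scalar inequality
$$\log\frac{a_0^{(0)}}{a_0^{*}}>\beta\,(a_0^{(0)}-a_0^{*}).$$
This I would establish using the remaining relations $a_0^{(0)}=1/(1+2e^{-\beta})$, $a_0^{*}=e^{\beta}\sqrt{a_1^{*}a_{-1}^{*}}$ and $a_1^{*}/a_{-1}^{*}=e^{4\beta K m^{*}}$ to bound $a_0^{*}$ from above in terms of $\beta$.

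The main obstacle is exactly this last inequality. It is tight at leading order as $\beta\to\infty$ (both sides behave like $\beta$, since $a_0^{*}$ is then of order $e^{-\beta}$), so one must track the subleading terms, and one must also cover the range near $\beta_c=\log4$, where the expansion $m^{*}\to0$ gives $\Delta e\approx K(m^{*})^2\bigl(1-\beta a_0^{(0)}\bigr)$ with $\beta a_0^{(0)}\big|_{\beta_c}=\tfrac23\log4<1$, hence $\Delta e>0$. An alternative route, in the spirit of the proof of Lemma \ref{Lemma_Kco_continuous}, is a contradiction argument: if $\Kco$ were not monotonic, continuity would produce $\beta_a<\beta_b$ and a value $K^{*}$ with $\Kco(\beta_a)=\Kco(\beta_b)=K^{*}$ and, say, $\Kco>K^{*}$ on $(\beta_a,\beta_b)$; then at $K=K^{*}$ the function $f_{\beta,K^{*}}$ would have three global maxima at $\beta_a$ and $\beta_b$ but only the symmetric one for interior $\beta$, a re-entrant pattern that one rules out using smoothness of $\beta\mapsto f_\beta$, convexity of $\varphi(\beta)=\max_{\mathbf a}f_{\beta,K^{*}}(\mathbf a)$, and Theorem \ref{Theorem_Holger} (at most three local maxima of $f_\beta$ on $\Upsilon_\infty$). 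In either approach the heart of the matter is the same sign statement for the energy gap $\Delta e$ along the curve.
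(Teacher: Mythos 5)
Your proposal identifies a genuinely useful structural fact: applying the implicit function theorem to $h(\beta,K)=f_{\beta,K}(\mathbf a^*)-f_{\beta,K}(a_{\max}(0))=0$ and using the envelope theorem gives the clean formula $\Kco'(\beta)=-\Delta e/\bigl(\beta(m^*)^2\bigr)$, and you correctly note that the denominator is bounded away from zero because the bifurcation at $\Kco(\beta)$ is discontinuous. But as you yourself flag, you do not actually prove the inequality $\Delta e>0$ that the whole approach hinges on; you only sketch leading-order asymptotics in $\beta$ and a small-$m^*$ expansion near $\beta_c$, neither of which covers the full curve. That is a genuine gap, not a routine verification. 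You also miss an essentially tautological observation that already delivers the lemma as stated (non-strict monotonicity): on the curve, $g(\beta):=f_\beta(\mathbf a^*)-f_\beta(a_{\max}(0))$ is $<0$ just below and $>0$ just above $\beta_c^{(1)}(K)$, so its $\beta$-derivative $\Delta e$ is $\ge 0$ there, hence $\Kco'\le 0$ without any computation. Your insistence on strict decrease makes the task harder than what the lemma requires.

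The paper's actual proof is a short contradiction argument that goes a different route and avoids any pointwise sign estimate along the curve. If $\Kco$ were not monotonic, continuity (the previous lemma) would produce $\beta_1<\beta_2<\beta_3$ and a level $K^*=\Kco(\beta_1)=\Kco(\beta_3)$ such that the model at $K^*$ is trimodal at $\beta_1$ and $\beta_3$ but not in between, i.e.\ $f_\beta(a_{\max}(1))-f_\beta(a_{\max}(0))$ would vanish twice. This contradicts Lemma~\ref{Lemma_monotonc-behaviour-of-A_g}, whose proof shows that $f_\beta(a_{\max}(0))$ is decreasing in $\beta$ (direct computation) while $f_\beta(a_{\max}(1))=\phi(\beta)$ is increasing (via a curvature property of the limiting free energy together with $\partial_\beta f_\beta\to K-1>0$ at $(0,0,1)$); hence the difference is strictly increasing and can cross zero at most once. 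Your ``alternative route'' is in the same spirit, but you then assert that it ``reduces to the same sign statement for $\Delta e$,'' which misreads the logic: the paper proves a global-in-$\beta$ monotonicity of the gap using properties of $\phi$, not a local sign at the transition. Finally, you call $\phi$ convex, whereas the paper invokes concavity; whichever is correct, that discrepancy should be resolved before either proof is taken as complete, since the monotonicity of $\phi'$ is exactly what the paper's argument uses.
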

\begin{proof}
Assume $\Kco$ not to be monotonic. Then there exist $\beta_1<\beta_2<\beta_3<\beta_4$ such that $\Kco(\beta_4)>\Kco(\beta_1)=\Kco(\beta_3)>\Kco(\beta_2)$ as $\Kco$ is continuous as shown in Lemma \ref{Lemma_Kco_continuous}. This guarantees that the BEG model has at least two phase transitions for $\Kco(\beta_1)$. With the analysis done by Ellis et al. in \cite{EllisOttoTouchetteBEG} it is however clear where exactly the macrostates lie. Thus the first phase transition of the model must switch from one to two modes and the second the model exhibits for growing $\beta$ must change back to exactly one mode. This is in clear violation of Lemma \ref{Lemma_monotonc-behaviour-of-A_g} of this paper.
\end{proof}

\begin{corollary} \label{inverse}
The proof given in Section 5 by Ellis et al. in \cite{EllisOttoTouchetteBEG} is correct if $\Kco$ is inverted only on $\operatorname{Im}(\Kco(\beta_c,\infty))$.
\end{corollary}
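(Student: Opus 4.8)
The plan is to read the corollary off the two lemmas just proved. By Lemma \ref{Lemma_Kco_continuous} and the preceding monotonicity lemma, the map $\Kco$ on $(\beta_c,\infty)$ is continuous and monotonic. Since it extends continuously to $\beta_c$ with value $\Kct(\beta_c)=K_c$, since $\Kct$ is monotonically decreasing, and since $\Kco$ is bounded, it is in fact decreasing and has a finite limit at $+\infty$, which is by definition $\Klow$; moreover $\Klow<K_c$, because \cite{EllisOttoTouchetteBEG} exhibits a nonempty first-order regime, i.e. $\Kco$ is non-constant. I would first make these facts precise, checking in particular the boundary behaviour $\Kco(\beta)\to K_c$ as $\beta\to\beta_c^{+}$ and $\Kco(\beta)\to\Klow$ as $\beta\to\infty$.

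Next I would record the identity $\operatorname{Im}(\Kco)=(\Klow,K_c)$, where $\Kco$ is regarded as a function on $(\beta_c,\infty)$: continuity forces the image to be an interval, the two one-sided limits place it inside $[\Klow,K_c]$, and the intermediate value theorem together with monotonicity shows that every value strictly between $\Klow$ and $K_c$ is attained, while the endpoints are approached only in the limits $\beta\to\beta_c^{+}$ and $\beta\to\infty$ and hence are not attained on the open domain. I would then argue that $\Kco$ is \emph{strictly} decreasing, so that it is a bijection of $(\beta_c,\infty)$ onto $(\Klow,K_c)$ with a continuous, strictly decreasing inverse $K\mapsto\beta_c^{(1)}(K)$ defined exactly for $K\in(\Klow,K_c)$. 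With this inverse at hand, every step of Section 5 of \cite{EllisOttoTouchetteBEG}, which only ever applies the inverse of $\Kco$ at a parameter $K$ for which such a $\beta$ exists, is literally valid; this is precisely the content of the corollary. Conversely, if $K\le\Klow$ there is no $\beta>\beta_c$ with $\Kco(\beta)=K$, so the inversion produces no first-order threshold $\beta_c^{(1)}(K)$, which is the reason the regime treated in Theorem \ref{Kintermediate} must be $\Klow<K<K_c$ rather than $0<K<K_c$.

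The main obstacle is upgrading the weak monotonicity furnished by the preceding lemma to strict monotonicity, since it is the single-valuedness of the inverse that is tacitly used in Section 5 of \cite{EllisOttoTouchetteBEG}. I would handle this by the same mechanism as the monotonicity lemma: a nondegenerate flat piece, say $\Kco\equiv v$ on some $[\beta',\beta'']$, would pin the trimodal (three-phase) configuration of \cite{EllisOttoTouchetteBEG} to the entire horizontal segment $\{v\}\times[\beta',\beta'']$ of parameter space, contradicting the description of the equilibrium macro-states recalled in Section \ref{section_Results} (for each fixed $\beta$, three distinct phases occur only at the single threshold $\Kco(\beta)$) together with Lemma \ref{Lemma_monotonc-behaviour-of-A_g}, which rules out the number of modes oscillating along such a segment. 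Once strictness is in place, the remainder is a routine application of the intermediate value theorem.
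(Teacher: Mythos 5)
Your overall plan matches the paper's implicit one: Corollary \ref{inverse} is stated in the paper without a displayed proof, simply as a consequence of the two preceding lemmas (continuity and monotonicity of $\Kco$), and your proposal is exactly that argument spelled out — $\Kco$ is continuous, monotonic, and decreases from $K_c$ at $\beta_c$ to $\Klow$ at infinity, so it carries $(\beta_c,\infty)$ onto an interval on which the formula $\beta_c^{(1)}(K)=\Kco^{-1}(K)$ makes sense, which is precisely the content of the corollary.

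However, your proposed upgrade from weak to strict monotonicity does not hold water. You argue that a flat piece $\Kco\equiv v$ on $[\beta',\beta'']$ would ``contradict the description of the equilibrium macro-states recalled in Section \ref{section_Results} (for each fixed $\beta$, three distinct phases occur only at the single threshold $\Kco(\beta)$)''. But this is not a contradiction: for every $\beta\in[\beta',\beta'']$ the single threshold is $\Kco(\beta)=v$, so trimodality of $\mathcal{E}_{\beta,v}$ for all such $\beta$ is perfectly consistent with the per-$\beta$ description in \cite{EllisOttoTouchetteBEG}. Moreover, Lemma \ref{Lemma_monotonc-behaviour-of-A_g} is proved in the continuous-transition regime $K>K_c$ and addresses mass migration between $\A_g$ and $\A_l$ there, not the number of coexisting modes along a constant-$K$ segment with $K<K_c$; it does not rule out a flat piece. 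So the strictness claim, which you correctly identify as the point the paper glosses over (the paper's monotonicity lemma only delivers weak monotonicity), is left unproved by your argument. If one wants strict monotonicity, one would have to extract it from the explicit analysis of $f_\beta$ (e.g.\ the structure of the critical points worked out in the appendix on $f_\beta$), or simply adopt the convention of taking, say, the infimum of the preimage, which still makes Ellis et al.'s Section 5 go through and which is really all the corollary, read charitably, is asserting.
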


\begin{corollary} \label{Klow}
The limit of $\Kco(\beta)$ as $\beta \rightarrow +\infty$ exists.
\end{corollary}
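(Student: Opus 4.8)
The plan is to derive the corollary directly from the two preceding lemmas together with a soft boundedness estimate: a monotone real-valued function on a half-line has a limit in $[-\infty,+\infty]$ at infinity, and once it is known to be bounded this limit is finite. So the only thing that needs to be added to what has already been proved is that $\Kco$ is bounded on $(\beta_c,\infty)$.

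First I would record the lower bound. Since $\Kco(\beta)$ is a critical value of the parameter $K$, which is positive throughout the paper, we have $\Kco(\beta)\ge 0$; in fact $\Kco(\beta)>0$, since for $K$ close to $0$ the landscape $f_\beta$ from \eqref{EnergyLandscape} is dominated by the term $-(a_{-1}+a_1)$ and is unimodal, so $\mathcal{E}_{\beta,K}$ has a single macrostate for small $K$. For the upper bound I would invoke the global description of $\mathcal{E}_{\beta,K}$ established by Ellis et al. \cite{EllisOttoTouchetteBEG}: for any $K>K_c$ the model has a second-order phase transition at some $\beta_c^{(2)}(K)<\beta_c$ and is bimodal for every $\beta>\beta_c^{(2)}(K)$, hence for every $\beta>\beta_c$ (this persistence of the two modes as $\beta$ increases is also reflected in Lemma \ref{Lemma_monotonc-behaviour-of-A_g}). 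Bimodality at $(\beta,K)$ means $K>\Kco(\beta)$; letting $K\downarrow K_c$ gives $\Kco(\beta)\le K_c$ for every $\beta>\beta_c$. Thus $\Kco$ maps $(\beta_c,\infty)$ into the bounded interval $(0,K_c]$.

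With this in hand the conclusion follows at once: by the monotonicity lemma $\Kco$ is monotone on $(\beta_c,\infty)$, and a bounded monotone function converges at $+\infty$, so $\Klow=\lim_{\beta\to+\infty}\Kco(\beta)$ exists and lies in $[0,K_c]$ (consistent with the conjectured value $\Klow=1$); note that continuity of $\Kco$ is not even needed for this last step, only for Lemma \ref{Lemma_Kco_continuous} that feeds into the monotonicity lemma. The one genuinely delicate point — and the place where care is required — is the upper bound $\Kco\le K_c$: because $\Kco$ is only defined implicitly in \cite{EllisOttoTouchetteBEG} with no closed form, it cannot be bounded by direct computation and must instead be read off the phase diagram, where one must check that $\beta_c^{(2)}(K)<\beta_c$ for $K>K_c$ so that the bimodal region of such a $K$ indeed covers all of $(\beta_c,\infty)$. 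Everything else is routine real analysis.
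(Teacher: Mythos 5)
Your proposal is correct and fills in exactly the argument the paper leaves implicit: Corollary~\ref{Klow} is stated as a consequence of Lemma~\ref{Lemma_Kco_continuous} and the monotonicity lemma, with the standard ``monotone plus bounded $\Rightarrow$ convergent'' step left to the reader, and you supply precisely that. Your upper bound $\Kco(\beta)\le K_c$ via the phase diagram for $K>K_c$ is a sensible and careful choice since the paper's monotonicity lemma does not record the \emph{direction} of monotonicity; otherwise one would simply combine $\Kco(\beta_c)=\Kct(\beta_c)=K_c$ (noted in the proof of Lemma~\ref{Lemma_Kco_continuous}) with the fact that $\Kco$ is nonincreasing.
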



\section{Analysis of $f_\beta$}
This appendix contains a detailed analysis of the function $f_\beta$ given in \eqref{EnergyLandscape}.
The first result  we prove in this appendix is the Theorem \ref{Theorem_Holger}.
We first change coordinates.
Let $r=\frac{x}{x+z}$ and $t=x+z$. Then the mapping is
\[
T:\Upsilon_\infty \to(0,1)^2\mbox{ with }
(a_{-1},a_0,a_1)\mapsto(r,t)
\]
bijective. Hence, instead of investigating the maxima of $f_\beta$, we can analyze the
minima of $F(r,t):= F_\beta(r,t) :=-f_\beta\circ T^{-1}(r,t)$.
Here $F:(0,1)^2\to \R$ is given by
\[
F(r,t)=\beta t(1-Kt(1-2r)^2)+tH(r)+H(t),
\]
with $H(r)=r\log r+(1-r)\log(1-r)$.

\medskip

\textbf{Minimums at the boundary:} For fixed $r\in[0,1]$ the function $F$ is the sum of a polynomial in $t$ and  the entropy function $H(t)$. Now $H(t)$ is steep at $t=0$ and $t=1$, hence there are no local minima in these points.

If, on the other hand, $t\in(0,1)$ is fixed, the same argument yields that there are no local minima in $r=0$ and $r=1$,
either.

\medskip

\textbf{Global and local Minimums:} We take derivatives of $F$ for $r,t\in(0,1)$.
\[
\begin{array}{ll}
\partial_rF(r,t)=&4\beta Kt^2(1-2r)+t\log\frac{r}{1-r}\\
\partial_tF(r,t)=&\beta-2\beta Kt(1-2r)^2+H(r)+\log\frac{t}{1-t}\\
\partial_r^2F(r,t)=&-8\beta Kt^2+\frac{t}{r(1-r)}\\
\partial^2_{rt}F(r,t)=&8\beta Kt(1-2r)+\log\frac{r}{1-r}\\
\partial^2_tF(r,t)=&-2\beta K(1-2r)^2+\frac{1}{t(1-t)}
\end{array}
\]
Hence the equations for potential minima are
\begin{equation}\label{eins}
4\beta Kt(2r-1)=\log\frac{r}{1-r}
\end{equation}
\begin{equation}\label{zwei}
\frac{1}{t}-1=e^\beta\cdot\sqrt{r(1-r)}\ ,
\end{equation}
where we have used \eqref{eins} to solve $\partial_tF=0$ and obtain \eqref{zwei}.
Taking the Taylor expansion of $F$ in a critical point $(r_0,t_0)$ up to second order we see that  $$F(r,t)=F(r_0,t_0)+\frac{1}{2}A$$
where
\[
A=\partial_r^2F(r_0,t_0)(r-r_0)^2+2\partial_{rt}^2F(r_0,t_0)(r-r_0)(t-t_0) +\partial^2_tF(r_0,t_0)(t-t_0)^2.
\]
Putting $w:=\sqrt{r_0(1-r_0)}$ we see that $t_0=(1+e^\beta w)^{-1}$ and therefore
\begin{equation}\label{rzwei}
\partial_r^2F(r_0,t_0)=\frac{t_0^2}{w^2}(1+e^\beta w-8\beta Kw^2).
\end{equation}
Due to \eqref{eins} we have in critical points $(r_0, t_0)$
\[
\partial^2_{rt}F(r_0,t_0)=4\beta Kt_0(1-2r_0)
\]
and the determinant of the Hessian $M$ in $(r_0,t_0)$ is given by
\[
\det M=\Big(\frac{t_0}{w^2}-8\beta Kt_0^2\Big)\Big(\frac{1}{t_0(1-t_0)}-2\beta K(1-4w^2)\Big)
-(4\beta Kt_0)^2(1-4w^2).
\]
This can be simplified to
\begin{eqnarray*}
\det M&=&\Big(\frac{1}{w^2}-8\beta Kt_0\Big)\frac{1}{1-t_0}-
2\beta K\frac{t_0}{w^2}(1-4w^2)\\
&=&\frac{1-2\beta K t_0+2\beta Kt_0^2(1-4w^2)}{w^2(1-t_0)}
\end{eqnarray*}
and by replacing $t_0$ we obtain:
\begin{eqnarray}\label{signum}
\det M&=&\frac{(1+e^\beta w)^2-2\beta K(1+e^\beta w)+2\beta K(1-4w^2)}{w^2(1-t_0)(1+e^\beta w)^2}\nonumber\\
&=&\frac{1+2e^\beta w(1-\beta K)+w^2(e^{2\beta}-8\beta K)}{w^2(1-
t_0)(1+e^\beta w)^2}.
\end{eqnarray}
Note that the sign of $\det M$ is determined by the sign of the nominator, which is important, since $M$ is positive
definite in $(r_0, t_0)$, if $\partial_r^2F>0$ and $\det M>0$ in that point.

Investigating which points are critical, we see the following

\begin{enumerate}
\item Obviously, $r_0=\frac{1}{2}$ , $t_0=\frac{2}{2+e^\beta}$ is critical. Here $\partial_r^2F(r_0,t_0)=2t_0^2(2+e^\beta-4\beta K)$ and hence
\[
A=2t_0^2(2+e^\beta-4\beta K)(r-r_0)^2+\frac{1}{t_0(1-t_0)}(t-t_0)^2.
\]
Thus there is a local minimum of $F$ in $(r_0,t_0)$, if and only if $4\beta K\le2+e^\beta$.
If $4\beta K>2+e^\beta$, $(r_0, t_0)$ as defined above is not an extremal point.
\item For $r\ne\frac{1}{2}$, we only consider $r\in I:=(\frac{1}{2},1)$, since $F$ is symmetric in $r$ around $\frac{1}{2}$.

Combining \eqref{eins} and \eqref{zwei} we see that a necessary condition for $(r,t)$ to be a local minimum is
\begin{equation}\label{drei}
h(r):=\log\frac{r}{1-r}=\frac{4\beta K(2r-1)}{1+e^\beta\sqrt{r(1-r)}}
:=\phi(r),
\end{equation}
which we will investigate for solutions in $I$. Let $w(r):=\sqrt{r(1-r)}$. We compute

\begin{eqnarray*}
h'(r)=\frac{1}{r}+\frac{1}{1-r}=\frac{1}{w^2(r)}\\
h''(r)=-\frac{1}{r^2}+\frac{1}{(1-r)^2}=\frac{2r-1}{w^4(r)}
\end{eqnarray*}
and
\[
\phi'(r)=4\beta K\frac{2+2e^\beta w(r)-(2r-1)e^\beta\frac{(1-2r)}{2w(r)}}{
(1+e^\beta w(r))^2}=2\beta K\frac{4w(r)+e^\beta}{w(r)(1+e^\beta w(r))^2}
\]
and eventually
\begin{eqnarray*}
\phi''(r)\!\!\!\!\!&=&\!\!\!2\beta K\frac{4w'(r)w(r)(1+e^\beta w(r))^2-(4w(r)+e^\beta)
[w(r)(1+e^\beta w(r))^2]'}{w^2(r)(1+e^\beta w(r))^4}\\
&=&\!\!\!2\beta Kw'(r)\frac{4w(r)(1+e^\beta w(r))-(4w(r)+e^\beta)[
1+e^\beta w(r)+2w(r)e^\beta]}{w^2(r)(1+e^\beta w(r))^3}\\
&=&\!\!\!\beta Ke^\beta\frac{(2r-1)(8w^2(r)+3e^\beta w(r)+1)}{w^3(r)(1+e^\beta w(r))^3}.
\end{eqnarray*}

Now $h'(r){\substack{<\\=\\>} }\phi'(r)$ implies
\begin{equation}\label{erst}
(e^{2\beta}-8\beta K)w^2(r)+2e^\beta(1-\beta K)w(r)+1{\substack{<\\=\\>} }0.
\end{equation}
Hence there are at most two solutions $r_1,r_2\in I$ with $\phi'=h'$, because $w$ is injective on $I$.
Therefore, according to Rolle's theorem also the equation $\phi=h$ has at most two further solutions in $I$ (next to $r=1/2$). Moreover, we see that the left hand side of \eqref{erst} equals the nominator of $\det M$ in \eqref{signum}. In a critical point we thus have $h'<\phi'$ (or $h'>\phi'$, respectively) if and only if in this point it holds $\det M<0$ (or $\det M>0 $, respectively).

Again we distinguish different cases:

If $4\beta K>2+e^\beta$, then $\phi '(1/2)>h'(1/2)$ and thus $\phi>h$ on $(1/2,1/2+\delta)$ for an appropriate $\delta>0$. Now, close to $r=1$ we  always have $\phi<h$, which means, there is at least one solution $\phi=h$ in $I$. However, there cannot be two such solutions: If there were $\frac{1}{2}<r_1<r_2<1$ with $\phi=h$, then
$\phi-h$ cannot change sign in both solutions, otherwise we would have $\phi>h$ also in a right neighborhood of $r_2$ and we would need a third solution $r_3$ to the right of $r_2$, in contradiction to the above conclusion.
If, on the other hand, $\phi-h$ cannot change sign in both solutions, then at least one of $r_1$ and $r_2$ also solves $\phi'=h'$. But this again leads to a contradiction. Again using Rolle's theorem we see that $\phi=h$ for
$\frac{1}{2}<r_1<r_2$ implies that there exist $\xi_1,\xi_2$ with $\phi'=h'$ and
\[
\frac{1}{2}<\xi_1<r_1<\xi_2<r_2
\]
and there cannot be more than two solutions of $\phi'=h'$.

Hence there is exactly one solution $r_1\in I$ and from \eqref{zwei} one obtains the corresponding $t_1$, such that  $(r_1,t_1), (1-r_1,t_1)$ and $(r_0,t_0)$ are the only critical points of $F$.
However, we already know that here we have $4\beta K>2+e^\beta$ and hence $(r_0,t_0)$ is not a minimum of $F$. Moreover, minima at the boundary do not exist. But $F$ is continuous on $[0,1]^2$, therefore has a minimum, thus  the points $(r_1,t_1)$ and $(1-r_1,t_1)$ are global minima.

If, on the other hand $4\beta K=2+e^\beta$ and $e^\beta>4$, then $ \phi '(1/2)=h'(1/2)$ and of course $\phi''(1/2)=h''(1/2)=0$, however we still have $ \phi '''(1/2)>h'''(1/2)$, hence again $ \phi >h$ on $(1/2,1/2+\delta)$ for an appropriate $\delta>0$.
$\phi '''(1/2)>h'''(1/2)$ can be seen as follows: Write
\[
v(u):=\frac{8u^2+3e^\beta u+1}{(u+e^\beta u^2)^3}.
\]
Then $\phi''(r)=\beta K e^\beta (2r-1)v\circ w(r)$ and hence
\begin{equation}\label{abl}
\phi'''(r)=\beta K e^\beta(2v\circ w(r)-(2r-1)^2\frac{1}{2w(r)}v'\circ w(r)).\end{equation}
Thus
\[
\phi'''(1/2)=\frac{1}{2}(2+e^\beta)e^\beta v(1/2)
=48\frac{e^\beta}{2+e^\beta}.
\]
Due to $h'''(1/2)=32$ we have $\phi '''(1/2)>h'''(1/2)$ if and only if $e^\beta >4$.

Analogously to our arguments above we see that there is only one solution $r_1\in I$ of $\phi=h$, and again the corresponding $t_1$ can be computed from \eqref{zwei}. Indeed there is a local minimum of $F$ in $(r_1,t_1)$ and $(1-r_1,t_1)$. This can be seen by showing that the Hessian is positive definite. However, as this is not part of our assertion, we will refrain from doing so.

If, finally $4\beta K=2+e^\beta$ and $e^\beta\le4$, then $\phi'(1/2)=h'(1/2)$ and $\phi'''(1/2)\le h'''(1/2)$ and $\phi^{(5)}(1/2)<h^{(5)}(1/2)$, such that again $\phi<h$ on $(1/2,1/2+\delta)$ for an appropriate $\delta>0$.

For $\phi^{(5)}(1/2)<h^{(5)}(1/2)$ one argues: Because of \eqref{abl} we have
\begin{eqnarray*}
\phi^{(5)}(1/2)&=&\beta K e^\beta (2(v\circ w)''(1/2)-8v'(1/2))\\
&=&2\beta K e^\beta\Big(((v'\circ w)\cdot w')'(1/2)-4v'(1/2)\Big)\\
&=&2\beta K e^\beta\Big(-\frac{v'\circ w}{w}(1/2)-4v'(1/2)\Big)\\
&=&-12\beta K e^\beta v'(1/2)
\end{eqnarray*}
and
\begin{eqnarray*}
v'(1/2)&=&\frac{(8+3e^\beta)\frac{1}{4}(2+e^\beta)
-3(1+e^\beta)(3+\frac{3}{2}e^\beta)}{(\frac{1}{2}+\frac{1}{4}e^\beta)^4}\\
&=&-320\frac{2+3e^\beta}{(2+e^\beta)^3},
\end{eqnarray*}
thus
\[
\phi^{(5)}(1/2)=960e^\beta\frac{2+3e^\beta}{(2+e^\beta)^2}.
\]
Because of $h^{(5)}(1/2)=4!\cdot 2^6$ one has $\phi^{(5)}(1/2)<h^{(5)}(1/2)$ if and only if
$5e^\beta(2+3e^\beta)<8(2+e^\beta)^2$, thus $7e^{2\beta}-22e^\beta-32<0$ and this is true for all $0<e^\beta\le 4$.

The same is of course also true, when $4\beta K<2+e^\beta$, since then we already have $\phi'(1/2)<h'(1/2)$.

Summarizing we see that in all possible cases we have at most three local minima of $F$ and none at the boundary.
Of course, we could discuss how many minima there are exactly in certain cases. However, we will refrain from doing so, since this is not needed.

\end{enumerate}

\vspace{2mm}
 The second result we prove in this appendix is needed for the slow convergence case.
\begin{lemma}\label{N_is_unimodal}
There exists an $\epsilon_0>0$ such that for any $0<\epsilon\leq\epsilon_0$ on the set $$\mathcal{N}=\big\{\sigma\big||S_N(\sigma)|\leq N\cdot\epsilon\big\}$$ as defined in \eqref{N_central},
the free energy $f_\beta$ is unimodal for all $\beta$.
\end{lemma}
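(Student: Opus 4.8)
The plan is to reduce everything to the smooth function $f_\beta$ on the continuous simplex and to locate its critical points on the slab $\mathcal{N}_\epsilon:=\{\textbf{a}\in\Upsilon_\infty:|a_1-a_{-1}|\le\epsilon\}$. By the Stirling expansion \eqref{EnergyLandscape_Inducing-P_f}, where $|\Delta(\textbf{a})|=O(1)$, it is enough to show that for every $\beta$ and every sufficiently small $\epsilon>0$ the only local maximum of $f_\beta$ on $\mathcal{N}_\epsilon$ is $a_{\max}(0)$, with a definite gap between $f_\beta(a_{\max}(0))$ and $\sup_{\mathcal{N}_\epsilon\cap\{|a_1-a_{-1}|\ge\epsilon-1/N\}}f_\beta$; the $O(1)$ correction is then absorbed exactly as in the proof of Theorem \ref{Metropolis_Low_Conductance}.

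I would work in the coordinates $(r,t)$ of this appendix, equivalently in $q=a_{-1}+a_1$ and $m=a_1-a_{-1}$ (so $t=q$, $1-2r=m/q$, and $\mathcal{N}_\epsilon=\{|m|\le\epsilon\}$ is a genuine slab). The central point $(r_0,t_0)=(\tfrac12,\tfrac{2}{2+e^\beta})$, i.e. $a_{\max}(0)$, is always critical; any other critical point has $r\ne\tfrac12$ and must solve $h(r)=\phi(r)$ from \eqref{drei}. The analysis above shows $\phi'-h'$ changes sign at most twice on $(\tfrac12,1)$, hence by Rolle's theorem $h=\phi$ has at most two further roots there; together with Theorem \ref{Theorem_Holger} these are precisely the $r$-coordinates of $a_{\max}(\pm1)$. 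The critical-point equation in the $m$-variable, $4\beta K m=\log\frac{q+m}{q-m}$, moreover forces $q\ge\frac1{2\beta K}$ at such a point; but the decisive input is that in the first-order regime the equilibria $a_{\max}(\pm1)$ emerge \emph{discontinuously}, i.e. at $|m|$ bounded below by some $\delta_0>0$, and — using Appendix \ref{existenceKlow}, where $\Klow=\lim_{\beta\to\infty}K_c^{(1)}(\beta)$ is shown to exist, which controls the $\beta\to\infty$ asymptotics — this bound is uniform in $\beta$. Choosing $\epsilon<\delta_0$ leaves $a_{\max}(0)$ as the unique critical point of $f_\beta$ in $\mathcal{N}_\epsilon$.

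Finally I would upgrade "unique critical point" to unimodality. For each fixed $m$ with $|m|\le\epsilon$ the fiber $q\mapsto f_\beta(m,q)$ is strictly concave on $\{q>|m|\}$ (a direct computation of $\partial_q^2f_\beta$), so it has a unique maximizer $q^*(m)$, smooth in $m$ by the implicit function theorem; this reduces the problem to the even, smooth one-dimensional profile $g(m):=f_\beta(m,q^*(m))$, whose only critical point on $(-\epsilon,\epsilon)$ is $m=0$ by the previous step, hence which is unimodal there, and together with the fiberwise concavity this gives unimodality of $f_\beta$ on $\mathcal{N}_\epsilon$. The main obstacle is precisely the uniformity in $\beta$: as $\beta\to\infty$ the maximum $a_{\max}(0)$ drifts into the corner $(0,1,0)$ and the relevant scales shrink like $e^{-\beta}$, so one must verify both that $\delta_0$ survives this limit (this is what the $\Klow$ analysis supplies) and that the curvature $\partial_m^2f_\beta$ at $a_{\max}(0)$, equal to $2\beta K-\tfrac{2+e^\beta}{2}$, keeps $m=0$ a strict maximum of $g$; a short separate argument is needed for the bounded range of $\beta$ with $4\beta K>2+e^\beta$, where $a_{\max}(0)$ is a saddle of $f_\beta$, $g$ is monotone away from $m=0$, and one still obtains a single mode together with the gap required by Theorem \ref{Metropolis_Low_Conductance}.
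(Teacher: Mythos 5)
Your proposal takes a genuinely different and considerably more structured route than the paper's proof. The paper's argument is quite short: it computes the derivative of $f_\beta$ restricted to the centre line $a_{-1}=a_1$ (giving $g'(a)=2\beta+2\log\tfrac{a}{1-2a}$ up to a factor-of-two typo), observes that this is strictly monotone so there is exactly one mode on that line, and then asserts that "as $f_\beta$ is smooth this generalizes for all lines $a_1=a_{-1}+2\epsilon_0$ for sufficiently small $\epsilon_0$", invoking Theorem \ref{Theorem_Holger} to rule out boundary maxima. Your approach instead locates all critical points through the appendix's $h=\phi$ analysis, argues the off-centre ones sit at $|m|\ge\delta_0$ uniformly, then upgrades "unique critical point" to unimodality via strict concavity of the fibre $q\mapsto f_\beta(m,q)$ (which, by the way, you are right about: $\partial_q^2f_\beta=-\tfrac{q}{q^2-m^2}-\tfrac1{1-q}<0$ holds on the whole slab, not only near $m=0$) and the resulting even, one-dimensional profile $g(m)=f_\beta(m,q^*(m))$. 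This buys you a much cleaner reduction to a one-variable problem and it makes explicit the $\beta$-uniformity issue that the paper's "by smoothness" step slides past; in that sense your argument is more honest about where the difficulties lie.

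That said, the step you flag yourself as "the decisive input" is exactly where I see a genuine gap. Your uniform lower bound $\delta_0>0$ on $|m|$ at off-centre critical points is justified by the discontinuity of the first-order transition plus a reference to the existence of $\Klow=\lim_{\beta\to\infty}K_c^{(1)}(\beta)$. But the discontinuous bifurcation at $\beta_c^{(1)}(K)$ concerns the \emph{equilibrium} macro-states (global maximisers of $f_\beta$), not all critical points: metastable off-centre critical points can be born, for $\beta$ below $\beta_c^{(1)}(K)$ or in a separate range above it, near $m=0$. Concretely, linearising the condition $h(r)=\phi(r)$ at $r=\tfrac12$ shows critical points can approach $m=0$ precisely as $(\beta,K)$ approaches the curve $4\beta K=2+e^\beta$, i.e. $K=K_c^{(2)}(\beta)$; and since $K_c^{(2)}$ is decreasing at $\beta=\log 4$ one has $\min_\beta K_c^{(2)}(\beta)<K_c$, so for $K$ in the thin band $(\min_\beta K_c^{(2)},K_c)$ the line $K=\text{const}$ crosses the curve $4\beta K=2+e^\beta$ and $a_{\max}(0)$ passes through a genuine saddle phase with a local pitchfork-type structure nearby. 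For such $K$, your $\delta_0$ cannot be taken uniform over all $\beta$, and the $\Klow$ analysis in Appendix \ref{existenceKlow} (which is about the location of the transition curve $K_c^{(1)}(\beta)$, not about the $m$-coordinate of critical points) does not repair this. You would need a separate argument, or to restrict the range of $K$ to $(\Klow,\min_\beta K_c^{(2)}(\beta))$. In fairness the paper's own proof also glosses over this, but since your whole argument is built around the uniform $\delta_0$, the gap is load-bearing for you in a way it is not for the paper's more local perturbation argument.
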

\begin{proof}
The claim is true, if we find an $\epsilon_0>0$ such that $f_\beta((a_{-1},a_0,a_1))$ is unimodal on $|a_{-1}-a_1|<\epsilon_0$. Consider
\begin{align}
-f_\beta(a_1,a_0,a_1)&=2\beta a_1+2a_1 \log(a_1)+(1-2a_1)\log(1-2a_1)\\
-f_\beta(a_1,a_0,a_1)'&=2\beta-\log\left(\frac{1}{a_1}-2\right)
\end{align}
which tells us, that there is exactly one mode on the $a_1=a_{-1}, a_0=1-2a_{1}$ line. As $f_\beta$ is smooth this generalizes for all lines $a_1=a_{-1}+2\epsilon_0$ for sufficiently small $\epsilon_0$. This yields the desired result by using Theorem \ref{Theorem_Holger} as all that could happen, are maxima on the boundary.
\end{proof}


\vspace{0.3cm}
{\bf Acknowledgement}: We are very grateful to the anonymous referees for
 valuable comments, in particular to have informed us of the recent paper by Kovchegov, Otto and Titus \cite{Otto}.

\bibliographystyle{abbrv}
\bibliography{LiteraturDatenbank}
\end{document}